\def\blfootnote{\xdef\@thefnmark{}\@footnotetext}
\theoremstyle{plain}
\newtheorem{theorem}{Theorem}
\newtheorem{lemma}[theorem]{Lemma}
\newtheorem{proposition}[theorem]{Proposition}
\theoremstyle{definition}
\newtheorem{definition}[theorem]{Definition}
\newtheorem{remark}[theorem]{Remark}
\newcommand{\R}{\mathbb{R}}
\newcommand{\N}{\mathbb{N}}
\newcommand{\Z}{\mathbb{Z}}
\newcommand{\T}{\mathbb{T}}
\newcommand{\E}{\mathbb{E}}
\newcommand{\eps}{\varepsilon}
\newcommand{\Td}{\mathbb{T}^d}
\newcommand{\jbrac}[1]{\left\langle {#1}\right\rangle}
\newcommand{\norm}[1]{\left\Vert {#1}\right\Vert}
\DeclareFontFamily{U}{mathx}{\hyphenchar\font45}
\DeclareFontShape{U}{mathx}{m}{n}{
	<5> <6> <7> <8> <9> <10>
	<10.95> <12> <14.4> <17.28> <20.74> <24.88>
	mathx10
}{}
\DeclareSymbolFont{mathx}{U}{mathx}{m}{n}
\DeclareMathAccent{\widecheck}{0}{mathx}{"71}
\DeclareMathAccent{\wideparen}{0}{mathx}{"75}
\title{\Large{Quasi-invariance of fractional Gaussian fields\\ by the nonlinear wave equation with polynomial nonlinearity}
}
 \author{\normalsize{Philippe Sosoe}\\ \small{Cornell University} \and \normalsize{William J.~Trenberth} \\ \small{University of Edinburgh} \and \normalsize{Tianhao Xian} \\ \small{Cornell University}}
\begin{document}

\maketitle

\blfootnote{2010 \emph{Mathematics Subject Classification}. 35L71; 60H15.}

\abstract{We prove quasi-invariance of Gaussian measures $\mu_s$ with Cameron-Martin space $H^s$ under the flow of the defocusing nonlinear wave equation with polynomial nonlinearities of any order in dimension $d=2$ and sub-quintic nonlinearities in dimension $d=3$, for all $s>5/2$, including fractional $s$. This extends work of Oh-Tzvetkov and Gunaratnam-Oh-Tzvetkov-Weber who proved this result for a cubic nonlinearity and $s$ an even integer. The main contributions are a modified construction of a weighted measure  adapted to the higher order nonlinearity, and an energy estimate for the derivative of the energy replacing the integration by parts argument introduced in previous works.  We also address the question of (non) quasi-invariance for the dispersionless model raised in the introductions to \cite{OT, GOTW}.}

\section{Introduction}
\subsection{Statement of results}
We consider the nonlinear wave equation (NLW), on $\T^d$ for $d=2$ and $d=3$. This is the following equation for an unknown function $u:\mathbb{T}^d\times \mathbb{R}\rightarrow \mathbb{R}$:
\begin{equation}\label{eqn: pre-NLW}
\begin{cases}
\partial^2_tu-\Delta u+u^k=0\\
(u,\partial_tu)|_{t=0}=(u_0,v_0)
\end{cases}
\end{equation}
\noindent
where $k$ is a positive, odd integer. We rewrite \eqref{eqn: pre-NLW} as a first order system:
\begin{equation}\label{NLW}
\begin{cases}
\partial_t u=v\\
\partial_tv=\Delta u-u^k=0\\
(u,v)|_{t=0}=(u_0,v_0)
\end{cases}.
\end{equation}
\noindent
The system (\ref{NLW}) is a Hamiltonian system with Hamiltonian
\begin{equation}\label{eqn: H}
E(u,v):=\frac{1}{2}\int_{\Td}\left( |\nabla u|^2+v^2\right)\,\mathrm{d}x+\frac{1}{k+1}\int_{\Td}u^{k+1}\,\mathrm{d}x.
\end{equation}
The solutions of the system \eqref{NLW} we work with have initial data in the Sobolev space
\[\vec{H}^\sigma(\mathbb{T}^d):= H^\sigma(\mathbb{T}^d)\times H^{\sigma-1}(\mathbb{T}^d)\]
for $d=2,3$.
We consider the transport properties of the Gaussian measure on initial data $\vec{u}=(u,v)$ formally given by 
\begin{equation}\label{eqn: formal-meas}
\vec{\mu}_s(\mathrm{d}\vec{u})=Z_s^{-1}e^{-\frac{1}{2}\|(u,v)\|^2_{\vec{H}^{s+1}}}\mathrm{d}\vec{u}.
\end{equation}
The expression \eqref{eqn: formal-meas} is given meaning as a product measure on the Fourier coefficients of the pair $(u,v)$:
\begin{equation}
\vec{\mu}_s(\mathrm{d}u\mathrm{d}v)=Z_s^{-1}\prod_{n\in\mathbb{Z}^3}e^{-\frac{1}{2}\langle n\rangle^{2(s+1)}|\widehat{u}_n|^2}e^{-\frac{1}{2}\langle n\rangle^{2s}|\widehat{v}_n|^2}\,\mathrm{d}\widehat{u}_n\mathrm{d}\widehat{v}_n.
\end{equation}
Equivalently, $\mu_s$ is the distribution of the pair of function-valued random variables given by $\omega \mapsto (u^{\omega}, v^{\omega})$, where $u^{\omega}, v^{\omega}$ are the functions on $\mathbb{T}^d$ defined by
\begin{equation}\label{series}
\begin{split}
    u^{\omega}(x) &:= \sum_{n\in \mathbb{Z}^3}\dfrac{g_n(\omega)}{\langle n\rangle^{s+1}}e^{i n\cdot x},\\
 \quad v^{\omega}(x) &:=\sum_{n\in \mathbb{Z}^3}\dfrac{h_n(\omega)}{\langle n\rangle^s}e^{i n\cdot x}.
\end{split}
\end{equation}
Here, $g_n$, $h_n$, $n\in\mathbb{Z}^d$ are standard complex Gaussian random variables, independent except for the conditions:
\begin{equation}\label{eqn: g_n-conditions}
g_n=\bar{g}_{-n},\quad h_n=\bar{h}_{-n}, n\neq 0,
\end{equation}
and $g_0$, $h_0$ real-valued. By inspection of the series \eqref{series}, it is clear that $\mu_s$ is supported on $\vec{H}^{\sigma}$ for $\sigma < s-\frac{d-2}{2}$.

Let $s\ge 1$ and $d=2$ or $d=3$. The system \eqref{NLW} is said to be globally well-posed in $\vec{H}^s$ if, for every $(u,v)\in \vec{H}^s$, there is a solution $(u(t),v(t)=\partial_t u(t)):\mathbb{R}_+\rightarrow \vec{H}^s$ to the integral equation
\[
u(t)=\cos(t\sqrt{-\Delta})(u_0)+\frac{\sin(t\sqrt{-\Delta})}{\sqrt{-\Delta}}(v_0)-\int_0^t \frac{\sin((t-s)\sqrt{-\Delta})}{\sqrt{-\Delta}}u^{k}(s)\,\mathrm{d}s\]
which is unique in the class $C(\mathbb{R}_+;H^1(\mathbb{T}^d)\times L^2(\mathbb{T}^d))$. The Fourier multipliers $\cos(\sqrt{-\Delta})$ and $\frac{\sin(\sqrt{-\Delta})}{\sqrt{-\Delta}}$ are defined by
\begin{align*}
\mathcal{F}(\cos(\sqrt{-\Delta})u)(n)&=\cos(|n|)\widehat{u}(n),\\
\mathcal{F}\big(\frac{\sin(\sqrt{-\Delta}}{\sqrt{-\Delta}}u\big)(n)&=\frac{\sin(|n|)}{|n|}\widehat{u}(n),
\end{align*}
where for $u\in \mathcal{D}(\mathbb{T}^d)$, we use both $\mathcal{F}(u)(n)$ and $\widehat{u}(n)$ interchangeably to denote the $n$th Fourier coefficient:
\[\mathcal{F}(u)(n)=\widehat{u}(n)=\frac{1}{(2\pi)^d}\int_{\mathbb{T}^d} e^{-in\cdot x} u(x)\,\mathrm{d}x.\]
For $d=2$, the system is globally well-posed for any odd integer $k$, for a certain range of regularities. For $d=3$, global wellposedness is known for $k=3$ and $k=5$ for certain regularities. A pedagogical proof of well-posedness in the three-dimesnional case appears in \cite{tzvetkov-notes}.

\begin{theorem}\label{thm: quasi-invariance}
Let $s>\frac{5}{2}$. Let $k$ be an odd integer such that $3\le k <\infty$ if $d=2$ and $k=3$ or $k=5$ when $d=3$.

For any time $t>0$, the distribution of the solution $\Phi(t)(u,v)=(u(t),v(t))$ of the system \eqref{NLW} is absolutely continuous with respect to the distribution $\mu_s$ of the initial data, given by \eqref{series}.
\end{theorem}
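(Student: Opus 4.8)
The plan is to follow the by-now-standard strategy for proving quasi-invariance of Gaussian measures under dispersive PDE flows, originating in Oh--Tzvetkov and refined in Gunaratnam--Oh--Tzvetkov--Weber, but with two structural modifications forced by the higher-order nonlinearity $u^k$. First I would introduce a weighted measure. The naive candidate, the Gibbs-type measure $d\vec\rho_s \propto e^{-\frac{1}{2}\|\vec u\|_{\vec H^{s+1}}^2 - \frac{1}{k+1}\int_{\Td} :\!u^{k+1}\!:\,dx}\,d\vec u$ with a suitable renormalization (Wick ordering of the potential when $d=3$, $k=5$), is not integrable for general $s$ and $k$, since the potential $\int u^{k+1}$ is not defined on the support of $\mu_s$ for low $s$ and, more seriously, is unbounded below after renormalization is insufficient. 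So the modified construction would instead weight $\mu_s$ by $e^{-R_t(\vec u)}$ where $R_t$ is obtained from a truncated/smoothed version of the energy $E(u,v)$ (or rather the part of it that is \emph{not} already conserved by the linear flow), built so that (i) the weight is $\mu_s$-integrable uniformly in the frequency truncation parameter $N$, and (ii) $d\nu_{s,N} \propto e^{-R_N(\vec u)}\,d\mu_{s,N}$ is invariant, or almost invariant, under the truncated flow $\Phi_N(t)$. The frequency-truncated system $\Phi_N$ should be chosen (as in GOTW) so that it preserves the truncated weighted measure exactly; the main point is that on $\Td$, $d=2$ with arbitrary $k$ the needed Sobolev embeddings $H^{\sigma} \hookrightarrow L^{k+1}$ hold comfortably for $\sigma$ near $s > 5/2$, and similarly for $d=3$, $k\le 5$, so integrability is not the obstacle once the weight is set up correctly.

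Next I would carry out the \emph{energy estimate}, which is the genuinely new technical ingredient replacing the integration-by-parts step of the earlier papers. The object to control is $\frac{d}{dt} E(\Phi_N(t)\vec u)$ — or more precisely the time derivative of whatever functional $Q_N$ defines the weight — evaluated along the truncated flow and integrated against the weighted measure. Because $E$ itself is conserved by the \emph{full} flow, $\frac{d}{dt}E(\Phi_N(t)\vec u)$ is a commutator-type error coming only from the frequency truncation; schematically it is a multilinear expression of the form $\int_{\Td} (\text{low-high paired derivative terms})\, v\, u^{k-1}\,dx$ plus cross terms, and one needs a bound of the form $\big|\frac{d}{dt}E(\Phi_N(t)\vec u)\big| \lesssim 1 + E(\Phi_N(t)\vec u)$ with constants uniform in $N$, after taking expectation in $\vec u$ (using that along the flow the measure is the invariant weighted one, so $\vec u(t)$ has a controlled distribution). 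The earlier works exploited a miraculous cancellation specific to $k=3$ that allowed an integration by parts to gain a derivative; for general $k$ no such algebraic identity is available, so instead I would estimate the multilinear terms directly by Sobolev and Hölder inequalities, splitting into frequency-interaction cases (high-high-to-low, high-low-to-high, etc.), and crucially using that $s > 5/2$ gives enough room: the initial data lives in $\vec H^\sigma$ for $\sigma$ up to $s - \frac{d-2}{2}$, i.e.\ essentially $\sigma > 2$ in $d=3$ and $\sigma > 5/2$ in $d=2$, which lets one place the ``bad'' factor in $H^1$ or $L^\infty$ and the others in spaces where the product estimate closes with a constant polynomial in $\|\vec u\|_{\vec H^\sigma}$, and then absorb that polynomial using Gaussian integrability of $\|\vec u\|_{\vec H^\sigma}$ under the weighted measure. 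I expect this step — getting the energy-derivative estimate uniform in $N$ with only $s>5/2$ of regularity and for \emph{all} odd $k$ in $d=2$ — to be the main obstacle, since it is where the $k=3$-specific integration by parts has to be abandoned and replaced by a robust but more delicate multilinear analysis.

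Finally I would assemble these into quasi-invariance by the standard argument. Let $\vec\rho_{s,N}$ be the truncated weighted measure and $\vec\rho_s$ its $N\to\infty$ limit (convergence following from the uniform integrability of the weight together with convergence of the Wick-renormalized potential, or of the functional $Q_N$, in $L^p(\mu_s)$). For a measurable set $A$ and $t$ fixed, write $\vec\rho_{s,N}(\Phi_N(t)A)$ and differentiate in $t$: by invariance of $\vec\rho_{s,N}$ under $\Phi_N$ this derivative vanishes, so the change of $\mu_s(\Phi_N(t)A)$ in time is governed entirely by $\frac{d}{dt}$ of the \emph{Radon--Nikodym density} of $\vec\rho_{s,N}$ with respect to $\mu_{s,N}$ along the flow, which is controlled by the energy estimate above; this yields a Gr\"onwall-type bound $\mu_{s,N}(\Phi_N(t)A) \le C(t)\,\mu_{s,N}(A)^{\theta}$ for some $\theta\in(0,1)$, uniformly in $N$. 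One then passes to the limit $N\to\infty$ — using approximation of the true flow $\Phi(t)$ by the truncated flows $\Phi_N(t)$ in $\vec H^\sigma$ on the (full measure, by global well-posedness) set of initial data, together with convergence $\mu_{s,N}\to\mu_s$ — to obtain $\mu_s(\Phi(t)A) \le C(t)\,\mu_s(A)^{\theta}$, and hence $\mu_s(A)=0 \Rightarrow \mu_s(\Phi(t)A)=0$. Applying this with the roles of $A$ and $\Phi(t)A$ interchanged (using that $\Phi(-t)$ is also a well-defined flow, the equation being time-reversible) gives mutual absolute continuity, which in particular establishes the stated absolute continuity of the law of $\Phi(t)(u,v)$ with respect to $\mu_s$.
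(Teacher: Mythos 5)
There is a genuine gap in your proposal: the mechanism by which quasi-invariance is deduced from the energy estimate is described incorrectly. You write that the truncated flow $\Phi_N$ ``preserves the truncated weighted measure exactly,'' and later that ``by invariance of $\vec\rho_{s,N}$ under $\Phi_N$ this derivative vanishes.'' This is not how the argument works (in this paper or in the Oh--Tzvetkov and GOTW precursors), and if it were true the whole problem would collapse: if $\vec\rho_{s,N}(\Phi_N(t)A)=\vec\rho_{s,N}(A)$ one would be done as in the Gibbs-measure setting. What is actually invariant under the truncated dynamics is the finite-dimensional Lebesgue measure $L_N$ on the low frequencies (Liouville's theorem for the Hamiltonian ODE system) together with the Gaussian measure on the orthogonal high-frequency component, which the truncated flow does not touch. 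The change-of-variables formula \eqref{eqn: mu-COV} converts $\vec\rho_{s,N}(\Phi_N(t)A)$ into an integral over $A$ whose integrand carries the $\vec H^{s+1}$-type energy evaluated along the flow, and the whole point is that this weighted Sobolev energy $\|\Phi_N(t)(\pi_N u)\|^2$ is \emph{not} conserved — not by the truncated flow and not by the full flow. Relatedly, your claim that ``$E$ itself is conserved by the full flow, so $\frac{d}{dt}E(\Phi_N(t)\vec u)$ is a commutator-type error coming only from the frequency truncation'' conflates the conserved Hamiltonian $E(u,v)$ of \eqref{eqn: H} with the higher-order energy whose growth is being measured; the dangerous term $\int u_N^k D^{2s}v_N$ in \eqref{derivative-energy} is not a truncation artifact.

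Because the invariance premise is false, the structure of the weighted measure is also mislocated in your proposal. The correction is not ``a truncated/smoothed version of the energy''; it is read off from the computation \eqref{eqn: pre}--\eqref{eqn: commutator}: one identifies the total time derivative $\frac{k}{2}\partial_t\int (D^s u_N)^2 u_N^{k-1}$ hiding inside $\int u_N^k D^{2s}v_N$, moves it into the energy as $-\frac{k}{2}\int Q_{s,N}(u_N)u_N^{k-1}$ after Wick subtraction \eqref{eqn: Q-def}, and adds $\frac{1}{k+1}\int u_N^{k+1}$ so that what remains of $\partial_t\mathscr{E}_{s,N}$ is the genuinely lower-order expression \eqref{eqn: dt-energy}. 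Your proposal also omits the two innovations this paper introduces to handle higher $k$ and fractional $s$: the extra factor $e^{-E_N^q(u,v)}$ in the weight, where $q$ is a large power of the conserved Hamiltonian, needed to keep the Boué--Dupuis / Barashkov--Gubinelli variational bound coercive when $k>3$; and the replacement of the integration-by-parts cancellation (unavailable for non-integer $s$) by the paraproduct commutator expansion of Lemma \ref{lemma:decomposition_lemma}, which extracts the leading symbol terms $F_1,F_2$ and bounds the remainder. You do correctly anticipate the final PTV-style Gr\"onwall argument and the soft approximation $N\to\infty$, but these rest on the change-of-variables plus energy-derivative estimate, not on invariance of $\vec\rho_{s,N}$.
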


The special case of Theorem \ref{thm: quasi-invariance} for $d=2$, $k=3$ and $s$ an even integer was proved by Oh and Tzvetkov in \cite{OT}. This work first introduced the renormalized weighted measure to improve the energy estimate on the support of the Gaussian measure. Gunaratnam, Oh, Tzvetkov and Weber \cite{GOTW} extended the result to dimension 3 ($k=3$, $s\ge 4$). Their proof uses the recent variational formulation of partition function introduced by Barashkov and Gubinelli for the purpose of renormalization of $\phi^4$ field theories, combined with a recent argument of Planchon, Tzvetkov and Visciglia \cite{PTV} for proving quasi-invariance in ``local'' situations, exploiting deterministic growth bounds on the solution.

The somewhat surprising aspect of \cite{GOTW} is that although the weighted measure involves the quartic quantity
\[\int (D^s u_N)^2 u_N^2,\] no renormalization other than the Wick type subtraction \eqref{eqn: Q-def} is necessary, in contrast to the $\phi^4$ model in dimension 3.

Our estimates also yield a result in the spirit of \cite[Theorem 1.5]{PTV} concerning the transport of bounded subsets of $\vec{H}^{\sigma}$ in settings where global well-posedness is not known.
\begin{theorem}\label{thm: bound}
  Let $s>5/2$, and $\sigma<s-\frac{1}{2}$ be sufficiently close to $s-\frac{1}{2}$ for $d=2$, and $s>3$, $\frac{3}{2}<\sigma<s-\frac{1}{2}$ sufficiently close to $s-\frac{1}{2}$ for $d=3$.

For each $R>0$, let
\[B_R(0;\vec{H}^\sigma):=\{(u,v):\|(u,v)\|_{\vec{H}^\sigma}<R\}\]
denote the ball of radius $R$ centered at the origin in $\vec{H}^\sigma$. There exists $T=T(d,R)$ and $C(R)>0$ such that for $(u_0,v_0)\in B_R(0;\vec{H}^\sigma)$,
there is a solution $(u(t),v(t))$ of \eqref{NLW} such that
\begin{equation}\label{eqn: det-bound}
  \sup_{[-t,t]\le T}\|(u,v)(t)\|_{\vec{H}^\sigma}<C(R).
\end{equation}

Let $A$ be a Borel subset of $\vec{H}^{\sigma}$ such that
\[A\subset \{u\in \vec{H}^\sigma: \|u\|_{\vec{H}^\sigma}<R\}\]
and $\vec{\mu}_s(A)=0$. Then
\[\vec{\mu}_s(\{u(t): u(t) \text{ solution of  with initial data } u_0\in A\})=0\]
for all $t\in [-T,T]$.
\end{theorem}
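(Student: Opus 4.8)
The plan is to follow the scheme of \cite[Theorem~1.5]{PTV}, combining an unconditional deterministic local theory at the Sobolev regularity $\sigma$ with our energy estimate for the time derivative of the modified energy and the renormalized weighted measures constructed in the body of the paper, together with a limiting argument in the truncation parameter. For the first assertion, note that in both cases $\sigma>d/2$ (since $\sigma$ is close to $s-\tfrac12>2$ when $d=2$, and $\sigma>\tfrac32$ by hypothesis when $d=3$), so $H^\sigma(\T^d)$ is a Banach algebra and $\|u^k\|_{H^{\sigma-1}}\le\|u^k\|_{H^\sigma}\lesssim\|u\|_{H^\sigma}^k$. Since $\cos(t\sqrt{-\Delta})$ and $\tfrac{\sin(t\sqrt{-\Delta})}{\sqrt{-\Delta}}$ are bounded on $\vec H^\sigma$ uniformly for $t$ in compact intervals, a contraction-mapping argument for the Duhamel formulation in $C([-T,T];\vec H^\sigma)$ yields, for each $R$, a time $T=T(d,R)>0$ and a unique solution with $\sup_{|t|\le T}\|(u,v)(t)\|_{\vec H^\sigma}\le 2R=:C(R)$, which is \eqref{eqn: det-bound}. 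The same argument, with constants uniform in $N$, applies to the truncated equations with nonlinearity $\Plow\big((\Plow u)^k\big)$; their flows $\Phi_N$ are globally defined since on the low modes the defocusing truncated Hamiltonian controls $\|(\Plow u,\Plow v)\|_{\vec H^1}$ while the high modes evolve freely, and the usual stability estimate gives $\Phi_N(t)\vec u_0\to\Phi(t)\vec u_0$ in $\vec H^\sigma$, uniformly for $\vec u_0$ bounded and $|t|\le T$.

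For the second assertion I would first reduce, using reversibility of \eqref{NLW} (so $\Phi(-t)=S\Phi(t)S$ with $S(u,v)=(u,-v)$ and $S_*\vec\mu_s=\vec\mu_s$) together with elementary measure theory, to the statement that for $|t|\le T$ the pushforward $(\Phi(t))_*(\vec\mu_s|_B)$ is absolutely continuous with respect to $\vec\mu_s$, where $B$ is a fixed ball containing all the trajectories under consideration. Introduce the renormalized weighted measures $\vec\rho_{s,N}$ and the modified energy $\mathcal E_N$ from the body of the paper; since $d\vec\rho_{s,N}/d\vec\mu_s$ converges in every $L^p(\vec\mu_s)$ to a strictly positive density, $\vec\rho_s:=\lim_N\vec\rho_{s,N}$ and $\vec\mu_s$ are mutually absolutely continuous. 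By Liouville's theorem for the finite-dimensional low-frequency Hamiltonian flow and invariance of the high-frequency Gaussian under the free evolution,
\[
\frac{d(\Phi_N(t))_*\vec\rho_{s,N}}{d\vec\rho_{s,N}}(\vec u)=\exp\!\Big(\mathcal E_N(\vec u)-\mathcal E_N(\Phi_N(-t)\vec u)\Big)=\exp\!\Big(\int_{-t}^{0}\tfrac{d}{d\tau}\mathcal E_N(\Phi_N(\tau)\vec u)\,d\tau\Big).
\]

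On $B$, where Step~1 keeps $\|\Phi_N(\tau)\vec u\|_{\vec H^\sigma}$ bounded by a constant depending only on $R$ for $|\tau|\le T$, our energy estimate bounds the integrand sublinearly in $\mathcal E_N(\Phi_N(\tau)\vec u)$, so Gr\"onwall gives $\mathcal E_N(\Phi_N(\tau)\vec u)\le C(R,T)(1+\mathcal E_N(\vec u))$ for $|\tau|\le T$. Expanding $\int\big(\tfrac{d(\Phi_N(t))_*(\vec\rho_{s,N}|_B)}{d\vec\rho_{s,N}}\big)^p d\vec\rho_{s,N}$, changing variables by $\Phi_N(t)$, and inserting this bound shows that, for $p$ sufficiently close to $1$, the density $\tfrac{d(\Phi_N(t))_*(\vec\rho_{s,N}|_B)}{d\vec\rho_{s,N}}$ is bounded in $L^p(\vec\rho_{s,N})$ uniformly in $N$ and in $|t|\le T$. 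Since $\Phi_N(t)\to\Phi(t)$ uniformly on $B\times[-T,T]$ and $\vec\rho_{s,N}\to\vec\rho_s$ in total variation, $(\Phi_N(t))_*(\vec\rho_{s,N}|_B)$ converges weakly to $(\Phi(t))_*(\vec\rho_s|_B)$, and by a standard lower-semicontinuity argument the uniform $L^p$ bound passes to the limit; hence $(\Phi(t))_*(\vec\rho_s|_B)$ has an $L^p(\vec\rho_s)$ density, so it is absolutely continuous with respect to $\vec\rho_s$, and therefore with respect to $\vec\mu_s$. Since $\vec\mu_s|_B\ll\vec\rho_s$ and pushforward preserves absolute continuity, $(\Phi(t))_*(\vec\mu_s|_B)\ll\vec\mu_s$, completing the reduction.

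I expect the main obstacle to be the previous paragraph: reconciling the Liouville/change-of-variables computation with the weighted measure, so that $\Phi_N$ genuinely transforms $\vec\rho_{s,N}$ with density $\exp(\mathcal E_N(\vec u)-\mathcal E_N(\Phi_N(-t)\vec u))$ once the correction terms in $\mathcal E_N$ are accounted for, and then pushing the Gr\"onwall and $L^p$ estimates through with all constants uniform in $N$ and dependent only on $R$ and $T$. Everything else is soft; the genuinely hard analytic input---the energy estimate for $\tfrac{d}{dt}\mathcal E_N$---is established elsewhere in the paper and is taken here as a black box.
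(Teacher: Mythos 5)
Your Step~1 (deterministic local well‐posedness in $\vec H^\sigma$ via the algebra property, uniform bounds for the truncated flows $\Phi_N$, and convergence $\Phi_N\to\Phi$) is fine and is essentially what the paper uses for \eqref{eqn: det-bound}. Your structural reductions (work with $\vec\rho_{s,N}$ in place of $\vec\mu_s$, change of variables via Liouville plus Gaussian invariance of high modes, pass to the limit in $N$) are also the right skeleton.

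The gap is the Gr\"onwall step, and it is fatal as written. You claim ``our energy estimate bounds the integrand sublinearly in $\mathscr{E}_N(\Phi_N(\tau)\vec u)$, so Gr\"onwall gives $\mathscr{E}_N(\Phi_N(\tau)\vec u)\le C(R,T)(1+\mathscr{E}_N(\vec u))$.'' But the energy estimate (Proposition~\ref{theorem:energy_estimate}) is an $L^p(\vec\rho_s)$ bound on $\partial_t\mathscr{E}_{s,N}$, with the crucial linear growth in $p$; it is not a pathwise bound. The pathwise inequality that is available (Proposition~\ref{theorem:energy_derivative}) controls $|\partial_t\mathscr{E}_{s,N}|$ by $\|\vec u_N\|_{\vec H^\sigma}^{k-1}$ times stochastic H\"older norms such as $\|Q_{s,N}(u_N)\|_{C^{-1-\varepsilon}}$, $\|u_N\|_{C^{s-1/2-\varepsilon}}\|v_N\|_{C^{s-3/2-\varepsilon}}$, and the paraproduct terms. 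None of these quantities is dominated by $\mathscr{E}_{s,N}$ itself; in fact $\mathscr{E}_{s,N}$ has no sign (it contains the Wick term $-\tfrac{k}{2}\int Q_{s,N}(u_N)u_N^{k-1}$), so a differential inequality of the form $|\tfrac{d}{d\tau}\mathscr{E}_{s,N}|\lesssim 1+\mathscr{E}_{s,N}$ along a fixed trajectory is simply not available. Consequently the claimed pathwise bound $\mathscr{E}_N(\Phi_N(\tau)\vec u)\le C(R,T)(1+\mathscr{E}_N(\vec u))$, hence the uniform $L^p(\vec\rho_{s,N})$ bound on the Radon--Nikodym density, does not follow from anything proved in the paper. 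This is precisely the obstruction that forces the Yudovich‐style argument used there: one differentiates $\vec\rho_{s,N}(\Phi_N(t)(A))$ under the change of variables formula, applies H\"older with the $L^p$ estimate to get $\tfrac{d}{dt}\vec\rho_{s,N}(\Phi_N(t)(A))\le C_R\,p\,\vec\rho_{s,N}(\Phi_N(t)(A))^{1-1/p}$, integrates to $\vec\rho_{s,N}(\Phi_N(t)(A))\le(\vec\rho_{s,N}(A)^{1/p}+C_Rt)^p$, and then sends $p\to\infty$ for $t$ small. The linear dependence on $p$ in the energy estimate is what makes this work, and the argument never produces (nor needs) an $L^p$ density; it only yields smallness of the pushforward of small sets, which suffices for absolute continuity after the soft limiting step. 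To repair your proof you should replace the Gr\"onwall plus $L^p$-density paragraph by this measure‐of‐sets argument; the rest can stay.
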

We remark that the assumptions on $s$ and $\sigma$ in the statement of this result are not optimal. For example, for $d=3$, the restrictions on $\sigma$ imply that $\vec{H}^\sigma$ is an algebra, so that the nonlinearity can be treated by the basic tame estimate
\[\|u^k\|_{H^{\sigma-1}(\mathbb{T})}\lesssim \|u\|_{H^{\sigma-1}(\mathbb{T}^3)}\|u\|^{k-1}_{L^\infty(\mathbb{T}^3)},\] 
and so the existence is classical in this case. It is well known that a basic short-time well-posedness result can be proved in the range $s> \frac{3}{2}-\frac{1}{k-1}$ using Strichartz estimates. Since our methods do not reach the optimal range of $s$ in the energy estimate, we do not attempt to optimize $s$ in Theorem \ref{thm: bound}. As remarked in \cite{OT}, \cite{GOTW} it is of interest to consider quasi-invariance in low regularity settings.

Our final result concerns the necessity of the dispersion for the results in Theorem \ref{thm: quasi-invariance} and Theorem \ref{thm: bound} to hold. Omitting the Laplacian term in \eqref{NLW} leads, for any initial data $(u_0,v_0)$, to an ordinary differential system whose solution exists globally thanks to the Hamiltonian structure. Moreover, considering the $x$ dependence, the algebra property of the Sobolev space $\vec{H}^\sigma$ implies that the solution remains regular for all positive times if $\sigma>3$. The next result shows that, unlike preservation of Sobolev regularity, the absolute continuity statements in the two previous results depend crucially on the dispersion.
\begin{theorem}\label{thm: dispersionless}
For $t>0$, let $(\tilde{u}(t),\tilde{v}(t))$ be the solution at time $t$ of the system \eqref{eqn: ODE} with initial data $(u(0),v(0))=(u^\omega,v^\omega)$ distributed according to the random series \eqref{series}. Then $(\tilde{u}(t),\tilde{v}(t))$ is not absolutely continuous with respect to $(u(0),v(0))$.
\end{theorem}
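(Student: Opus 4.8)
The plan is to exploit the fact that, with the Laplacian removed, the flow of \eqref{eqn: ODE} acts \emph{pointwise in $x$}. Let $\phi_t=(\phi_t^{(1)},\phi_t^{(2)})\colon\R^2\to\R^2$ be the time-$t$ map of the planar Hamiltonian system $\dot a=b$, $\dot b=-a^k$; it is globally defined and real-analytic in $(a,b)$ because $\tfrac12 b^2+\tfrac1{k+1}a^{k+1}$ is a nonnegative conserved quantity, and then $(\tilde u(t),\tilde v(t))=\phi_t\circ(u^\omega,v^\omega)$, so that $\mathrm{Law}(\tilde u(t),\tilde v(t))$ is the pushforward $(\phi_t)_\ast\vec\mu_s$ of $\vec\mu_s$ under $\vec u_0\mapsto\phi_t\circ\vec u_0$. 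The driving mechanism is a one-derivative regularity mismatch built into $\vec\mu_s$: by \eqref{series}, $\vec\mu_s$-a.s.\ one has $u^\omega\in H^{a}(\Td)$ for every $a<s+1-\tfrac d2$, but $v^\omega\in H^{a}(\Td)$ only for $a<s-\tfrac d2$. For the genuine NLW the term $\tfrac{\sin(t\sqrt{-\Delta})}{\sqrt{-\Delta}}v_0$ in Duhamel's formula gains exactly one derivative and exactly offsets this gap; for \eqref{eqn: ODE} the datum $v^\omega$ enters $\tilde u(t)$ only through the order-zero smooth coefficient $\partial_b\phi_t^{(1)}(u^\omega,v^\omega)$, with nothing to compensate, so $\tilde u(t)$ should be genuinely as rough as $v^\omega$. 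Accordingly, fix a regularity $a$ with $s-\tfrac d2<a<s+1-\tfrac d2$ and $a>\tfrac d2$ (such $a$ exists under the hypotheses of Theorem~\ref{thm: quasi-invariance}; for $d=3$ one works, as in Theorem~\ref{thm: bound}, with $s>3$, so that the data are continuous and $\phi_t\circ\vec u^\omega$ is a well-defined element of the relevant $\vec H^\sigma$). Work in the ambient space $\vec H^{\sigma_0}$ with $\sigma_0<s-\tfrac d2$, on which both $\vec\mu_s$ and $(\phi_t)_\ast\vec\mu_s$ are Borel probability measures, and set $B:=\{(w_1,w_2)\in\vec H^{\sigma_0}: w_1\notin H^a(\Td)\}$, a Borel set because $w_1\mapsto\|w_1\|_{H^a}\in[0,\infty]$ is lower semicontinuous on $\vec H^{\sigma_0}$. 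Since $u^\omega\in H^a$ a.s., $\vec\mu_s(B)=0$; it thus suffices to prove $(\phi_t)_\ast\vec\mu_s(B)=\Prob[\tilde u(t)\notin H^a(\Td)]>0$, which contradicts $(\phi_t)_\ast\vec\mu_s\ll\vec\mu_s$.

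The probabilistic input is: \emph{for each fixed ball $B_0\subset\Td$, $v^\omega|_{B_0}\notin H^a(B_0)$ almost surely}. If $v^\omega|_{B_0}\in H^a(B_0)$ then $\psi v^\omega\in H^a(\Td)$ for any cutoff $\psi\in C_c^\infty$ supported in $B_0$; but $\psi v^\omega$ is a Gaussian random element of $H^{\sigma_0}(\Td)$, so by Fernique's theorem together with the zero--one law for Gaussian measures of measurable linear subspaces, $\{\psi v^\omega\in H^a(\Td)\}$ has probability $0$ or $1$, and it has probability $1$ only if $\E\|\psi v^\omega\|_{H^a}^2<\infty$. A direct computation gives $\E|\widehat{\psi v^\omega}(m)|^2\asymp\|\psi\|_{L^2}^2\langle m\rangle^{-2s}$, so this expectation is infinite whenever $a\ge s-\tfrac d2$. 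Hence, almost surely, $v^\omega|_{B_0}\notin H^a(B_0)$ simultaneously for all $B_0$ in a fixed countable basis of balls. (We also note $u^\omega\in H^a_{\mathrm{loc}}(\Td)$ a.s., since $u^\omega\in H^a(\Td)$ a.s.)

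Next comes the non-degeneracy that lets one detect $v^\omega$'s roughness inside $\tilde u(t)$. Differentiating the flow, $\partial_b\phi_t^{(1)}(a_0,b_0)=M_{12}(t)$, where $M_{12}$ solves the variational equation $\ddot M_{12}=-k\,a(t)^{k-1}M_{12}$ along the orbit through $(a_0,b_0)$ with $M_{12}(0)=0,\ \dot M_{12}(0)=1$; at the fixed point $(0,0)$, where $a(t)\equiv0$, this gives $M_{12}(t)=t\ne0$ for $t>0$. Thus $\partial_b\phi_t^{(1)}$ is real-analytic on $\R^2$ and not identically zero, so its zero set $Z$ is Lebesgue-null; since $\vec u^\omega(x_0)$ is a nondegenerate Gaussian vector in $\R^2$, $\Prob[\vec u^\omega(x_0)\in Z]=0$, and consequently the continuous function $g^\omega:=\partial_b\phi_t^{(1)}(u^\omega,v^\omega)$ is a.s.\ not identically zero. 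Therefore, a.s., $g^\omega\ge\delta>0$ on some ball, and we may arrange that the ball and the rational $\delta$ range over fixed countable families.

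Finally, on this almost sure event fix such a ball $B''$ with $g^\omega\ge\delta$ on $B''$. Since $\partial_b\phi_t^{(1)}(u^\omega(x),v^\omega(x))=g^\omega(x)\ge\delta$ there, $b\mapsto\phi_t^{(1)}(u^\omega(x),b)$ is a local diffeomorphism near $b=v^\omega(x)$; shrinking $B''$ so that $\{(u^\omega(x),\tilde u(x,t)):x\in B''\}$ stays in the domain of a single inverse branch, one obtains a fixed $C^\infty$ map $\Psi$ with $v^\omega(x)=\Psi(u^\omega(x),\tilde u(x,t))$ for $x\in B''$. If $\tilde u(t)\in H^a(\Td)$, then $\tilde u(t)|_{B''}\in H^a(B'')$ and $u^\omega|_{B''}\in H^a(B'')$, and since $a>\tfrac d2$ the space $H^a(B'')$ is stable under composition with the smooth map $\Psi$, so $v^\omega|_{B''}=\Psi(u^\omega|_{B''},\tilde u(t)|_{B''})\in H^a(B'')$ --- contradicting the second paragraph. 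Hence $\tilde u(t)\notin H^a(\Td)$ a.s., so $(\phi_t)_\ast\vec\mu_s(B)=1>0=\vec\mu_s(B)$, and the law of $(\tilde u(t),\tilde v(t))$ fails to be absolutely continuous with respect to $\vec\mu_s$ (indeed the two measures are mutually singular). The step I expect to be most delicate is promoting ``$v^\omega\notin H^a(\Td)$'' to the uniform local statement in the second paragraph and carrying out the inverse-function bookkeeping in the last paragraph on a ball where $g^\omega$ is bounded below; the routine but necessary checks that all pointwise operations with $\phi_t\circ\vec u^\omega$ and the composition estimate for $\Psi$ are legitimate are what force the mild extra assumption $s>3$ when $d=3$.
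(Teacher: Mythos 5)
Your argument is correct and takes a genuinely different route from the paper. The paper proves Theorem~\ref{thm: dispersionless} by combining the law of the iterated logarithm for the fields $u^\omega,v^\omega$ at a fixed point with conservation of the pointwise Hamiltonian $H(u,v)=\tfrac12 v^2+\tfrac1{k+1}u^{k+1}$: differentiating this identity $r=\lfloor s-\tfrac12\rfloor$ times and comparing leading LIL-singular terms forces $v(0,x)=v(t,x)$, then $u(t,x)=\pm u(0,x)$, a.s., so $t$ must be a period (or half-period type) quantity $nT\pm\Delta$; since $T,\Delta$ have continuous laws this event is null, giving the contradiction. Your proof instead isolates the \emph{one-derivative mismatch} between the two Gaussian marginals: $u^\omega\in H^a$ a.s.\ for $a<s+1-\tfrac d2$ while $v^\omega\notin H^a_{\mathrm{loc}}$ a.s.\ for $a\ge s-\tfrac d2$ (via a Fernique/Gaussian 0--1 law argument), and then shows that the pointwise flow $\phi_t$ transports the roughness of $v^\omega$ into the first component because the entry $\partial_b\phi_t^{(1)}$ of the monodromy matrix is a nonzero real-analytic function (equal to $t$ at the fixed point $(0,0)$), which supplies a local inverse branch $\Psi$ and a composition estimate in $H^a$, $a>\tfrac d2$, yielding $v^\omega|_{B''}=\Psi(u^\omega,\tilde u(t))|_{B''}\in H^a(B'')$ --- a contradiction. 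Both arguments are valid; yours has a few advantages worth noting. First, it yields the stronger conclusion that the two laws are \emph{mutually singular} (you exhibit a set $B$ with $\vec\mu_s(B)=0$ and $(\phi_t)_*\vec\mu_s(B)=1$), whereas the paper's argument rules out absolute continuity. Second, it directly exposes the dispersive mechanism --- the one derivative gained by $\sin(t\sqrt{-\Delta})/\sqrt{-\Delta}$ --- whose absence is responsible for the breakdown, rather than relying on the fine pointwise modulus of continuity from the LIL. On the other hand, the paper's LIL-based argument is shorter once the LIL is taken as input, avoids the inverse-function and composition-of-Sobolev-functions bookkeeping, and does not need the extra threshold $a>\tfrac d2$ (which, as you note, is what pushes you to $s>3$ when $d=3$, whereas the paper's approach handles the full stated range). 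One small point of bookkeeping in your write-up: ``not identically zero'' for $g^\omega$ should be strengthened to ``$|g^\omega|\ge\delta$ on some ball'' (the sign of $\partial_b\phi_t^{(1)}$ is irrelevant for inverting), but this is cosmetic and does not affect the argument.
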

The analogue of Theorem \ref{thm: dispersionless} for a Schr\"odinger-type equation in dimension $d=1$ was proven by Oh, Tzvetkov, and the first author in \cite{OST}. The difference in the nonlinear wave case is that we do not have an explicit solution of the the ODE that appears when the Laplacian term $\Delta u$ is left out of \eqref{NLW}. The proof of Theorem \ref{thm: dispersionless} appears in Section \ref{sec: dispersionless}.

\subsection{Motivation and previous literature}
The current work is motivated by the results of Oh-Tzvetkov \cite{OT} and Gunaratnam-Oh-Tzvetkov-Weber \cite{GOTW} on the transport of Gaussian measures by the flow of the 2d, respectively 3d, cubic nonlinear wave equations. In particular, we address a number of questions mentioned in the introduction to these papers.

The study of the transport properties of Gaussian measures by Hamiltonian dispersive dynamics was recently initiated by N. Tzvetkov in \cite{T1}. The initial motivation in this work was the study of long term estimates in high Sobolev norms. Another motivating question is the existence of invariant measures supported on Sobolev spaces of regularity higher than the formal Gibbs measures.


The paper \cite{T1} follows a long line of work on the dynamics of Hamiltonian dispersive equations with random initial data. This goes back at least to the foundational paper \cite{LRS} by Lebowitz, Rose and Speer (LRS), which constructs measures absolutely continuous with respect to circular Brownian motion. These were expected to be invariant under the flow of the nonlinear Schr\"odinger equation on the torus. J. Bourgain used his $X^{s,b}$ spaces to construct dynamics on the support of these measures and gave a proof of invariance \cite{bourgain1}. Bourgain then applied his method to a number of other equations \cite{bourgain2, bourgain3, bourgain4}.

Quasi-invariance does not have the dynamical implications of invariance, but it is nonetheless a delicate property of the flow, implying for example the propagation of fine regularity properties of the initial data. It is a much stronger property than persistence of the Sobolev regularity implied by the usual well-posedness results. Indeed, it was noted in \cite{OST} that quasi-invariance implies propagation of the (a.s. constant) modulus of continuity of the Gaussian initial data at any point, and this fact was used to show that the dispersion in the nonlinear Schr\"odinger equation is essential for quasi-invariance to hold.

While several results after \cite{T1} used modified measures to obtain a favorable energy estimate, the paper \cite{OT} was the first to consider the addition to the energy of a formally ``infinite'' term. Namely, Oh-Tzvetkov modify the base measure by adding a term which does not converge as the Fourier cutoff is removed on the support of the base Gaussian measure, requiring a renormalization. The renormalization of the modified energy there is based on argument akin to Nelson's argument for the construction of $P(\phi)_2$ quantum fields \cite{N}.

 \paragraph{This paper's contributions.}
 The main differences with previous works, in particular \cite{GOTW}, are as follows:
\begin{enumerate}
\item The key energy estimates in \cite{OT, GOTW} depend on an initial integration by parts (see \cite[Equations (3.5)-(3.6)]{GOTW}). This  integration by parts removes the most singular term in the derivative of the energy.  It is also in this step of the argument that the correction needed to define the weighted measure is identified. 

As pointed out in \cite{GOTW}, when $s$ is not an integer, we cannot integrate by parts and obtain exact cancellation. The main tools here are paraproducts and an expansion of the relevant multiplier. Since we do not require $s$ to be an integer, we automatically lower the accessible regularities for the measure $\vec{\mu}_s$.

\item To construct the weighted measure $\vec{\rho}_s$, one needs uniform control of the partition function of the trunctated measures $\vec{\rho}_{s,N}$. As in \cite{GOTW}, we achieve this by pathwise bounds on the terms in a stochastic optimization problem involving a measure perturbed by a ``control drift''. In our case, the relevant expression involves higher powers of the drift. To ensure positivity, we must modify the weighted measure to incorporate a high power of the energy.
\end{enumerate}

\subsection{Outline of proof of Theorems \ref{thm: quasi-invariance} and \ref{thm: bound}}
Our proof proceeds along the lines of that in \cite{OT,GOTW}, following a general methodology introduced in \cite{T1}. Tzvetkov method is based on the construction of a measure $\vec{\rho}_s$ which is mutually absolutely continuous with respect to the Gaussian measure $\vec{\mu}_s$ of interest, but for which the time evolution of sets can be controlled effectively. Then, we need a suitable energy estimate on the support of the renormalized measure.

We start by replacing the measure $\mu_s$ by an equivalent measure more suitable to the analysis of the nonlinear wave flow.
\begin{definition}
Let $g_n$, $h_n$, $n\in\mathbb{Z}^3$ be standard complex Gaussian random variables satisfying \eqref{eqn: g_n-conditions} such that $g_0$ and $h_0$ are  real valued. Define $\mathrm{d}\vec{\nu}_s$ to be the distribution of the random series
  \begin{align}
    u^\omega(x) &= g_0+ \sum_{n\in \mathbb{Z}^3\setminus \{0\}}\frac{g_n(\omega)}{(|n|^2+|n|^{2s+2})^{\frac{1}{2}}}e^{i n\cdot x}, \label{eqn: useries}\\
    v^\omega(x)&= \sum_{n\in\mathbb{Z}^3}\frac{h_n(\omega)}{(1+|n|^{2s})^{\frac{1}{2}}}e^{in\cdot x}. \label{eqn: vseries}
    \end{align}
\end{definition}

For each $N\ge 1$,  let $\Phi_N(t)$ denote the time $t$ flow of the following approximation of the flow of the equation \eqref{NLW}:
\[\begin{cases}
    \partial_t u =v\\
    \partial_t v = \Delta u -\pi_N((\pi_N u)^k)\\
    (u,v)|_{t=0}= (u_0,v_0),
  \end{cases}\]
where by $\pi_N$ we denote the projection \eqref{eqn: dirichlet-proj} onto frequencies less than $N$.

A change of variables formula (see \cite[Proposition 4.1]{T1} or \cite[Lemma 5.1]{OT}) then implies
\begin{equation}\label{eqn: mu-COV}
\int_{\Phi_N(t)(A)}\vec{\nu}_s(\mathrm{d}\vec{u})=Z_{s,N}^{-1}\int_{A}e^{-\frac{1}{2}\|\Phi_N(t)(\pi_N u)\|^2}\mathrm{d} L_N\otimes \vec{\nu}_{s,N}^\perp(\mathrm{d}\vec{u}).
\end{equation}
In \eqref{eqn: mu-COV}, we have used the notation
\[\|(u,v)\|^2=\int_{\mathbb{T}^3}(D^s v)^2\,\mathrm{d}x+\int_{\mathbb{T}^3}(D^{s+1} u)^2\,\mathrm{d}x +\int_{\mathbb{T}^3} |\nabla u|^2\,\mathrm{d}x+\int_{\mathbb{T}^3} v^2\,\mathrm{d}x+\left(\int_{\mathbb{T}^3} u\,\mathrm{d}x\right)^2.\]
Here, $D^s u$ denotes\footnote{For $s<0$, we define $\widehat{D^su}(0)=0$.} the action on $u$ of the Fourier multiplier with symbol $|n|^s$:
\[D^s u:= |\nabla|^s u =\mathcal{F}^{-1}(|n|^s\widehat{u}).\]

The measure 
\[L_N\otimes \vec{\nu}_{s,N}^\perp(\mathrm{d}\vec{u})\]
appearing in \eqref{eqn: mu-COV} is the product of Lebesgue measure $L_N$ on
\[\mathcal{E}_N\times \mathcal{E}_N:=(\pi_N L^2(\mathbb{T}^3))^2\]
and the projection $\vec{\nu}_{s,N}^\perp=(\mathrm{id}-\pi_N)_*\vec{\nu}_{s,N}$ of $\vec{\nu}_{s,N}$ on
\[\mathcal{E}_N^\perp\times \mathcal{E}_N^\perp:=((\mathrm{id}-\pi_N)L^2(\mathbb{T}^3))^2.\]
$Z_{s,N}$ is a normalization factor.

Differentiating \eqref{eqn: mu-COV} and using the invariance of the Hamiltonian, we obtain
\[-Z_{s,N}^{-1}\int_{A}\frac{1}{2}\frac{\mathrm{d}}{\mathrm{d}t}\|\Phi_N(t)(\pi_N u)\|^2 e^{-\frac{1}{2}\|\Phi_N(t)(\pi_N u)\|^2}\mathrm{d} L_N\otimes \vec{\nu}_{s,N}^\perp(\mathrm{d}u).\]
Denote  by
\[u_N:= \pi_N u, \quad v_N:=\pi_N v\]
the projections of the solution on Fourier frequencies less than or equal to $N$. The derivative of the energy is
\begin{align}
  \frac{\mathrm{d}}{\mathrm{d}t}\|\Phi_N(t)(\pi_N u)\|^2&=\partial_t \left(\frac{1}{2}\int_{\mathbb{T}^3} (D^s v_N)^2+ (D^{s+1} u_N)^2+\frac{1}{2}\big(\int_{\mathbb{T}^3}u_N\big)^2\right)\\
&+\partial_t\left( E(u_N,v_N)-\frac{1}{k+1}\int_{\mathbb{T}^3} u_N^{k+1} \right)\nonumber\\
  &= \int_{\mathbb{T}^3} D^{2s}v_N (-u_N^k)+\int_{\mathbb{T}^3} u_N\int_{\mathbb{T}^3}v_N-\frac{\mathrm{d}}{\mathrm{d}t}\big(\frac{1}{k+1}\int_{\mathbb{T}^3} u_N^{k+1}\big). \label{derivative-energy}
\end{align}

We now rewrite the first quantity in \eqref{derivative-energy} as
\begin{align}
  \int u_N^k D^{2s}v_N =&~ k\int D^s v_N D^s u_N u_N^{k-1} \nonumber \\
                       &+\int D^s v_N (D^su_N^k-k u_N^{k-1} D^s u_N) \nonumber \\
                       =&~ \frac{k}{2} \partial_t \int (D^s u_N)^2 u_N^{k-1} - \frac{k(k-1)}{2}\int (D^s u_N)^2v_N u_N^{k-2} \label{eqn: pre}\\
                       &+\int D^s v_N (D^su_N^k-k u_N^{k-1} D^s u_N). \label{eqn: commutator}
\end{align}
The quantity $(D^s u_N)^2$ is divergent on the support of $\vec{\nu}_s$, and so requires a renormalization. This was one of the innovations in \cite{OT}. Following the notation introduced there, we define
\begin{equation}\label{eqn: Q-def}
  Q_{s,N}(f):=(D^s f)^2-\sigma_N,
  \end{equation}
with
\[\sigma_N := \mathbb{E}_{\vec{\nu}_s}[(D^s \pi_N u)^2]\sim N.\]

Defining the \emph{renormalized energy} by
\begin{equation}\label{eqn: R-energy}
  \mathscr{E}_{s,N}(u,v):= \frac{1}{2}\big(\int_{\mathbb{T}^3} (D^s v_N)^2+\int_{\mathbb{T}^3} (D^{s+1} u_N)^2+\big(\int_{\mathbb{T}^3} u_N\big)^2\big) -\frac{k}{2}\int_{\mathbb{T}^3} Q_{s,N}(u_N)u_N^{k-1},
\end{equation}
the result of the above computations is the following expression for the time derivative of $\mathscr{E}_{s,N}(u,v)$:
\begin{equation}
  \begin{split}\label{eqn: dt-energy}
    \partial_t \mathscr{E}_{s,N}(\pi_N\Phi_N(t)(u,v))=& -\frac{k(k-1)}{2}\int_{\mathbb{T}^2} Q_{s,N}(u_N)v_N u_N^{k-2}\\
    &+\int_{\mathbb{T}^2} D^s v_N (D^su_N^k-k u_N^{k-1} D^s u_N)+\int_{\mathbb{T}^3} u_Nv_N.
  \end{split}
\end{equation}
The idea in \cite{T1} is to pass from $\vec{\nu}_s$ to the measure
\[e^{-R_s(u,v)}\mathrm{d}\vec{\nu}_s,\]
where $R_s(u,v)$ is a limit of the terms
\begin{equation}\label{eqn: RskN}
  R_{k,s,N}:=\frac{k}{2}\int_{\mathbb{T}^3} Q_{s,N}(u_N)u_N^{k-1}+\frac{1}{k+1}\int_{\mathbb{T}^3} u_N^{k+1}
\end{equation}
appearing in the renormalized energy \eqref{eqn: R-energy}. We must then estimate the time derivative \eqref{eqn: dt-energy}.

The following two propositions contain the main technical results needed to close the argument.

\begin{proposition}\label{lem: Lp-bound}
  Let $s> \frac{5}{2}$. Define
\[E_N(u,v):=\frac{1}{2}\int |\nabla u_N|^2+\frac{1}{2}\int v_N^2+\frac{1}{k+1}\int u_N^{k+1} .\]
Then for $p<\infty$ there exists $q>0$ and $C_p>0$ such that 
\begin{equation*}
\sup\limits_{N \in\mathbb{N}} \norm{e^{-R_{k,s,N}(u)-E^q_N(u,v)}}_{L^p(\vec{\nu}_s)} \leq C_p<\infty\label{eqn: Lp-bound}
\end{equation*}
and moreover,
\begin{equation}\label{eqn: Lp-conv}
\lim\limits_{N\rightarrow\infty} e^{-R_{k,s,N}(u)-E^q_N(u,v)}= e^{F_{s,k}(u,v)}\quad in\,\, L^p(\vec{\nu}_s). 
\end{equation}
In particular, for any $\sigma<s-\frac{1}{2}$, the restrictions to $\vec{H}^\sigma$ of the measures
\[\mathrm{d}\vec{\rho}_{s,N}=\mathcal{Z}_{s,N}^{-1}e^{-R_{k,s,N}(\vec{u})-E^q}\,\vec{\nu}_s(\mathrm{d}\vec{u})\]
converge in total variation to a measure $\vec{\rho}_s$.
\end{proposition}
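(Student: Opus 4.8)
The strategy is the Boué--Dupuis variational formula (as used in \cite{GOTW}), which expresses the $L^p$-norm of the exponential as an expectation over controlled drifts. Writing the Gaussian field $u_N$ as a stochastic integral $Y_N(1)$ against a Brownian motion, one has
\[
\log \norm{e^{-R_{k,s,N}(u) - E_N^q(u,v)}}_{L^p(\vec{\nu}_s)}^p
= \sup_{\theta} \mathbb{E}\Big[ -p\, R_{k,s,N}(Y_N + I_N(\theta)) - p\, E_N^q(Y_N + I_N(\theta), v) - \tfrac12\int_0^1 \|\theta(t)\|^2\,\mathrm{d}t \Big],
\]
where $\theta$ ranges over progressively measurable drifts and $I_N(\theta)$ is the associated drift term, which lies in $H^{s+1}$ with $\|I_N(\theta)\|_{H^{s+1}}^2 \le \int_0^1\|\theta(t)\|^2\,\mathrm{d}t =: \|\theta\|_{\mathcal{H}}^2$. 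Set $Z := I_N(\theta)$; the plan is to show the bracketed quantity is bounded above uniformly in $N$ and in $\theta$, which gives the uniform $L^p$ bound, and then a routine difference estimate gives the $L^p$ convergence in \eqref{eqn: Lp-conv} and hence total-variation convergence of $\vec{\rho}_{s,N}$ on $\vec{H}^\sigma$.

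The main work is the deterministic estimate on the drift part together with stochastic estimates on the Wick-renormalized Gaussian pieces. Expanding $R_{k,s,N}(Y_N+Z)$ multinomially, the purely Gaussian term $\tfrac{k}{2}\int Q_{s,N}(Y_N)Y_N^{k-1} + \tfrac{1}{k+1}\int Y_N^{k+1}$ has moments bounded uniformly in $N$ by hypercontractivity on Wiener chaos (this is the content of the $P(\phi)_2$-type estimate, using $s>5/2$ so that $D^sY_N$ is a log-divergent field in $d=2$ whose Wick square is controllable; in $d=3$ the subquintic restriction enters here). The mixed terms contain factors of $Z$: the most dangerous is the one linear in $Z$ and highest-order in the singular field, namely $\int (D^s Y_N)(D^s Z)\, Y_N^{k-2}(Y_N+\cdots)$ and $\int \sigma_N Z Y_N^{k-2}$; these must be absorbed. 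I would control them by $\|Z\|_{H^{s+1}}$ paired against a negative-order Sobolev norm of the Wick product $\wick{(D^sY_N)Y_N^{k-2}}$, using Young's inequality to split off a small multiple of $\|Z\|_{H^{s+1}}^2 \le \|\theta\|_{\mathcal{H}}^2$, which is then defeated by the $-\tfrac12\|\theta\|_{\mathcal{H}}^2$ term in the variational formula. The remaining purely deterministic terms of the form $\int Z^{k+1}$ and $\int Z^{k-1}(D^sZ)^2$ are \emph{not} sign-definite and not obviously dominated by $\|Z\|_{H^{s+1}}^2$ when $k\ge 5$ — this is precisely why the coercive term $-E_N^q$ is inserted. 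Since $E_N(u,v) \ge \tfrac{1}{k+1}\int u_N^{k+1} \ge 0$ and controls $\|u_N\|_{L^{k+1}}^{k+1}$ and $\|u_N\|_{H^1}^2$, a high power $E_N^q$ with $q$ chosen large (depending on $k$) dominates, via interpolation between $H^1$ and $H^{s+1}$ and Sobolev embedding, all the positive-power-of-$Z$ contributions after paying a small amount of $\|\theta\|_{\mathcal{H}}^2$; one splits $Z = Y_N + Z - Y_N$ appropriately so that $E_N^q$ is evaluated at a quantity comparable to $Z$ modulo Gaussian terms with good moments.

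The key obstacle — and the genuinely new point relative to \cite{GOTW}, where $k=3$ makes $\int Z^4$ and $\int Z^2(D^sZ)^2$ directly bounded by $\|Z\|_{H^{s+1}}^4$ hence by the square of the entropy via Young — is showing that the single coercive insertion $E_N^q$ simultaneously tames \emph{all} the higher-degree drift monomials for arbitrary odd $k$ (in $d=2$) without introducing new divergences of its own, and that it does not spoil the uniform-in-$N$ moment bounds on the Gaussian side. Concretely, one must check that $\mathbb{E}[e^{-pR_{k,s,N}(Y_N) - pE_N^q(Y_N,v_N)}]$ is bounded uniformly in $N$: here $E_N^q$ only adds a favorable sign since $E_N \ge 0$, so the true issue is the cross terms, handled as above. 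For the convergence statement, I would run the same variational bound on the difference $R_{k,s,N} - R_{k,s,M}$ (and $E_N^q - E_M^q$), showing it tends to $0$ as $N,M\to\infty$ using that $Q_{s,N}(Y_N) \to$ a limiting Wick square in every negative Sobolev space and $L^p(\Omega)$; standard arguments (e.g. as in \cite[Section 4]{GOTW}) then upgrade $L^p(\vec{\nu}_s)$ convergence of the densities to total-variation convergence of the restricted measures on $\vec{H}^\sigma$, once one knows $\vec{\mu}_s(\vec{H}^\sigma)=1$ for $\sigma<s-\tfrac12$ (clear in $d=2$; for $d=3$ one uses $\sigma<s-\tfrac12 < s - \tfrac{d-2}{2}$ only marginally, which is why $\sigma$ is taken close to $s-\tfrac12$).
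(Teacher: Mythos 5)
Your proposal follows essentially the same route as the paper: the Bou\'e--Dupuis/Barashkov--Gubinelli variational formula, hypercontractive moment bounds on the Gaussian pieces, Young's inequality absorbing the mixed Gaussian--drift terms against $\tfrac12\int_0^1\|\theta\|_{L^2_x}^2\,\mathrm{d}t$ and the coercive $E_N^q$ insertion (with $q$ large depending on $k$), and a separate $L^p(\vec{\nu}_s)$-convergence argument for the densities upgraded to total-variation convergence. One small clarification: since $k$ is odd, the pure-drift terms $\int \Theta^{k+1}$ and $\int (D^s\Theta)^2\Theta^{k-1}$ are in fact nonnegative (not ``not sign-definite'' as you write); the paper exploits precisely this positivity, together with $E_N^q(\vec{I}^\theta)$ extracted from $E_N^q(\vec{Y}^\theta+\vec{I}^\theta)$ by Young's inequality, as the reservoir of good terms that defeats the cross terms for general odd $k$, whereas $E_N^q$ is not needed to tame these drift monomials themselves.
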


\begin{proposition}\label{theorem:energy_estimate}
	Let $s>\frac{5}{2}$. Given $R_0>0$ there exists $C=C(R_0)$ such that, for all $p\ge 1$ finite, we have
	\begin{equation}\label{eqn: energy-estimate}
		\mathbb{E}_{\vec{\rho}_s}[\mathbf{1}_{B_{R_0}(\vec{u})}\left|\partial_t \mathscr{E}_{s,N}(\pi_N\Phi_N(t)(\vec{u}))|_{t=0}\right|^p]^{\frac{1}{p}}\leq Cp
	\end{equation}
	where $C$ can be taken independent of $N$.
\end{proposition}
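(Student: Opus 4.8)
The statement concerns only $t=0$, where $\pi_N\Phi_N(0)(\vec{u})=(u_N,v_N):=(\pi_N u,\pi_N v)$, so $\partial_t\mathscr{E}_{s,N}|_{t=0}$ is just the explicit polynomial expression \eqref{eqn: dt-energy} evaluated on the truncated random series \eqref{eqn: useries}--\eqref{eqn: vseries}; no analysis of the flow is needed. Write $\partial_t\mathscr{E}_{s,N}|_{t=0}=T_1+T_2+T_3$ with
\[T_1:=-\tfrac{k(k-1)}{2}\int Q_{s,N}(u_N)\,v_N\,u_N^{k-2},\qquad T_3:=\int u_N v_N,\]
\[T_2:=\int D^sv_N\,G_N,\qquad G_N:=D^s(u_N^k)-k\,u_N^{k-1}D^s u_N.\]
The plan is first to replace $\vec{\rho}_s$ by the Gaussian $\vec{\nu}_s$: by Proposition \ref{lem: Lp-bound} the density $\mathrm{d}\vec{\rho}_s/\mathrm{d}\vec{\nu}_s$ lies in every $L^q(\vec{\nu}_s)$ with a constant independent of $N$, so H\"older's inequality reduces \eqref{eqn: energy-estimate} to the bound $\|\mathbf{1}_{B_{R_0}}\,T_i\|_{L^{2p}(\vec{\nu}_s)}\lesssim_{R_0}p$, uniformly in $N$, for $i=1,2,3$. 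Here $B_{R_0}$ is the ball of radius $R_0$ in $\vec{H}^\sigma$ with $\sigma$ close to $s-\tfrac12$, which is admissible in both dimensions since $\vec{\nu}_s$ is supported on $\vec{H}^\sigma$ for $\sigma<s+1-\tfrac d2$ and $s-\tfrac12\le s+1-\tfrac d2$ for $d\le 3$.

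The terms $T_3$ and $T_1$ are handled by a deterministic bound on $B_{R_0}$ together with a moment estimate for what remains. For $T_3$ one simply has $|T_3|\le\|u_N\|_{L^2}\|v_N\|_{L^2}\le R_0^2$ on $B_{R_0}$. For $T_1$, the algebra property of $\vec{H}^\sigma$ (valid since $\sigma>\tfrac d2$) and Sobolev multiplication give $\|u_N^{k-2}v_N\|_{H^{\sigma-1}}\le C(R_0)$ on $B_{R_0}$, so that $|T_1|\lesssim \|Q_{s,N}(u_N)\|_{H^{-\alpha}}\,\|u_N^{k-2}v_N\|_{H^{\alpha}}\le C(R_0)\,\|Q_{s,N}(u_N)\|_{H^{-\alpha}}$ for any $\alpha\le\sigma-1$, freezing the entire $v$-dependence. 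One needs $\alpha$ with $1<\alpha\le\sigma-1\approx s-\tfrac32$ when $d=3$ (respectively $0<\alpha\le s-\tfrac32$ when $d=2$), which exists precisely because $s>\tfrac52$. For such $\alpha$ the random variable $\|Q_{s,N}(u_N)\|_{H^{-\alpha}}$ has all moments bounded uniformly in $N$: since $D^su_N$ has the law of a frequency-truncated $D^{-1}$ of white noise, the Wick subtraction in \eqref{eqn: Q-def} gives $\mathbb{E}_{\vec{\nu}_s}|\widehat{Q_{s,N}(u_N)}(n)|^2\lesssim\langle n\rangle^{-1}$ when $d=3$ (and $\lesssim\langle n\rangle^{-2}\log\langle n\rangle$ when $d=2$), so $\mathbb{E}\|Q_{s,N}(u_N)\|_{H^{-\alpha}}^2<\infty$ uniformly exactly for $\alpha>1$ ($d=3$); and since each $\widehat{Q_{s,N}(u_N)}(n)$ lies in the second Wiener chaos, Minkowski's inequality followed by hypercontractivity gives $\big\|\,\|Q_{s,N}(u_N)\|_{H^{-\alpha}}\big\|_{L^{2p}}\lesssim p$. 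Hence $\|\mathbf{1}_{B_{R_0}}T_1\|_{L^{2p}}\lesssim_{R_0}p$.

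The term $T_2$ is the heart of the argument and the place where the integration by parts used for integer $s$ in \cite{OT,GOTW} is replaced. Since $D^sv_N$ is too rough to freeze, I would exploit the independence of $u$ and $v$: conditionally on $u$, the expression $T_2=\langle D^sv_N,G_N\rangle$ is a centred Gaussian in the coefficients $\{h_n\}$ whose conditional variance is $\sum_n|\widehat{G_N}(n)|^2\,\mathbb{E}|\widehat{D^sv_N}(n)|^2\le\|G_N\|_{L^2}^2$, using the crucial white-noise-type identity $\mathbb{E}|\widehat{D^sv_N}(n)|^2=|n|^{2s}(1+|n|^{2s})^{-1}\le 1$. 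Therefore $\|\mathbf{1}_{B_{R_0}}T_2\|_{L^{2p}}\lesssim\sqrt p\,\big\|\,\mathbf{1}_{B_{R_0}}\|G_N\|_{L^2}\big\|_{L^{2p}}$, and everything reduces to the deterministic bound $\|G_N\|_{L^2}\le C(R_0)$ on $B_{R_0}$. To obtain it I would Littlewood--Paley decompose $u_N^k$ and isolate the region where one factor has frequency $n_1$ much larger than the sum $\eta$ of the others; there $D^s$ acts on that factor up to the multiplier error $|n_1+\eta|^s-|n_1|^s\approx s|n_1|^{s-2}\,n_1\cdot\eta$, and summing the $k$ choices of dominant factor reproduces $k\,u_N^{k-1}D^su_N$, so that $G_N$ equals the balanced-frequency contributions (far more regular than $L^2$) plus the collected Taylor errors, which have the schematic form $\pi_{<}\!\big(\nabla(u_N^{k-1}),\,D^{s-1}u_N\big)$ together with higher-order, more regular terms. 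Estimating the paraproduct by Bony's inequality, $\|\pi_{<}(g,f)\|_{L^2}\lesssim\|g\|_{H^{\beta_1}}\|f\|_{H^{\beta_2}}$ with $\beta_1+\beta_2=\tfrac d2$ and $\beta_1<\tfrac d2$, and using that on $B_{R_0}$ one has $\|\nabla(u_N^{k-1})\|_{H^{\sigma-1}}\le C(R_0)$ and $\|D^{s-1}u_N\|_{H^{\sigma-s+1}}\le R_0$ with $\sigma-s+1\approx\tfrac12$, a short computation shows the exponent budget closes exactly at $s>\tfrac52$ when $d=3$; when $d=2$ there is slack, and the same analysis goes through for every odd $k$. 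This gives $\big\|\,\mathbf{1}_{B_{R_0}}\|G_N\|_{L^2}\big\|_{L^\infty}\le C(R_0)$ and hence $\|\mathbf{1}_{B_{R_0}}T_2\|_{L^{2p}}\lesssim_{R_0}\sqrt p$.

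Summing the three bounds yields \eqref{eqn: energy-estimate} with a constant depending only on $R_0$ and independent of $N$. The main obstacle is the $L^2$ estimate on the commutator $G_N$: carrying out the paraproduct decomposition together with the expansion of the Fourier multiplier $|n|^s$ so as to reproduce the exact cancellation that the Leibniz rule provides for integer $s$, and checking that the one-derivative gain left in the remainder is just enough to place $G_N$ in $L^2$ uniformly in $N$ at the threshold $s>\tfrac52$ in dimension three. A secondary point is the bookkeeping needed to verify that every remaining piece (the higher-order Taylor terms, the balanced-frequency interactions, and the analogous remainders that appear when $D^s$ hits products of more than two of the $u_N$'s) is at least as regular as the leading term, together with the standard Wick variance computation for $Q_{s,N}$.
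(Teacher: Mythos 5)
Your proposal is correct in overall structure and takes a genuinely different route for the hardest term. Where the proposal agrees with the paper: the reduction from $\vec{\rho}_s$ to $\vec{\nu}_s$ via the uniform $L^q$-density bound, the splitting $\partial_t\mathscr{E}_{s,N}|_{t=0}=T_1+T_2+T_3$, the trivial deterministic treatment of $T_3$, and the treatment of $T_1$ by pairing the second-chaos object $Q_{s,N}(u_N)$ against a factor controlled on $B_{R_0}$ and invoking hypercontractivity (giving growth $\lesssim p$). Where you genuinely diverge is on $T_2=\int D^sv_N\,G_N$. The paper does not condition on $u$: it runs the paraproduct decomposition of $ku^{k-1}D^su-D^su^k$ together with a third-order Taylor expansion of the multiplier $|n_1+n_2|^s$ (Lemma \ref{lemma:decomposition_lemma}), and then, rather than extracting a deterministic $L^2$ norm, it moves a half-power $D^\alpha$ from $v$ onto the $u$-factors to produce second-chaos random fields $D^{s-\alpha}v\,D^{s+1-\beta}u$ whose H\"older regularity and $L^p(\vec{\nu}_s)$ moments are controlled probabilistically by Lemma \ref{lemma:chaos_estimate}(iii). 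Your alternative, conditioning on $u$ and using the near-white-noise identity $\mathbb{E}|\widehat{D^sv_N}(n)|^2=\frac{|n|^{2s}}{1+|n|^{2s}}\le 1$, converts $T_2$ into a conditionally Gaussian variable with variance $\le\|G_N\|_{L^2}^2$, and thus reduces everything to a \emph{deterministic} commutator bound $\|D^su_N^k-ku_N^{k-1}D^su_N\|_{L^2}\le C(R_0)$ on the Sobolev ball. This is cleaner conceptually and even sharper in $p$ (it yields $\sqrt p$ rather than $p$ for $T_2$, though $T_1$ still forces the final $p$). The cost is that one must still carry out essentially the same multiplier/paraproduct expansion as Lemma \ref{lemma:decomposition_lemma}, but now targeting an $L^2$ (Sobolev) output rather than the $B^{\gamma}_{1,1}$ output in the paper; your exponent bookkeeping for the leading low-high commutator, the high-low piece $T_{D^su}u^{k-1}$, the resonant piece, and the third-order Taylor remainder all close precisely at the same threshold $s>\frac52$ (after choosing $\sigma$ close to $s-\frac12$), so the deterministic $\|G_N\|_{L^2}$ bound does hold, but it is not a free lunch and would need to be spelled out at the level of detail of Lemma \ref{lemma:decomposition_lemma}. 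In short: same core multiplier-expansion idea, different way of digesting the $D^sv$ factor, and both approaches land at the same regularity threshold.
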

As in \cite{GOTW}, we obtain the estimates necessary to the construction of our measure from a variational bound introduced in \cite{BG}.

\paragraph{Finishing the proof of Theorem \ref{thm: quasi-invariance}.}We now indicate how to finish the proof given Propositions \ref{lem: Lp-bound} and \ref{theorem:energy_estimate}. Since this part of the argument requires no modification from \cite{GOTW}, we refer the reader to that paper for details.

Let $\sigma<s-\frac{3}{2}$, $d=3$ and $k=3$ or $k=5$. Fix $R>0$ and let $A\subset B_R\subset \vec{H}^\sigma$ be a Borel measurable set (for the topology induced by the norm) such that
\[\vec{\rho}_{s,N}(A)=0.\] 
When $k=3$, a simple application of Gronwall's lemma and conservation of the energy\footnote{See \cite[Lemma 2.5]{GOTW}.} shows that for any $T>0$, there is a radius $C(T,R)$ such that for $|t|\le T$,
\begin{equation}\label{eqn: bound-2}
\Phi_N(t)(B_R)\subset B_{C(T,R)}.
\end{equation}
The estimate \eqref{eqn: bound-2} in case $k=5$ is proved in Appendix \ref{sec: critical}.

By the change of variables formula, we have
\[\vec{\rho}_{s,N}(\Phi_N(t)(A))=\mathcal{Z}_{s,N}^{-1}\int_{A}e^{-E_N(\pi_N\Phi_N(t)(u,v))-E^q_N(u,v)}\mathrm{d} L_N\otimes \vec{\nu}_{s,N}^\perp(\mathrm{d}u).\]
Differentiating and using \eqref{eqn: energy-estimate}, we find
\[\frac{\mathrm{d}}{\mathrm{d}t}\vec{\rho}_{s,N}(\Phi_N(t)(A))\le C_R p\cdot  \vec{\rho}_{s,N}(\Phi_N(t)(A))^{1-\frac{1}{p}}\]
This inequality is equivalent to
\begin{equation}\label{eqn: deriv-ineq}
  \frac{\mathrm{d}}{\mathrm{d}t}\vec{\rho}_s(\Phi_N(t)(A))^{\frac{1}{p}}\le C_R.\end{equation}
The linear dependence on $p$ on the right side is essential in \eqref{eqn: energy-estimate} plays an essential role here. Integrating \eqref{eqn: deriv-ineq}, we find
\begin{align*}
  \vec{\rho}_{s,N}(\Phi_N(t)(A))&\le (\vec{\rho}_{s,N}(A)^{1/p}+C_Rt)^p\\
&\le 2^p \vec{\rho}_{s,N}(A) + C_R^p 2^pt^p.
\end{align*}
Taking $t\le \frac{1}{4C_R}$ and $p$ large enough, we find that
\begin{equation}\label{eqn: eps-bound}
  \vec{\rho}_{s,N}(\Phi_N(t)(A))<\varepsilon,
  \end{equation}
uniformly in $N$, for any $A\subset B_R\subset \vec{H}^\sigma(\mathbb{T}^3)$.
Theorem \ref{thm: quasi-invariance} follows from \eqref{eqn: eps-bound} by a soft approximation argument identical to that in \cite[Section 5.2]{OT}.

For Theorem \ref{thm: bound}, the estimate \eqref{eqn: bound-2} is replaced by  \eqref{eqn: det-bound}, where $T$ now depends on $R$, but otherwise the proof proceeds as before.

\section{Basic estimates}
We use the notation $A\lesssim B$ to mean that $A\le CB$ where $C$ is an unspecified constant independent of $N$ whose exact value is unimportant for the argument.
\subsection{Littlewood-Paley theory} We denote the Fourier transform of a function $u\in L^1(\mathbb{T}^3)$ by
\[\widehat{u}(n)=\frac{1}{(2\pi)^3}\int u(x)e^{in\cdot x}\,\mathrm{d}x.\]
The Fourier transform of $u\in \mathcal{D}'(\mathbb{T}^3)$, the space of distributions on $\mathbb{T}^3$ is defined in the usual fashion by duality. As we have already stated above, we denote by $\pi_N$ the Dirichlet truncation of a distribution in Fourier space:
\begin{equation}\label{eqn: dirichlet-proj}
  \pi_N u =\sum_{|n|\le N} \widehat{u}(n)e^{in\cdot x}.
  \end{equation}

We make extensive use of Littlewood-Paley theory in its dyadic decomposition incarnation. See \cite{BDC} for a thorough treatment. Following these authors, we $B(\xi,r)$ denote the ball in $\mathbb{R}^3$ of radius $r$ around a point $\xi$ in phase space.

Consider functions $\chi$, $\tilde{\chi}$ such that
\begin{align*}
\mathrm{supp}\chi&\subset B\big(0,\frac{4}{3}\big),\\
\mathrm{supp}\tilde{\chi}&\subset B\big(0,\frac{3}{4}\big)\setminus B\big(0,\frac{3}{8}\big).
\end{align*}
We define $\psi_{-1}=\chi$ and
\[\psi_j(\cdot)=\tilde{\chi}(2^{-j}\cdot), \quad j\ge 0.\]
We also define $\mathbf{P}_j$, the \textit{Littlewood-Paley projector}, associated to symbol $\psi_j$ by
\[\mathbf{P}_j u(x)=\big(\psi_j(\nabla)u\big)(x)=\sum_{n\in \mathbb{Z}^3} \tilde{\chi}_j(n)\widehat{u}(n)e^{in\cdot x}.\]
 We define the (low-high) paraproduct $T_ab$ by
\[T_ab:= \sum_{j=-1}^{\infty}S_{j-1}a\mathbf{P}_jb,\] where 
\[S_j := \sum_{k\leq j-1}\mathbf{P}_k.\]

\subsection{Basic estimates for Besov-type norms}
We recall the definition of Besov spaces through the Littlewood-Paley decomposition. For $s\in \mathbb{R}$ and $1\le p,q\le \infty$, the Besov space $B^s_{p,q}(\mathbb{T}^3)$ is the set of distributions $u$ on $\mathbb{T}^3$ such that
\[\|u\|_{B^s_{p,q}(\mathbb{T}^3)}=\left\|\big(2^{sj}\|\mathbf{P}_j u\|_{L_x^p}\big)_{j\ge 0}\right\|_{\ell^q_j}<\infty.\]
For $p=q=2$, this corresponds to the Sobolev spaces $H^s(\mathbb{T}^3)$, while for $s>0$, $s\notin \mathbb{Z}$, this is the H\"older space $C^s(\mathbb{T}^3)$. We define H\"older spaces for negative $s$ or for $s\in \Z$ by setting $C^s: = B_{\infty,\infty}^{s}$. We again refer to \cite{BDC,MWX} for details.
Following \cite{GOTW}, we denote
\[\vec{B}^s_{p,q}(\mathbb{T}^3)=B^s_{p,q}(\mathbb{T}^3)\times B^{s-1}_{p,q}(\mathbb{T}^3).\]

\begin{lemma}
The following estimates hold
\begin{enumerate}[(i)]

\item Let $s_0,s_1,s\in\R$ and $\nu\in[0,1]$ be such that $s=(1-\nu)s_0+\nu s_1$. Then,
\begin{equation}\label{EQU: Interpolation}
  \norm{u}_{H^s}\lesssim \norm{u}_{H^{s_0}}^{1-\nu} \norm{u}_{H^{s_1}}^{\nu}.
\end{equation}
\item Let $s_0,s_1\in\R$ and $p_0,p_1,q_0,q_1\in[1,\infty]$. Then,
\begin{align}\label{EQU: Intermediate embeddings}
    \begin{split}
        \norm{u}_{B_{p_0,q_0}^{s_0}}&\lesssim \norm{u}_{B_{p_1,q_1}^{s_1}}\quad \textnormal{for } s_0\leq s_1, p_0\leq p_1 \textnormal{ and } q_0\geq q_1, \\
        \norm{u}_{B_{p_0,q_0}^{s_0}} &\lesssim \norm{u}_{B_{p_0,\infty}^{s_1}} \quad \textnormal{for } s_0 < s_1,\\
        \norm{u}_{B_{p_0,\infty}^0}&\lesssim \norm{u}_{L^p}\lesssim \norm{u}_{B_{p_0,1}^0} .
    \end{split}
\end{align}
\item Let $s>0$. Then,
\begin{equation}\label{EQU: Algebra property}
    \norm{uv}_{C^{s}}\lesssim \norm{u}_{C^{s}}\norm{v}_{C^{s}}.
\end{equation}
\item Let $1\leq p_1\leq p_0\leq \infty$, $q\in[1,\infty]$ and $s_1=s_0+d\left(\frac{1}{p_1}-\frac{1}{p_0}\right)$. Then,
\begin{equation}\label{EQU: Besov embedding}
    \norm{u}_{B^{s_0}_{p_0,q}}\lesssim \norm{u}_{B^{s_1}_{p_1,q}}.
\end{equation}
\item Let $s\in\R$ and $p,p',q,q'\in[1,\infty]$ be such that $\frac{1}{p}+\frac{1}{p'}=1$ and $\frac{1}{q}+\frac{1}{q'}=1$. Then,
\begin{equation}\label{EQU: Duality}
    \left|\int_{\T^d} uv\,dx\right| \leq\norm{u}_{B^{s}_{p,q}}\norm{u}_{B^{-s}_{p',q'}}.
\end{equation}
\item Let $p,p_0,p_1,p_2,p_3\in[1,\infty]$ be such that $\frac{1}{p_0}+\frac{1}{p_1}=1$ and $\frac{1}{p_2}+\frac{1}{p_3}=1$.. Then for $s>0$,
\begin{equation}\label{EQU: Fractional Leibniz rule}
    \norm{uv}_{B^{s}_{p,q}}\lesssim \norm{u}_{B^{s}_{p_0,q}}\norm{v}_{L^{p_1}}+\norm{v}_{B^{s}_{p_2,q}}\norm{u}_{L^{p_3}}.
\end{equation}
\item Let $m>0$ be an integer, $s>0$ and $q,p,p_0,p_1\in[0,\infty]$ satisfy $\frac{1}{p_0}+\frac{1}{p_1}=\frac{1}{p}$. Then 
\begin{equation}\label{EQU: Fracational Leibniz rule cor}
    \norm{u^{m+1}}_{B^{s}_{p,q}}\lesssim \norm{u^m}_{L^{p_0}}\norm{u}_{B^s_{p_1,q}}.
\end{equation}

\item Let $s_0<0<s_1$ be such that $s_0+s_1>0$. Then,
\begin{equation}\label{EQU: Multiplicative estimate}
    \norm{uv}_{C^{s_0}}\lesssim \norm{u}_{C^{s_0}}\norm{v}_{C^{s_1}}.
\end{equation}
\end{enumerate}
\end{lemma}

We refer to \cite{BDC, MWX} for proofs. We also note the Bernstein inequality:
\begin{equation}\label{eqn: bernstein}
\|\mathbf{P}_j u\|_{L^p}\lesssim  2^{3j(\frac{1}{p}-\frac{1}{q})}\|\mathbf{P}_j u\|_{L^q}
\end{equation}
for $p\le q\le \infty$.

\subsection{Wiener chaos and hypercontractivity}
The hypercontractive estimate is a key tool to estimate nonlinear functions of Gaussian random variables. We recall this estimate here. See S. Janson's book \cite{janson} for more information on hypercontractivity and Wiener chaos spaces.

Let $X_n$, $n\ge 1$ be a sequence of i.i.d. standard Gaussian random variables. We define $\mathcal{H}_k$, the homogeneous Wiener chaos of order $k$ to be the closed span of the polynomials
\begin{equation}\label{eqn: hermite-product}
\prod_{n=1}^\infty H_{k_n}(X_n),
\end{equation}
where $H_j$ is the Hermite polynomial of degree $j$  and $k=\sum_{n=1}^\infty k_n$. Note that since $H_0(x)=1$, the formally infinite product in \eqref{eqn: } has only finitely many non-trivial factors. We have
\[L^2(\Omega,\mathcal{F},\mathbb{P})=\oplus_{k=0}^\infty \mathcal{H}_k,\]
where $\mathcal{F}$ is the $\sigma$-algebra generated by the Gaussian random variables $X_n$, $n\ge 1$.

The next lemma gives the crucial hypercontractivity estimate. See \cite{janson} for a proof.
\begin{proposition} Let $p\ge 2$ be finite. If $X\in \mathcal{H}_k$, then
  \begin{equation}\label{eqn: wiener-chaos}
 \big( E[|X|^p]\big)^{\frac{1}{p}}\le (p-1)^{\frac{k}{2}}\big(E[|X|^2]\big)^{\frac{1}{2}}.
  \end{equation}
\end{proposition}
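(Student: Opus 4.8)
The statement is the classical Gaussian hypercontractivity (Nelson) estimate, and the plan is to deduce it from Nelson's hypercontractivity theorem for the Ornstein--Uhlenbeck semigroup, using the fact that the homogeneous chaos $\mathcal{H}_k$ is exactly the eigenspace of that semigroup with eigenvalue $\rho^k$. First I would reduce to a finite-dimensional, polynomial setting: any $X\in\mathcal{H}_k$ is an $L^2$-limit of finite linear combinations $X^{(m)}$ of products $\prod_{n=1}^m H_{k_n}(X_n)$ with $\sum_n k_n=k$, depending on $X_1,\dots,X_m$ only, and these approximants again lie in $\mathcal{H}_k$. Each $X^{(m)}$ is a polynomial in the $X_n$, hence lies in every $L^p$; I would prove \eqref{eqn: wiener-chaos} for $X^{(m)}$ and then pass to the limit, the uniform $L^p$ bound along the sequence together with $L^2$-convergence yielding $X\in L^p$ with the stated bound (e.g. via Fatou applied to an a.s.-convergent subsequence). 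From here $X=X^{(m)}$ is a function of $x=(x_1,\dots,x_m)$ and we work on $(\R^m,\gamma_m)$ with $\gamma_m$ the standard Gaussian.

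\textbf{The semigroup and its eigenfunctions.} Next I would introduce the Ornstein--Uhlenbeck (Mehler) semigroup $U_\rho$, $\rho\in[0,1]$, given by
\[
(U_\rho f)(x)=\int_{\R^m} f\big(\rho x+\sqrt{1-\rho^2}\,y\big)\,\gamma_m(\mathrm{d}y).
\]
The basic computation is the one-variable identity $\mathbb{E}_y\big[H_j(\rho x+\sqrt{1-\rho^2}\,y)\big]=\rho^j H_j(x)$ (immediate from the generating function of the Hermite polynomials), which tensorizes to $U_\rho\big(\prod_n H_{k_n}(x_n)\big)=\rho^{\sum_n k_n}\prod_n H_{k_n}(x_n)$. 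Since $X=X^{(m)}\in\mathcal{H}_k$ is a linear combination of such products of total degree $k$, this gives the key identity $U_\rho X=\rho^k X$.

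\textbf{Nelson's theorem and conclusion.} The analytic input is Nelson's hypercontractivity theorem: for $1\le p_0\le q<\infty$ and $0\le\rho\le\sqrt{(p_0-1)/(q-1)}$ one has $\|U_\rho f\|_{L^q(\gamma_m)}\le\|f\|_{L^{p_0}(\gamma_m)}$, with constant $1$, uniformly in the dimension $m$. I would either invoke this directly (as the paper does via \cite{janson}) or recall the standard route: prove the sharp two-point inequality on $\{-1,+1\}$ by elementary one-variable calculus, tensorize it over $\{-1,+1\}^n$ using multiplicativity of the tensor-power norm, and pass to the Gaussian by the central limit theorem applied to $\tfrac1{\sqrt n}(\varepsilon_1+\dots+\varepsilon_n)$ with $\varepsilon_i$ Rademacher. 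Applying the theorem with $p_0=2$, $q$ equal to the exponent $p\ge2$ in the statement, and $\rho=(p-1)^{-1/2}$, which is exactly the admissible endpoint $\sqrt{(2-1)/(p-1)}$, yields $\|U_\rho X\|_{L^p(\gamma_m)}\le\|X\|_{L^2(\gamma_m)}$. Substituting $U_\rho X=\rho^k X$ gives $\rho^k\|X\|_{L^p}\le\|X\|_{L^2}$, i.e. $\|X\|_{L^p}\le\rho^{-k}\|X\|_{L^2}=(p-1)^{k/2}\|X\|_{L^2}$. The case $p=2$ is trivial, and undoing the finite-dimensional reduction completes the proof.

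\textbf{Main obstacle.} The only real difficulty is Nelson's theorem itself; the spectral identity $U_\rho X=\rho^k X$ and the reductions are routine. If one insists on a self-contained argument, the work sits entirely in the sharp two-point estimate and the CLT limiting procedure (equivalently, in deriving hypercontractivity from the Gaussian logarithmic Sobolev inequality via Gross's semigroup-differentiation argument). In the present context, citing \cite{janson} is the natural choice and no new ideas are needed.
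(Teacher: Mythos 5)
Your proposal is correct and reproduces the standard hypercontractivity proof (finite-dimensional reduction, the Hermite eigenfunction identity $U_\rho X=\rho^k X$, and Nelson's theorem with $\rho=(p-1)^{-1/2}$), which is exactly the argument in Janson's book that the paper cites; the paper itself offers no proof beyond that citation, so there is no divergence to report.
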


\section{Energy estimate for fractional $s$}
In this section, we derive Proposition \ref{theorem:energy_estimate}, the energy estimate  for fractional $s$. This is the analogue of \cite[estimate 3]{GOTW} to general nonlinearities and fractional regularity $s>5/2$. 

The possibility of fractional $s$ makes our derivation more involved, since we cannot integrate by parts as in \cite[Equation 1.25]{OT} and \cite[Equation 3.5]{GOTW} to remove the most singular spatial derivatives in the time derivative of the energy. Instead, we perform a higher order expansion to exploit the cancellation in the commutator term $ku_N^{k-1}D^su_N-D^su_N^k$ appearing in \eqref{eqn:energy_derivative}.

Recall that
\begin{equation}\label{eqn:energy_derivative}
	\begin{split}
		\partial_t \mathscr{E}_{s,N}(\pi_N\Phi_N(t)(\vec{u}))|_{t=0} &= \frac{k(k-1)}{2}\int Q_{s,N}(u_N)v_Nu_N^{k-2} \\
		&+ \int D^sv_N(ku_N^{k-1}D^su_N-D^su_N^k) + \int u_N \int v_N.
	\end{split}
\end{equation}
\begin{proposition}\label{theorem:energy_derivative}
	For $s>\frac{5}{2}$, there exists $\sigma < s-\frac{1}{2}$, such that, for $\varepsilon$ sufficiently small,
	\begin{equation}\label{ineq:energy_derivative}
	\begin{split}
		&\left|\partial_t \mathscr{E}_{s,N}(\pi_N\Phi_N(t)(\vec{u}))|_{t=0} \right|\\
\lesssim ~&(1+\norm{\vec{u_N}}_{\vec{H}^{\sigma}}^{k-1})(1+\norm{Q_{s,N}(u_N)}_{C^{-1-\varepsilon}}
		+\norm{u_N}_{C^{s-\frac{1}{2}-\varepsilon}}\norm{v_N}_{C^{s-\frac{3}{2}-\varepsilon}}\\
		&+\norm{D^{s-\frac{3}{2}}vD^{s+\frac{1}{2}}u}_{C^{-1-\varepsilon}}+\norm{D^{s-\frac{3}{2}}vD^{s-\frac{1}{2}}u}_{C^{-\varepsilon}}). 
		\end{split}
	\end{equation}
	The implicit constants are uniform in $N$.
\end{proposition}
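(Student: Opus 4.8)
The plan is to bound each of the three terms in \eqref{eqn:energy_derivative} separately, extracting a factor of $(1+\|\vec{u}_N\|_{\vec H^\sigma}^{k-1})$ times one of the ``good'' random quantities on the right side of \eqref{ineq:energy_derivative}. The last term $\int u_N\int v_N$ is harmless: by Cauchy--Schwarz it is bounded by $\|u_N\|_{L^2}\|v_N\|_{L^2}\lesssim \|\vec u_N\|_{\vec H^\sigma}^2$, which is absorbed into the prefactor. For the quadratic-in-$Q$ term $\tfrac{k(k-1)}2\int Q_{s,N}(u_N)v_N u_N^{k-2}$, I would use the duality estimate \eqref{EQU: Duality} pairing $Q_{s,N}(u_N)\in C^{-1-\varepsilon}$ against $v_N u_N^{k-2}\in C^{1+\varepsilon}$ (or a Besov variant at finite integrability), and then control $\|v_N u_N^{k-2}\|_{C^{1+\varepsilon}}$ by the fractional Leibniz rule \eqref{EQU: Fractional Leibniz rule}/\eqref{EQU: Fracational Leibniz rule cor} and the algebra property \eqref{EQU: Algebra property}, distributing regularity so that one factor carries $\approx s-\tfrac12-\varepsilon>1+\varepsilon$ derivatives and the remaining $k-2$ factors are measured in $L^\infty$, each costing $\lesssim \|\vec u_N\|_{\vec H^\sigma}$ by Sobolev embedding since $\sigma$ is close to $s-\tfrac12>2$. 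This produces $\|Q_{s,N}(u_N)\|_{C^{-1-\varepsilon}}(1+\|\vec u_N\|_{\vec H^\sigma}^{k-1})$.

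The heart of the matter is the commutator term $\int D^sv_N\,(k u_N^{k-1}D^su_N - D^s u_N^k)$, and here is where the absence of integer $s$ forces the higher-order paraproduct expansion. I would write $D^s(u_N^k)$ via the Bony decomposition into low-high, high-low, and resonant paraproducts, and likewise expand $k u_N^{k-1}D^s u_N$. The leading low-high piece $T_{D^s u_N}(u_N^{k-1})$-type term is exactly cancelled (to leading order) by the corresponding piece of $k u_N^{k-1}D^s u_N$; the difference is a commutator of the form $[D^s, u_N^{k-1}]$ acting on $u_N$, which gains one derivative back. The key mechanism is a symbol expansion: on a frequency interaction where $u_N^{k-1}$ is at frequency $\ll |n|$, one Taylor-expands $|n+\eta|^s = |n|^s + s|n|^{s-2}n\cdot\eta + O(|n|^{s-2}|\eta|^2)$, so the commutator is morally $D^{s-1}$ of a product times a derivative of the low-frequency factor, i.e. it behaves like a term with $s-1$ derivatives on the high factor and the lost derivative landing on a low factor. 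I would then split $D^s u_N = D^{s-1/2}(D^{1/2}u_N)$ and pair against $D^s v_N = D^{s-3/2}(D^{3/2}v_N)$ suitably, so that the resulting expression is recognizable as a paraproduct of the objects $D^{s-3/2}v\,D^{s+1/2}u$ (measured in $C^{-1-\varepsilon}$) and $D^{s-3/2}v\,D^{s-1/2}u$ (measured in $C^{-\varepsilon}$), multiplied by $L^\infty$ norms of the remaining $u_N$ factors, each again $\lesssim\|\vec u_N\|_{\vec H^\sigma}$. The terms $\|u_N\|_{C^{s-1/2-\varepsilon}}\|v_N\|_{C^{s-3/2-\varepsilon}}$ absorb the high-low and resonant interactions, where no cancellation is available but both functions carry enough regularity to be paired directly via \eqref{EQU: Multiplicative estimate} or \eqref{EQU: Duality}.

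The main obstacle I anticipate is bookkeeping the paraproduct expansion of the commutator with enough precision: one must show that after extracting the leading cancellation, \emph{every} remaining piece either (a) has a net of at most $-\varepsilon$ or $-1-\varepsilon$ derivatives too many to pair in $L^2$ but can be matched to one of the listed $C^{-\varepsilon}$/$C^{-1-\varepsilon}$ bilinear objects, or (b) is subcritical and estimable by brute force in $\vec H^\sigma$. Keeping track of exactly how many derivatives the symbol expansion $|n+\eta|^s-|n|^s-s|n|^{s-2}n\cdot\eta=O(|n|^{s-2}|\eta|^2)$ yields, and checking that $s>5/2$ is precisely what makes the remainder summable over Littlewood--Paley blocks (the constraint $s-3/2>1$, i.e. $s>5/2$, appears to be what forces the threshold), is the delicate point. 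A secondary subtlety is that $Q_{s,N}$ rather than $(D^s u_N)^2$ appears, so when $(D^s u_N)^2$ is generated inside the commutator estimates one must re-Wick-order, producing the constant $\sigma_N$ times lower-order terms; these contribute to the ``$1+$'' in the prefactor and to the $\|Q_{s,N}(u_N)\|_{C^{-1-\varepsilon}}$ term and must be shown harmless. Once Proposition \ref{theorem:energy_derivative} is in place, Proposition \ref{theorem:energy_estimate} follows by taking $L^p(\vec\rho_s)$ norms, using the equivalence of $\vec\rho_s$ and $\vec\nu_s$, the Wiener chaos bound \eqref{eqn: wiener-chaos} to control the random quantities with the stated linear-in-$p$ growth, and Hölder to handle the polynomial prefactor against the indicator of $B_{R_0}$.
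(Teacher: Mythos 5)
Your overall strategy is on the right track: you correctly identify the commutator term as the crux, isolate the paraproduct commutator $D^s(T_{u^{k-1}}u)-T_{u^{k-1}}(D^su)$ after peeling off the high/resonant pieces, and invoke a Taylor expansion of the multiplier symbol to extract the cancellation. This is exactly the paper's plan via Lemmas~\ref{high term lemma}, \ref{low and resonant term lemma} and~\ref{lemma:decomposition_lemma}.

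However, there is a genuine gap: you expand the symbol only to first order, $|n+\eta|^s - |n|^s = s|n|^{s-2}n\cdot\eta + O(|n|^{s-2}|\eta|^2)$. The paper's Remark after Lemma~\ref{lemma:decomposition_lemma} points out precisely why this is not enough: a single expansion yields a remainder bound $\|F_2+R\|_{B_{1,1}^{\beta+\rho}}\lesssim\|w\|_{B_{1,\infty}^{\rho}}\|u\|_{C^{s+\beta}}$ only for $\rho<2$. But in the proof, pairing $D^s v\in C^{-3/2-\varepsilon}$ against the remainder in $B^{3/2+\varepsilon}_{1,1}$ requires $\beta+\rho=\tfrac32+\varepsilon$ with $s+\beta\le s-\tfrac12-\varepsilon$ (the a.s.\ regularity of $u$), forcing $\rho\ge 2+2\varepsilon>2$. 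This is precisely the threshold a first-order expansion cannot reach, no matter how you rebalance the Besov exponents: the total regularity budget is fixed by the field. One must expand to second order, producing the term $F_2=\tfrac{s(s-2)}{2}\sum_j T_{\partial_j^2 w}(\partial_j^2 D^{s-4}u)+\tfrac{s}{2}T_{D^2 w}D^{s-2}u$ and pushing the remainder's validity to $\rho<3$. Relatedly, you misattribute the role of the bilinear object $D^{s-\frac32}v\,D^{s-\frac12}u$ in $C^{-\varepsilon}$ to ``absorbing high-low and resonant interactions''; in the paper's argument it arises instead from $F_2$ and from the secondary commutator $[D^{3/2},T_{Du^{k-1}}]$ that appears when you move $D^{3/2}$ from $v$ to $u$ inside the $F_1$-pairing. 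Without the second-order term you have no way to generate this object, and the remainder from the first-order expansion is simply not summable in the required Besov norm.

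A smaller point: the re-Wick-ordering issue you flag as a ``secondary subtlety'' does not actually arise. No $(D^s u_N)^2$ is generated inside the commutator estimate---$D^s v_N$ and $D^s u_N$ are distinct factors---so the quantity $Q_{s,N}(u_N)$ only appears in the first term of \eqref{eqn:energy_derivative}, where it is handled directly by duality as in \cite{GOTW}, exactly as you describe for that term.
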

Then Proposition \ref{theorem:energy_estimate} follows from (\ref{ineq:energy_derivative}) and Lemma \ref{lemma:chaos_estimate}.

Our goal is to prove uniform in $N$ estimates for the derivative of the energy \eqref{eqn:energy_derivative}. For this reason and for simplicity of notation, in this section we omit the subscripts $N$ on $u$ and $v$ in deriving the estimates.

\begin{lemma}\label{lemma:chaos_estimate}
	We have
	\begin{align}
	\norm{v}_{L^p(dv_s)C^{s-\frac{3}{2}-\varepsilon}(\mathbb{T}^3)}&\lesssim \sqrt{p}, \label{eqn: v-C-estimate}\\
	\norm{u}_{L^p(\mathrm{d}\vec{\nu}_s)C^{s-\frac{1}{2}-\varepsilon}(\mathbb{T}^3)}&\lesssim \sqrt{p}. \label{eqn: u-C-estimate}
	\end{align}
	If $\alpha+\beta>\frac{3}{2}$, $\alpha, \beta \geq 0$, then for $\gamma < \min\{\alpha-\frac{3}{2},\beta-\frac{3}{2},\alpha+\beta-3\}$,
	\begin{equation}\label{eqn: third}
	\norm{D^{s-\alpha}vD^{s+1-\beta}u}_{L^p(\mathrm{d}\vec{\nu}_s)C^{\gamma}(\mathbb{T}^3)}\lesssim p.
	\end{equation}
\end{lemma}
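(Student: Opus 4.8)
The plan is to prove the three estimates in Lemma \ref{lemma:chaos_estimate} by realizing each left-hand side as (the Besov norm of) a random variable lying in a fixed Wiener chaos of finite order, then combining the hypercontractivity bound \eqref{eqn: wiener-chaos} with a Littlewood--Paley/Minkowski-type argument to pass from pointwise $L^p(\Omega)$ control to control of the $C^\gamma = B^\gamma_{\infty,\infty}$ norm. Throughout we work with the series representations \eqref{eqn: useries}--\eqref{eqn: vseries}, so that $D^{s-\alpha}v$ has Fourier coefficients of size $\sim |n|^{-\alpha} h_n$ and $D^{s+1-\beta}u$ has coefficients $\sim |n|^{-\beta} g_n$ (up to the $\jbrac{\cdot}$ normalization, which is harmless).

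First I would treat \eqref{eqn: v-C-estimate} and \eqref{eqn: u-C-estimate}, which are the easy, linear cases. For each dyadic block $j$, $\mathbf{P}_j v$ is a Gaussian (first-chaos) random variable, and an explicit variance computation gives $\E_{\vec\nu_s}\|\mathbf{P}_j v(x)\|^2 \lesssim \sum_{|n|\sim 2^j} |n|^{-2(s - (s-3/2)) } \cdot \ldots$; more precisely, since $v$ has regularity $s-3/2-$, one gets $\E\, \|\mathbf{P}_j v\|_{L^2_x}^2 \lesssim 2^{-2j(s-3/2)} 2^{-2j\varepsilon'}$ for the shifted exponent, hence by Gaussian hypercontractivity $\E\,\|\mathbf{P}_j v\|_{L^p_x}^p \lesssim (C p)^{p/2} 2^{-jp(s-3/2-\varepsilon)}$ after using Bernstein \eqref{eqn: bernstein} to trade $L^\infty_x$ for $L^p_x$ and summing; taking $p$-th roots and then the supremum over $j$ (which costs only a factor absorbed by lowering $\varepsilon$ slightly, via a second application of \eqref{EQU: Intermediate embeddings}) yields $\|v\|_{L^p(\vec\nu_s) C^{s-3/2-\varepsilon}} \lesssim \sqrt p$. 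The bound \eqref{eqn: u-C-estimate} is identical with $s-3/2$ replaced by $s-1/2$.

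The substantive case is \eqref{eqn: third}. Here the product $D^{s-\alpha}v \cdot D^{s+1-\beta}u$ lies in the (inhomogeneous) second Wiener chaos — it is bilinear in the independent Gaussian families $\{h_n\}$, $\{g_m\}$ — so \eqref{eqn: wiener-chaos} gives the power $(p-1)^1 = p$ rather than $\sqrt p$, which is exactly the $p$ on the right-hand side. The strategy is again blockwise: estimate $\E_{\vec\nu_s}\|\mathbf{P}_j(D^{s-\alpha}v \, D^{s+1-\beta}u)\|_{L^2_x}^2$ by expanding in Fourier modes. Writing the block as a sum over frequency pairs $(n,m)$ with $|n+m|\sim 2^j$, the $L^2_x$ expectation becomes $\sum_{|n+m|\sim 2^j} \jbrac{n}^{-2\alpha}\jbrac{m}^{-2\beta}$ (the cross terms vanish by independence/mean zero, up to a diagonal contribution when one also allows $n = -m$ pairings that must be tracked but are lower order given $\alpha,\beta\ge 0$). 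I would bound this double sum by splitting into the regimes $|n|\lesssim |m|$ and $|m|\lesssim |n|$: in the first, $|m|\sim 2^j$ and $\sum_{|n| \lesssim 2^j}\jbrac n^{-2\alpha} \lesssim 2^{j(3-2\alpha)_+}$ (with a log if $2\alpha = 3$), giving a contribution $\lesssim 2^{-2j\beta} 2^{j(3-2\alpha)_+}$, and symmetrically in the second. After using Bernstein to upgrade $L^2_x \to L^\infty_x$ at cost $2^{3j}$ (or rather $2^{3j/p}$ after hypercontractivity — one applies hypercontractivity first at the level of $L^p_x$ then Bernstein, as above) and hypercontractivity to get the $p$, one checks that the resulting exponent of $2^j$ is strictly negative precisely under the stated condition $\gamma < \min\{\alpha - 3/2,\ \beta - 3/2,\ \alpha+\beta - 3\}$ together with $\alpha + \beta > 3/2$ (the latter guaranteeing the "balanced" case $|n|\sim|m|\sim 2^j$ also converges), so the dyadic sum over $j$ converges and the $\ell^\infty_j$ norm is controlled.

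The main obstacle I anticipate is the careful bookkeeping of the Fourier-space double sum in \eqref{eqn: third}: one must correctly identify which diagonal/resonant pairings survive after taking the expectation (the Wick structure of the product of two independent Gaussian series), handle the borderline logarithmic losses when $2\alpha$, $2\beta$, or $2\alpha+2\beta$ hits an integer threshold (absorbed by the strict inequality defining $\gamma$ and by a slack $\varepsilon$), and verify that the three competing regimes each produce an exponent $\le \gamma$ under exactly the stated hypotheses — in particular that the $\alpha+\beta - 3$ term is what governs the balanced frequency regime. A secondary, more routine point is justifying the interchange of $L^p(\Omega)$ and $L^p_x$ (Minkowski, using $p \ge 2$) and the passage from blockwise bounds to the $B^\gamma_{\infty,\infty}$ norm, which is standard once the per-block decay is in hand.
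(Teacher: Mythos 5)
Your proposal follows essentially the same route as the paper: embed $C^\gamma$ into $B^{\gamma+3/p}_{p,1}$ via the Besov/Bernstein inequality, interchange $L^p_\omega$ and $L^p_x$ and apply hypercontractivity at fixed $x$ (yielding $\sqrt p$ in the first chaos, $p$ in the second), then bound the blockwise variance by the convolution sum $\sum_{n_1}\jbrac{n_1}^{-2\alpha}\jbrac{n-n_1}^{-2\beta}\lesssim \jbrac n^{-\min\{2\alpha,2\beta,2\alpha+2\beta-3\}+\varepsilon}$, which the paper establishes by a four-region split equivalent to your high/low/balanced decomposition. One reassurance: your worry about "diagonal/resonant pairings" in $D^{s-\alpha}v\,D^{s+1-\beta}u$ is moot, since $\{g_n\}$ and $\{h_n\}$ are independent families and there are therefore no self-pairings to Wick-subtract, unlike the case of $(D^s u)^2$.
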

\begin{proof}
	We only prove the third bound, \eqref{eqn: third}. The first two can be proved in a similar manner. To simplify the notations, let $L^p_w, L^q_x$ denote $L^p(\mathrm{d}\vec{\nu}_s), L^q(\mathbb{T}^3)$ respectively. By Bernstein's inequality \eqref{eqn: bernstein}, Fubini's theorem and Wiener chaos estimate \eqref{eqn: wiener-chaos},
	\begin{equation}\label{chaos estimate_bound}
	\begin{split}
	\norm{D^{s-\alpha}v D^{s+1-\beta}u}_{L^p_{\omega}C_x^{\gamma}}
	&\leq \norm{D^{s-\alpha}v D^{s+1-\beta}u}_{L^p_{\omega} B^{\gamma+\frac{3}{p}}_{p,1}} \\
	&= \norm{\sum_j 2^{j(\gamma+\frac{3}{p})}\norm{\mathbf{P}_j(D^{s-\alpha}v D^{s+1-\beta}u)}_{L^p_x}}_{L^p_{\omega}}\\
	&\leq \sum_j 2^{j(\gamma+\frac{3}{p})}\norm{\mathbf{P}_j(D^{s-\alpha}v D^{s+1-\beta}u)}_{L^p_xL^p_{\omega}}\\
	&\lesssim p\sum_j 2^{j(\gamma+\frac{3}{p})}\norm{\mathbf{P}_j(D^{s-\alpha}v D^{s+1-\beta}u)}_{L^p_xL^2_{\omega}}.
	\end{split}
	\end{equation}
	On the other hand, by series representation (\ref{series}),
	\begin{equation}\label{ineq: expectation of dyadic}
	\begin{split}
	\mathbb{E}_{\omega}[|\mathbf{P}_j(D^{s-\alpha}v D^{s+1-\beta}u)|^2]
	&= \sum_{n\sim 2^j}\sum_{n_1}\dfrac{1}{\jbrac{n_1}^{2\alpha}\jbrac{n-n_1}^{2\beta}}
	\end{split}
	\end{equation}
	 We claim the following convolution estimate: for any $\varepsilon>0$,
	\begin{equation}\label{ineq: convolution}
		\sum_{n_1\in\Z^3}\dfrac{1}{\jbrac{n_1}^{2\alpha}\jbrac{n-n_1}^{2\beta}} \lesssim \jbrac{n}^{-\min\{2\alpha,2\beta,2\alpha+2\beta-3\}+\varepsilon}
	\end{equation}
	provided $\alpha+\beta>\frac{3}{2}$ and $\alpha, \beta\geq 0$. Hence (\ref{chaos estimate_bound}) is bounded by $p$ if $p$ is large and $\gamma<\min\{\alpha-\frac{3}{2},\beta-\frac{3}{2},\alpha+\beta-3\}$.
	
	To prove \eqref{ineq: convolution}, we follow the idea of \cite[Lemma 4.1]{MWX}. Split the index set of the summation into
	\begin{align*}
		\mathcal{A}_1 &:= \{n_1\in\Z^3 :\; |n_1|>2|n| \},\\
		\mathcal{A}_2 &:= \{n_1\in\Z^3 :\; |n_1|<\frac{1}{2}|n\},\\
		\mathcal{A}_3 &:= \{n_1\in\Z^3 :\; |n-n_1|<\frac{1}{2}|n|\},\\
		\mathcal{A}_4 &:= \Z^3\backslash\bigcup_{j=1}^3\mathcal{A}_j,
	\end{align*}
	and bound each parts separately. If $n_1\in\mathcal{A}_1$, then $|n-n_1|\geq |n_1|-|n| > \frac{1}{2}|n_1|$. Using $2\alpha+2\beta>3$, we have
	\begin{align*}
		\sum_{n_1\in\mathcal{A}_1}\dfrac{1}{\jbrac{n_1}^{2\alpha}\jbrac{n-n_1}^{2\beta}} \lesssim \sum_{n_1\in\Z^3} \dfrac{1}{\jbrac{n_1}^{2\alpha+2\beta}}\lesssim \jbrac{n}^{-(2\alpha+2\beta-3)}.
	\end{align*}
	For $n_1\in\mathcal{A}_2$, we have $|n-n_1|\geq |n|-|n_1|>\frac{1}{2}|n|$, then
	\begin{align*}
		\sum_{n_1\in\mathcal{A}_1}\dfrac{1}{\jbrac{n_1}^{2\alpha}\jbrac{n-n_1}^{2\beta}} \lesssim \dfrac{1}{\jbrac{n}^{2\beta}}\sum_{|n_1|<\frac{1}{2}|n|}\dfrac{1}{\jbrac{n_1}^{2\alpha}}\lesssim \jbrac{n}^{-2\beta -\min\{-\varepsilon, 2\alpha-3\}}.
	\end{align*}
	Here, $-\varepsilon$ is used to bound the logarithm factor when $2\alpha=3$. Similar bound holds for $\mathcal{A}_3$ by symmetry. Finally, if $n_1\in \mathcal{A}_4$, then $|n-n_1|\geq \frac{1}{2}|n|$ and $\frac{1}{2}|n|\leq |n_1| \leq 2|n|$. Therefore,
	\begin{align*}
		\sum_{n_1\in\mathcal{A}_4}\dfrac{1}{\jbrac{n_1}^{2\alpha}\jbrac{n-n_1}^{2\beta}} \lesssim \dfrac{1}{\jbrac{n}^{2\beta}}\sum_{\frac{1}{2}|n|\leq |n_1| \leq 2|n|}\dfrac{1}{\jbrac{n_1}^{2\alpha}} \lesssim \jbrac{n}^{-(2\alpha+2\beta-3)}
	\end{align*}
\end{proof}

To prove Proposition \ref{theorem:energy_derivative}, we only need to bound the second term on the RHS of (\ref{eqn:energy_derivative}). The other terms are the same as those in \cite[Proposition 5.1]{GOTW}. 

\begin{lemma}\label{high term lemma}
For any $\varepsilon>0$, we have	$\norm{kT_{u^{k-1}}u-u^k}_{B^{\alpha+\beta-\varepsilon}_{1,1}}\lesssim \norm{u}_{B^{\alpha}_{p,\infty}}\norm{u}_{B^{\beta}_{q,\infty}}\norm{u}_{L^{\infty}_x}^{k-2}$, provided $\alpha+\beta > 0$, and $\frac{1}{p}+\frac{1}{q}=1$.
\end{lemma}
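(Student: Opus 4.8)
The plan is to exploit the cancellation in the ``commutator-type'' expression $k T_{u^{k-1}} u - u^k$ by writing $u^k$ through the Bony paraproduct decomposition and isolating the piece that exactly matches $k T_{u^{k-1}} u$. Write $u^k = u \cdot u^{k-1}$ and decompose this product as
\[
u \cdot u^{k-1} = T_{u^{k-1}} u + T_{u} u^{k-1} + \Pi(u, u^{k-1}),
\]
where $\Pi$ denotes the resonant (high-high) part. The leading-order term $T_{u^{k-1}} u$ appears, but with coefficient $1$ rather than $k$; however, the point is that $k T_{u^{k-1}} u - u^k$ does \emph{not} contain the term $T_{u^{k-1}}u$ with a large coefficient in a way that costs derivatives --- rather, after subtracting, all remaining pieces have \emph{both} factors contributing to the regularity count. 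More precisely, I would further expand $u^{k-1}$: since $T_{u^{k-1}}u = \sum_j S_{j-1}(u^{k-1}) \mathbf{P}_j u$ and the low-frequency factor $S_{j-1}(u^{k-1})$ is essentially $u^{k-1}$ to leading order, the discrepancy between $k T_{u^{k-1}}u$ and $u^k$ reduces, after telescoping, to (a) the paraproduct $T_u u^{k-1}$ and resonant term $\Pi(u,u^{k-1})$, which are harmless because in $T_u u^{k-1}$ and $\Pi$ the \emph{second} factor $u^{k-1}$ carries positive regularity that can be distributed, and (b) lower-order commutator contributions where a derivative lands on the low-frequency factor. The key structural input is that in every surviving term, the $k-2$ ``extra'' copies of $u$ beyond the two that are being tracked at regularity $\alpha$ and $\beta$ can simply be placed in $L^\infty_x$ via the algebra/Leibniz estimates \eqref{EQU: Algebra property}, \eqref{EQU: Fractional Leibniz rule}, \eqref{EQU: Fracational Leibniz rule cor}, since $u \in C^{s-1/2-\varepsilon}$ with $s > 5/2$ is bounded.

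Concretely, the steps I would carry out are: (1) Reduce to the case $k=3$ conceptually — write $u^k = u^{k-2}\cdot u^2$ and use the fractional Leibniz rule \eqref{EQU: Fracational Leibniz rule cor} to peel off $\|u\|_{L^\infty_x}^{k-2}$, so that it suffices to analyze the bilinear object with two tracked factors at regularities $\alpha$ and $\beta$; (2) Apply Bony's decomposition to the remaining bilinear product and match $T_{u^{k-1}}u$; (3) Estimate $T_u u^{k-1}$ in $B^{\alpha+\beta-\varepsilon}_{1,1}$ using the standard paraproduct bound $\|T_f g\|_{B^{s+t}_{p,q}} \lesssim \|f\|_{B^t_{p_1,\infty}}\|g\|_{B^s_{p_2,q}}$ with $t<0$ (here $f=u$ contributes at regularity $\beta$, say, with $\beta$ possibly negative is not allowed so one takes the factor with smaller regularity index low), together with the Leibniz rule \eqref{EQU: Fractional Leibniz rule} applied to $u^{k-1}$; (4) Estimate $\Pi(u,u^{k-1})$ in $B^{\alpha+\beta-\varepsilon}_{1,1}$ using the resonant product estimate, which requires $\alpha+\beta>0$ — exactly the stated hypothesis; (5) Handle the leftover low-frequency commutator, $\sum_j (S_{j-1}(u^{k-1}) - u^{k-1})\mathbf{P}_j u$ type terms, which gain a derivative on the low factor and are therefore strictly better. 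Throughout, Bernstein's inequality \eqref{eqn: bernstein} converts between the $L^1_x$ norms in the target Besov space and the $L^{p}_x, L^q_x$ norms with $\tfrac1p+\tfrac1q=1$ appearing in the hypotheses.

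The main obstacle I anticipate is bookkeeping the paraproduct $T_{u^{k-1}}u$ itself: one must show that the $k$ factor out front is \emph{precisely} the multiplicity that appears in the top-order term of $u^k$, so that the subtraction is genuinely exact at the level of the most singular frequency interaction. This is the step where in \cite{OT, GOTW} an integration by parts was used when $s$ was an even integer; here it must be replaced by a careful expansion of the low-frequency factor $S_{j-1}(u^{k-1})$. Writing $u^{k-1} = \sum_{\text{all frequencies}}$ and tracking which frequency packets can combine with $\mathbf{P}_j u$ to produce output at frequency $\sim 2^j$, one sees that the ``diagonal'' contributions (where $k-1$ of the $k$ factors are all at frequency $\ll 2^j$) reconstruct $T_{u^{k-1}}u$, and the combinatorial count of how $k$ identical copies of $u$ distribute gives the factor $k$ — this is the analogue of the chain rule $\partial(u^k) = k u^{k-1}\partial u$ at the paraproduct level. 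Once this matching is pinned down, all remaining terms are non-resonant or resonant products with room to spare, and the estimate closes by the standard tools cited above with an $\varepsilon$ loss absorbed via \eqref{EQU: Intermediate embeddings}.

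Finally, I would note that the hypothesis $\alpha+\beta>0$ is used precisely to make sense of, and bound, the resonant product $\Pi(u^{\alpha\text{-part}}, u^{\beta\text{-part}})$ via \eqref{EQU: Multiplicative estimate}, and the constraint $\tfrac1p+\tfrac1q=1$ together with Bernstein lets us land the output in $B^{\alpha+\beta-\varepsilon}_{1,1}$, which is the space in which this lemma feeds back into the proof of Proposition \ref{theorem:energy_derivative} (after pairing against $D^s v_N$ via the duality \eqref{EQU: Duality} and Besov embedding).
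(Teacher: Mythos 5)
Your proposal identifies the right qualitative phenomenon (the factor $k$ must arise from a chain-rule-type count of frequency distributions) but the route you sketch does not actually exhibit the cancellation, and two of your reduction steps would fail as written.

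First, the Bony decomposition $u\cdot u^{k-1} = T_{u^{k-1}}u + T_u u^{k-1} + \Pi(u,u^{k-1})$ cannot be the starting point: after subtraction you are left with $(k-1)T_{u^{k-1}}u - T_u u^{k-1} - \Pi(u,u^{k-1})$, and the term $(k-1)T_{u^{k-1}}u$ is a genuine paraproduct with no gain — it lives only at the regularity of $u$, not at $\alpha+\beta$. You flag this as ``the main obstacle'' and suggest replacing it by an expansion of $S_{j-1}(u^{k-1})$, but you do not carry this out, and the resulting argument is then not the Bony decomposition at all. Second, the preliminary reduction in your step (1) — peeling off $\|u\|_{L^\infty}^{k-2}$ from $u^k$ via \eqref{EQU: Fracational Leibniz rule cor} so as to ``reduce to the bilinear case'' — does not apply to the quantity $kT_{u^{k-1}}u - u^k$, because the paraproduct term still carries the low-frequency factor $u^{k-1}$; the commutator structure does not factor through a bilinear object against which you could then compare.

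The paper avoids both problems by using a telescoping, multilinear decomposition of $u^k$ directly, rather than Bony:
\begin{equation*}
u^k = \sum_{j\ge -1}\bigl[(S_{j+1}u)^k - (S_j u)^k\bigr] = \sum_{j\ge -1}\mathbf{P}_j u\sum_{\ell=0}^{k-1}(S_{j+1}u)^\ell (S_j u)^{k-1-\ell}.
\end{equation*}
This is the discrete analogue of $\mathrm{d}(u^k)=ku^{k-1}\mathrm{d}u$ and yields the factor $k$ mechanically: there are exactly $k$ terms in the inner sum, each of which is compared against the corresponding block $S_{j-1}(u^{k-1})\mathbf{P}_j u$ of $T_{u^{k-1}}u$. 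The difference $S_{j-1}(u^{k-1})-(S_{j+1}u)^{k-1}$ (and its siblings) is then the only thing left to estimate, and it gains because both pieces approximate $u^{k-1}$ at low frequency. The $L^\infty$ peeling of the $k-2$ spectator copies of $u$ happens \emph{at that stage}, inside the estimate of $S_{j-1}(u^{k-1})-(S_{j+1}u)^{k-1}$ in $L^q_x$, after the telescoping has already exhibited the cancellation. I would encourage you to redo the argument starting from the telescoping identity; your intuition about where the factor $k$ comes from is correct, but Bony's trilinear decomposition is the wrong vehicle for it.
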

\begin{proof}
	A more general $\mathbb{R}^d$ version can be found in \cite[Theorem 2.92]{BDC}. For our case, note
	\begin{equation}
	\begin{split}
	u^k &= \sum_{j=-1}^{\infty} (S_{j+1}u)^k-(S_ju)^k\\
	&= \sum_{j=-1}^{\infty} \mathbf{P}_ju\sum_{\ell=0}^{k-1}(S_{j+1}u)^\ell(S_ju)^{k-\ell-1}.
	\end{split}
	\end{equation}
	It suffices to bound $T_{u^{k-1}}u-\sum_j \mathbf{P}_ju(S_{j+1}u)^{k-1}$, the other terms are similar. Since $\alpha+\beta>0$, we may assume $\beta>0$. Taking the $L^1_x$ norm, we have
\begin{align*}
	&\norm{\mathbf{P}_n\left(T_{u^{k-1}}u-\sum_j \mathbf{P}_ju(S_{j+1}u)^{k-1}\right)}_{L_x^1}\\
	=& \norm{\mathbf{P}_n\sum_{j\geq n-3}\mathbf{P}_ju\left(S_{j-1}u^{k-1}-(S_{j+1}u)^{k-1}\right)}_{L^1_x}\\
	\lesssim& \sum_{j\geq n-3}\norm{\mathbf{P}_ju}_{L^p_x}\norm{S_{j-1}(u^{k-1}-(S_{j-k}u)^{k-1})+\left((S_{j-k}u)^{k-1}-(S_{j+1}u)^{k-1}\right)}_{L^{q}_x}\\
	\lesssim& \sum_{j\geq n-3}\norm{\mathbf{P}_ju}_{L_x^p}\left(\norm{u^{k-1}-(S_{j-k}u)^{k-1}}_{L^q_x} + \norm{(S_{j-k}u)^{k-1}-(S_{j+1}u)^{k-1}}_{L^{q}_x}\right).
\end{align*}
Expressing $u^{k-1}-(S_{j-1}u)^{k-1}$ in terms of products of lower-order quantities, we find that the previous expression is bounded up to a constant factor by
\begin{align*}
& \sum_{j\geq n-3}\norm{\mathbf{P}_ju}_{L_x^p}\sum_{m\geq j-k}\norm{\mathbf{P}_mu}_{L^q_x}\sum_{\ell =0}^{k-2}\norm{S_{j-k}u}^{\ell}_{L^{\infty}_x}\left(\norm{u}_{L_x^{\infty}}^{k-\ell-2}+\norm{S_{j+1}u}_{L^{\infty}_x}^{k-\ell-2}\right)\\
	\lesssim& \sum_{j\geq n-3} 2^{-j\alpha}\norm{u}_{B^{\alpha}_{p,\infty}}\sum_{m\geq j-k}2^{-m\beta}\norm{u}_{B_{q,\infty}^{\beta}}\norm{u}_{L^{\infty}_x}^{k-2}\\
	\lesssim& 2^{-n(\alpha+\beta)}\norm{u}_{B^{\alpha}_{p,\infty}}\norm{u}_{B_{q,\infty}^{\beta}}\norm{u}_{L^{\infty}_x}^{k-2}.
	\end{align*}
\end{proof}

The next lemma allows us to replace $u^{k-1}D^su$ by $T_{u^{k-1}}D^su$.
\begin{lemma}\label{low and resonant term lemma}
	$\norm{T_ab-ab}_{B_{1,1}^{\alpha+\beta}} \lesssim \norm{a}_{B_{1,\infty}^{\alpha}}\norm{b}_{C^{\beta}}$, provided $\beta < 0$ and $\alpha+\beta>0$.
\end{lemma}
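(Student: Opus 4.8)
The plan is to prove the paraproduct decomposition estimate $\norm{T_ab-ab}_{B_{1,1}^{\alpha+\beta}} \lesssim \norm{a}_{B_{1,\infty}^{\alpha}}\norm{b}_{C^{\beta}}$ by passing through the standard Bony decomposition $ab = T_ab + T_ba + R(a,b)$, so that $T_ab - ab = -(T_ba + R(a,b))$, and estimating the ``high-low'' paraproduct $T_ba$ and the resonant term $R(a,b)$ separately in $B^{\alpha+\beta}_{1,1}$. The key spectral localization facts are: $\mathbf{P}_n(T_ba)$ is supported on $|\ell|\sim 2^n$, coming from the diagonal blocks $S_{j-1}b\,\mathbf{P}_j a$ with $j\sim n$; while $\mathbf{P}_n(R(a,b))$ receives contributions from all pairs of frequency-comparable blocks $\mathbf{P}_ja\,\mathbf{P}_{j'}b$ with $|j-j'|\le 1$ and $j\gtrsim n$, so the sum over high frequencies must be summable, which is exactly where $\alpha+\beta>0$ enters.

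First, for the high-low term $T_ba = \sum_j S_{j-1}b\,\mathbf{P}_j a$: each summand is frequency localized in an annulus $|\ell|\sim 2^j$, so $\mathbf{P}_n(T_ba)$ picks up only $O(1)$ values of $j\sim n$. By Hölder (with exponents $1$ and $\infty$, since $b\in C^\beta = B^\beta_{\infty,\infty}$) and the Littlewood-Paley characterizations, $\norm{S_{j-1}b\,\mathbf{P}_j a}_{L^1_x}\lesssim \norm{S_{j-1}b}_{L^\infty_x}\norm{\mathbf{P}_j a}_{L^1_x}$. Since $\beta<0$, we have $\norm{S_{j-1}b}_{L^\infty_x}\le \sum_{k\le j-2}\norm{\mathbf{P}_k b}_{L^\infty_x}\lesssim \sum_{k\le j-2}2^{-k\beta}\norm{b}_{C^\beta}\lesssim 2^{-j\beta}\norm{b}_{C^\beta}$, using that $-\beta>0$ makes the geometric series converge at its top end. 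Combined with $\norm{\mathbf{P}_j a}_{L^1_x}\lesssim 2^{-j\alpha}\norm{a}_{B^\alpha_{1,\infty}}$, we get $2^{n(\alpha+\beta)}\norm{\mathbf{P}_n(T_ba)}_{L^1_x}\lesssim \norm{a}_{B^\alpha_{1,\infty}}\norm{b}_{C^\beta}$, and summing over $n$ only loses a constant because of the frequency localization, giving the $B^{\alpha+\beta}_{1,1}$ bound.

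Next, for the resonant term $R(a,b) = \sum_{|j-j'|\le 1}\mathbf{P}_j a\,\mathbf{P}_{j'}b$: since $\mathbf{P}_j a\,\mathbf{P}_{j'}b$ has frequency support in a ball of radius $\sim 2^j$, only the blocks with $j\ge n - O(1)$ contribute to $\mathbf{P}_n$. Thus $\norm{\mathbf{P}_n R(a,b)}_{L^1_x}\lesssim \sum_{j\gtrsim n}\norm{\mathbf{P}_j a}_{L^1_x}\norm{\mathbf{P}_{j'}b}_{L^\infty_x}\lesssim \sum_{j\gtrsim n}2^{-j\alpha}2^{-j\beta}\norm{a}_{B^\alpha_{1,\infty}}\norm{b}_{C^\beta} \lesssim 2^{-n(\alpha+\beta)}\norm{a}_{B^\alpha_{1,\infty}}\norm{b}_{C^\beta}$, where the geometric sum converges precisely because $\alpha+\beta>0$. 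Multiplying by $2^{n(\alpha+\beta)}$ and summing over $n$ (the tail in $n$ is again geometric in $\alpha+\beta>0$) yields the $B^{\alpha+\beta}_{1,1}$ estimate. The main obstacle — really the only subtlety — is bookkeeping the two opposite-direction geometric summations (the $S_{j-1}b$ sum needs $\beta<0$; the resonant sum needs $\alpha+\beta>0$) and verifying the frequency-support claims carefully enough that the outer $\ell^1_n$ summation costs only a constant; a more general $\mathbb{R}^d$ version of all of this is in \cite[Chapter 2]{BDC}, which we cite for the routine details.
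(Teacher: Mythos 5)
Your strategy — decompose via Bony $ab = T_ab + T_ba + R(a,b)$ and estimate the high-low term $T_ba$ and the resonant term $R(a,b)$ separately — is the natural one, and the paper's own ``proof'' is merely the remark that the lemma is a straightforward application of the definitions, so your route is surely the intended one. Your pointwise-in-$n$ estimates are correct: you correctly use $\beta<0$ to obtain $\|S_{j-1}b\|_{L^\infty_x}\lesssim 2^{-j\beta}\|b\|_{C^\beta}$ for the paraproduct term, and $\alpha+\beta>0$ to sum the resonant tail $\sum_{j\gtrsim n}2^{-j(\alpha+\beta)}\lesssim 2^{-n(\alpha+\beta)}$.

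However, the final $\ell^1$ summation over $n$ is where the argument breaks down, in both places. What you have actually established is that $2^{n(\alpha+\beta)}\|\mathbf{P}_n(T_ba)\|_{L^1_x}$ and $2^{n(\alpha+\beta)}\|\mathbf{P}_n R(a,b)\|_{L^1_x}$ are each bounded \emph{uniformly in $n$} by $\|a\|_{B^\alpha_{1,\infty}}\|b\|_{C^\beta}$; that is, you have proved the $B^{\alpha+\beta}_{1,\infty}$ bound, not the stated $B^{\alpha+\beta}_{1,1}$ bound. Your closing claims — that ``summing over $n$ only loses a constant because of the frequency localization'' and that ``the tail in $n$ is again geometric in $\alpha+\beta>0$'' — are both false: frequency localization of the block $S_{j-1}b\,\mathbf{P}_j a$ controls which $j$ contribute to a fixed $n$, but places no constraint on the outer sum over $n$; and after multiplying by $2^{n(\alpha+\beta)}$ the resonant bound is a constant, not a decaying geometric. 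To recover genuine $\ell^1_n$ summability one either needs $a\in B^\alpha_{1,1}$ (then interchanging the $j$ and $n$ sums by Fubini produces $\sum_j 2^{j\alpha}\|\mathbf{P}_j a\|_{L^1_x}=\|a\|_{B^\alpha_{1,1}}$ on the right), or one must give up an $\varepsilon'$ of regularity via the embedding $B^{\alpha+\beta}_{1,\infty}\hookrightarrow B^{\alpha+\beta-\varepsilon'}_{1,1}$. In the downstream application the lemma is invoked with explicit $\varepsilon$-slack in both indices, so the $B^{\alpha+\beta}_{1,\infty}$ version that your calculation actually proves is all that is needed; but as a free-standing statement the $\ell^1$ conclusion does not follow from the hypotheses without one of these adjustments, and the claims you make to bridge that gap are incorrect as written.
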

\begin{proof}
	The proof is a straightforward application of the definition of the paraproduct and Besov spaces.
\end{proof}

The remaining difference $D^sT_{u^{k-1}}u-T_{u^{k-1}}D^su$ cannot be bounded directly. The following decomposition is the main result describing the regularity of this commutator:
\begin{lemma}\label{lemma:decomposition_lemma} Given $s>0$,
	$D^s(T_w u) - T_w(D^s u) = F_1 + F_2 + R$, where
	\begin{align*}
	F_1 =&s\sum_{j = 1}^3T_{\partial_j w}(\partial_j D^{s-2} u),\\
	F_2 =&\frac{s(s-2)}{2}\sum_{j=1}^3T_{\partial_j^2 w}(\partial_j^2 D^{s-4}u)+\frac{s}{2}T_{D^2w}D^{s-2}u.
	\end{align*}
	And for any $\beta \in \R$, $\rho < 3$, 
	\begin{align}
		\norm{R}_{B_{1,1}^{\beta + \rho}} \lesssim \norm{w}_{B_{1,\infty}^{{\rho}}}\norm{u}_{C^{s+\beta}}.
	\end{align}
\end{lemma}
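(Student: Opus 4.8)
\noindent\textbf{Proposed proof of Lemma~\ref{lemma:decomposition_lemma}.}

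The plan is to reduce $D^s(T_w u)-T_w(D^s u)$ to a dyadic sum of bilinear Fourier multipliers and to Taylor-expand the symbol $|\xi|^s$ in the low frequency. From the definition of the paraproduct,
\[
D^s(T_w u)-T_w(D^s u)=\sum_{j\ge -1}\Big[D^s\big(S_{j-1}w\cdot\mathbf{P}_j u\big)-S_{j-1}w\cdot D^s\mathbf{P}_j u\Big],
\]
and the $j$-th summand equals $\mathcal{B}_{m^{(j)}}(S_{j-1}w,\mathbf{P}_j u)$, where $\mathcal{B}_\mu(f,g):=\sum_{n_1,n_2}\mu(n_1,n_2)\widehat f(n_1)\widehat g(n_2)e^{i(n_1+n_2)\cdot x}$ and $m^{(j)}(n_1,n_2)=|n_1+n_2|^s-|n_2|^s$. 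The Littlewood--Paley cut-offs built into $S_{j-1}$ and $\mathbf{P}_j$ restrict the summation to $|n_1|\le\frac12|n_2|$ and $|n_2|\sim 2^j$, so that $|n_2+\theta n_1|\sim 2^j$ for every $\theta\in[0,1]$ and the $j$-th piece is spectrally localized to $|n|\sim 2^j$.

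On this support I would apply Taylor's formula to $f(\xi):=|\xi|^s$ about $\xi=n_2$ with increment $n_1$, keeping terms up to second order with integral remainder. Using $\nabla f(\xi)=s|\xi|^{s-2}\xi$ and $\nabla^2 f(\xi)=s(s-2)|\xi|^{s-4}\,\xi\otimes\xi+s|\xi|^{s-2}\,\mathrm{Id}$: the degree-zero term $|n_2|^s$ cancels exactly the already-subtracted $S_{j-1}w\cdot D^s\mathbf{P}_j u$; the degree-one term, of symbol $s\sum_\ell(n_1)_\ell(n_2)_\ell|n_2|^{s-2}$, reassembles across $j$ to $F_1=s\sum_{\ell=1}^3 T_{\partial_\ell w}(\partial_\ell D^{s-2}u)$; and the degree-two term reassembles to $F_2$ (the $\xi\otimes\xi$ part of $\nabla^2 f$ giving $\tfrac{s(s-2)}{2}\sum_{\ell,m}T_{\partial_\ell\partial_m w}(\partial_\ell\partial_m D^{s-4}u)$, of which the statement records the diagonal terms, the off-diagonal ones being of identical type, and the $\mathrm{Id}$ part giving $\tfrac s2 T_{D^2 w}(D^{s-2}u)$), all modulo the harmless sign conventions of the Fourier transform. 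What is left over is
\[
R=\sum_{j\ge -1}\mathcal{B}_{m^{(j)}_{\mathrm{rem}}}\big(S_{j-1}w,\mathbf{P}_j u\big),\qquad m^{(j)}_{\mathrm{rem}}(n_1,n_2)=\sum_{|\gamma|=3}(n_1)^\gamma\,\kappa^{(j)}_\gamma(n_1,n_2),
\]
with $\kappa^{(j)}_\gamma(n_1,n_2)=\frac{3}{\gamma!}\int_0^1(1-\theta)^2(\partial^\gamma f)(n_2+\theta n_1)\,d\theta$ (times the cut-offs).

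To bound $R$ I would argue $j$ by $j$. Since $\nabla^3 f$ is smooth away from the origin and homogeneous of degree $s-3$, while $|n_2+\theta n_1|\sim 2^j$ on the support, the rescaling $n_i\mapsto 2^{-j}n_i$ turns $\kappa^{(j)}_\gamma$ into $2^{j(s-3)}$ times a fixed smooth symbol supported in $\{|n_1|\lesssim 1,\ |n_2|\sim1\}$, uniformly in $\theta$; hence $\kappa^{(j)}_\gamma$ is a Coifman--Meyer-type multiplier whose bilinear convolution kernel $K^{(j)}_\gamma$ has $\|K^{(j)}_\gamma\|_{L^1}\lesssim 2^{j(s-3)}$ uniformly in $j$. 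Transferring the factor $(n_1)^\gamma$ onto the low-frequency input and using the elementary bound $\|\mathcal{B}_\mu(f,g)\|_{L^1}\le\|K_\mu\|_{L^1}\|f\|_{L^1}\|g\|_{L^\infty}$, one gets
\[
\big\|\mathcal{B}_{m^{(j)}_{\mathrm{rem}}}(S_{j-1}w,\mathbf{P}_j u)\big\|_{L^1}\lesssim 2^{j(s-3)}\,\|D^3 S_{j-1}w\|_{L^1}\,\|\mathbf{P}_j u\|_{L^\infty}.
\]
Then $\|D^3 S_{j-1}w\|_{L^1}\le\sum_{k\le j-2}2^{3k}\|\mathbf{P}_k w\|_{L^1}\le\|w\|_{B^\rho_{1,\infty}}\sum_{k\le j-2}2^{k(3-\rho)}\lesssim 2^{j(3-\rho)}\|w\|_{B^\rho_{1,\infty}}$, the geometric series being dominated by its top term precisely because $\rho<3$. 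As the $j$-th piece of $R$ lives at frequency $\sim 2^j$, summing against $2^{n(\beta+\rho)}$ gives
\[
\|R\|_{B^{\beta+\rho}_{1,\infty}}\lesssim\|w\|_{B^\rho_{1,\infty}}\sup_{j}2^{j(\beta+\rho)}2^{j(s-3)}2^{j(3-\rho)}\|\mathbf{P}_j u\|_{L^\infty}=\|w\|_{B^\rho_{1,\infty}}\|u\|_{C^{s+\beta}},
\]
and the stated $B^{\beta+\rho}_{1,1}$-bound follows upon replacing $\beta$ by $\beta-\eps$ and invoking the embedding $C^{s+\beta}\hookrightarrow B^{s+\beta-\eps}_{\infty,1}$, a loss that is immaterial for the application to Proposition~\ref{theorem:energy_derivative}.

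The main obstacle will be the kernel estimate in the previous paragraph: one must verify that the third-order Taylor remainder $\kappa^{(j)}_\gamma$, which couples both frequency variables through $n_2+\theta n_1$, is a bilinear multiplier with $2^{j(s-3)}$-normalized kernel uniformly in $j$ and in $\theta$. This rests on the strict separation $|n_1|\le\frac12|n_2|\sim 2^j$, which keeps $n_2+\theta n_1$ inside the annulus where $|\xi|^s$ is smooth and $\nabla^3|\xi|^s$ homogeneous, after which the estimate is a routine rescaling together with a standard Coifman--Meyer argument; a secondary point is the bookkeeping that matches the degree-one and degree-two parts of the expansion with the explicit $F_1$ and $F_2$, off-diagonal Hessian terms included. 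Finally, note that splitting off \emph{both} $F_1$ and $F_2$ is forced, not optional: the remainder deposits exactly three derivatives on the low-frequency factor $S_{j-1}w$, and three is the largest number that can be absorbed into $\|w\|_{B^\rho_{1,\infty}}$ under $\rho<3$ — retaining the second-order term in $R$ would restrict us to $\rho<2$.
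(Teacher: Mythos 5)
Your argument is correct and follows essentially the same route as the paper: second-order Taylor expansion of $|\xi|^s$ on the paraproduct support, homogeneity of degree $s-3$ for the remainder symbol, an $L^1$ kernel bound uniform in $j$ after rescaling (the paper makes the torus--to--$\mathbb{R}^6$ transfer explicit via Poisson summation, which you invoke implicitly through the Coifman--Meyer framework), and the $\rho<3$ condition to absorb $D^3 S_{j-1}w$ into $\|w\|_{B^\rho_{1,\infty}}$. Your observation that the lemma's $F_2$ records only the diagonal Hessian terms, and your remark that the $\ell^1$-in-$j$ endpoint requires a small $\varepsilon$ loss (also glossed over in the paper's own proof), are both correct and show careful reading.
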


\begin{remark}
	If we only expand once, then the reminder has the same bound but with more restricted $\rho$. More precisely, with the notations above, for $\rho <2$,
	\begin{align}\label{ineq: second expansion}
		\norm{F_2+R}_{B_{1,1}^{\beta + \rho}} \lesssim \norm{w}_{B_{1,\infty}^{{\rho}}}\norm{u}_{C^{s+\beta}},
	\end{align}
	and for $\rho<1$,
\end{remark}
\begin{proof}
	Let $m\in C^{\infty}(\mathbb{R})$ be a bump function such that $m=1$ on $[-\frac{1}{2},\frac{1}{2}]$ and is supported on $[-\frac{1}{2}-,\frac{1}{2}+]$. Then,
	\begin{align*}
	&(D^s(T_w u) - T_w(D^s u))(x)\\
	 =&\sum_{n_1,n_2\in\mathbb{Z}^3} (|n_1+n_2|^s-|n_2|^s)\widehat{w}(n_1)\widehat{u}(n_2)m(\frac{|n_1|}{|n_2|})e^{i (n_1+n_2)\cdot x}.
	\end{align*}
	By Taylor's theorem, for $n_2\neq 0$\footnote{If $n_2=0$, then on the support of $m(\frac{|n_1|}{|n_2|})$, we have $n_1=0$. That why we don't need to estimate $\mathbf{P}_{-1}R$.},
	\begin{align*}
	&|n_1+n_2|^s-|n_2|^s\\
	=&~s|n_2|^{s-1}n_2\cdot n_1 +\frac{1}{2}\left(s(s-2)|n_2|^{s-4}(n_2\cdot n_1)^2 + s|n_2|^{s-2}|n_1|^2\right)+ R_1
	\end{align*}
	where the first 2 terms correspond to $F_1$ and $F_2$, and
	\begin{align*}
	R_1(n_1,n_2):=\dfrac{s(s-2)}{2} \int_0^1 & (1-t)^2\Big( 3|n_2+tn_1|^{s-4}(n_2+tn_1)\cdot n_1 |n_1|^2\\
	&+ (s-4)|n_2+tn_1|^{s-6}((n_2+tn_1)\cdot n_1)^3\Big)\mathrm{d}t.
	\end{align*}
	
Define $R$ by
\[R= \sum_{n_1,n_2\in \mathbb{Z}^3} R_1(n_1,n_2)\widehat{u}(n_1)\widehat{w}(n_2)e^{i(n_1+n_2)\cdot x}.\]
We now estimate $R$. First, we can write $R_1 =\sum_{|\alpha|=3}C_{\alpha}(n_1,n_2)n_1^{\alpha}$, where $\alpha$ is a 3d multi-index and $C_{\alpha}$ can be extended by homogeneity to a function on $\mathbb{R}^6$ such that 
\[C_{\alpha}(\lambda\xi_1,\lambda\xi_2) = \lambda^{s-3}C_{\alpha}(\xi_1,\xi_2)\] for any $\lambda>0$, and is smooth on the support of $m(\frac{|\xi_1|}{|\xi_2|})\psi_j(\xi_1+\xi_2)$ for any $j\ge 0$.
	
	To bound $\norm{\mathbf{P}_jR}_{L^1_x}$, set 
	\[K_{\alpha,j}(\xi_1,\xi_2) := \psi_j(\xi_1+\xi_2)m(\frac{|\xi_1|}{|\xi_2|})C_{\alpha}(\xi_1,\xi_2),\] 
	and define $h_j\in L^2(\mathbb{T}^3\times\mathbb{T}^3)$ and $H_j\in L^2(\mathbb{R}^3\times\mathbb{R}^3)$ by
	\begin{align*}
	h_{\alpha,j}(y,z):=&\sum_{|\alpha|=3}(\mathcal{F}_{\mathbb{T}^6}^{-1}K_{\alpha,j})(y,z),\\
	H_{\alpha,j}(y,z):=&\sum_{|\alpha|=3}(\mathcal{F}_{\mathbb{R}^6}^{-1}K_{\alpha,j})(y,z).
	\end{align*}
	By the Poisson summation formula, 
	\[h_{\alpha,j}(y,z) = \sum_{(m_1, m_2)\in \mathbb{Z}^6}H_{\alpha,j}(y+m_1,z+m_2).\] 
	Hence,
	\begin{align*}
	\norm{h_{\alpha,j}}_{L^1(\mathbb{T}^6)}&\leq \norm{H_{\alpha,j}}_{L^1(\mathbb{R}^6)}\\
	 &= \norm{2^{j(s-3)}\cdot2^{6j}H_{\alpha,0}(2^j\cdot, 2^j\cdot)}_{L^1(\mathbb{R}^6)}\\
	 &= 2^{j(s-3)}\norm{H_{\alpha,0}}_{L^1(\mathbb{R}^6)}.
	\end{align*}
	Note $\norm{H_{\alpha,0}}_{L^1}$ is bounded since $K_{\alpha,0}\in C^{\infty}_0(\mathbb{R}^6)$. Then,
	\begin{align*}
	\norm{\mathbf{P}_j R}_{L_x^1}
	&= \norm{\sum_{n_1, n_2}\psi_j(n_1+n_2)m(\frac{|n_1|}{|n_2|})\sum_{|\alpha|=3}C_{\alpha}(n_1,n_2)n_1^{\alpha}\widehat{w}(n_1)\widehat{u}(n_2)e^{ i(n_1+n_2)\cdot x}}_{L^1_x}\\
	&=\norm{\sum_{|\alpha|=3}\sum_{n_1, n_2}K_{\alpha,j}(n_1,n_2)\widehat{S_{j-1}\partial^{\alpha}w}(n_1)\widehat{\widetilde{\mathbf{P}}_ju}(n_2)e^{ i(n_1+n_2)\cdot x}}_{L^1_x}\\
	&=\norm{\sum_{|\alpha|=3}\iint h_{\alpha,j}(y,z)S_{j-1}\partial^{\alpha}w(x-y)\widetilde{\mathbf{P}}_ju(x-z)dydz}_{L^1_x}\\
	&\leq \sum_{|\alpha|=3}\norm{h_{\alpha,j}}_{L^1_{y,z}}\norm{S_{j-1}\partial^{\alpha}w}_{L^1_x}\norm{\widetilde{\mathbf{P}}_ju}_{L^{\infty}_x}\\
	&\lesssim \sum_{|\alpha|=3}2^{j(s-3)}2^{j(3-\rho)}\norm{\partial^{\alpha}w}_{B^{\rho-3}_{1,\infty}}\cdot 2^{-j(s+\beta)}\norm{ u}_{C^{s+\beta}}\\
	&\lesssim 2^{-j(\rho+\beta)}\norm{w}_{B^{\rho}_{1,\infty}}\norm{u}_{C^{s+\beta}}.
	\end{align*}
	Here $\widetilde{\mathbf{P}}_j$ is another Littlewood-Paley projector such that $\widetilde{\mathbf{P}}_j\mathbf{P}_j = \mathbf{P}_j$.
\end{proof}
We can now give the proof of the energy estimate \eqref{ineq:energy_derivative}.
\begin{proof}[Proof of Proposition \ref{theorem:energy_derivative}]
We first write
	\begin{align*}
		&\int_{\T^3}  D^sv(ku^{k-1}D^su-D^su^k) \\
		= &\int_{\T^3}  D^sv \Big[ (ku^{k-1}D^su-kT_{u^{k-1}}D^su)-k(D^s(T_wu)-T_w(D^su))+(kD^sT_{u^{k-1}}u-D^su^k)\Big].
\end{align*}

Using Lemma \ref{low and resonant term lemma}, \ref{lemma:decomposition_lemma}, and \ref{high term lemma}, we estimate the last quantity by
\begin{align*}&\norm{D^sv}_{C^{-\frac{3}{2}-\varepsilon}}\Big( k\norm{u^{k-1}D^su-T_{u^{k-1}}D^su}_{B_{1,1}^{\frac{3}{2}+\varepsilon}} +k\norm{R_1}_{B_{1,1}^{\frac{3}{2}+\varepsilon}}+\norm{kD^sT_{u^{k-1}}u-D^su^k}_{B_{1,1}^{\frac{3}{2}+\varepsilon}}\Big)\\
		&+\left|\int D^sv(F_1+F_2)\right|\\
		\lesssim& \norm{v}_{C^{s-\frac{3}{2}-\varepsilon}}\norm{u}_{C^{s-\frac{1}{2}-\varepsilon}}\norm{u}_{B_{1,\infty}^{2+2\varepsilon}}\norm{u}^{k-2}_{L_x^{\infty}}+\left|\int D^sv(F_1+F_2)\right|\\
		\lesssim& \norm{v}_{C^{s-\frac{3}{2}-\varepsilon}}\norm{u}_{C^{s-\frac{1}{2}-\varepsilon}}\norm{u}_{H^{\sigma}}^{k-1}+\left|\int D^sv(F_1+F_2)\right|.
		\end{align*}
	Here, $R_1, F_1, F_2$ are the terms in Lemma \ref{lemma:decomposition_lemma}. We used (\ref{EQU: Intermediate embeddings}) and (\ref{EQU: Besov embedding}) in the last step.
	
	It remains to deal with $\int D^sv F_1$ and $\int D^sv F_2$. To simplify the notation, in the remaining part of the proof, we use $D$ to represent both $D$ and $\partial$, and fix $\alpha = \frac{3}{2}$. Since $D^{\alpha}$ is self-adjoint, we can write the first integral as
	\begin{align*}
		\int_{\T^3}D^svF_1 = \int_{\T^3}D^{s-\alpha}v(T_{Du^{k-1}}D^{s-1+\alpha}u) + \int_{\T^3}D^{s-\alpha}v[D^{\alpha},T_{Du^{k-1}}]D^{s-1}u =: I_1 + I_2
	\end{align*}
	For $I_1$, since $Du^{k-1}$ has positive regularity(for $s>\frac{5}{2}$), Lemma \ref{low and resonant term lemma} allows us to treat the paraproduct as a real product, which can be bounded by
	\begin{align*}
		I_1 \lesssim \norm{D^{s-\alpha}vD^{s-1+\alpha}u}_{C^{-1-\varepsilon}}\norm{Du^{k-1}}_{B^{1+\varepsilon}_{1,\infty}} \lesssim \norm{D^{s-\alpha}vD^{s-1+\alpha}u}_{C^{-1-\varepsilon}}\norm{u}^{k-1}_{H^{\sigma}}
	\end{align*}
	Moving $D^{\alpha}$ from $v$ to $u$ and using \eqref{ineq: second expansion}, we can decompose $I_2$ into
	\begin{align}\label{ineq: I2}
		I_2 =& \int_{\T^3} D^{s-\alpha}vT_{D^2u^{k-1}}D^{s-2+\alpha}u + \int_{\T^3}D^{s-\alpha}v\widetilde{R},
	\end{align}
	where\footnote{Here we choose $\rho = 1+2\varepsilon$ and $\beta = -1$}
	\begin{align*}
		\norm{\widetilde{R}}_{B_{1,1}^{\frac{3}{2}-\alpha+\varepsilon}}\lesssim\norm{Du^{k-1}}_{B^{1+2\varepsilon}_{1,\infty}}\norm{D^{s-1}u}_{C^{\frac{3}{2}-\varepsilon}} \lesssim \norm{u}_{H^{\sigma}}^{k-1}\norm{u}_{C^{s-\frac{1}{2}-\varepsilon}}.
	\end{align*}
	The first integral in \eqref{ineq: I2}, by Lemma \ref{low and resonant term lemma}, can be treated a real product(for $s>\frac{5}{2}$), then
	\begin{align*}
		\int_{\T^3}D^{s-\alpha}vD^2u^{k-1}D^{s-2+\alpha}u
		&\lesssim\norm{D^{s-\alpha}vD^{s-2+\alpha}u}_{C^{-\varepsilon}}\norm{D^2u^{k-1}}_{B_{1,\infty}^{\varepsilon}}\\
		&\lesssim\norm{D^{s-\alpha}vD^{s-2+\alpha}u}_{C^{-\varepsilon}}\norm{u}_{H^{\sigma}}^{k-1}
	\end{align*}
	For $\int D^svF_2$, note its terms have form
	\begin{align*}
		\int_{\T^3}D^svT_{D^2u^{k-1}}D^{s-2}u = \int_{\T^3} D^{s-\alpha}vT_{D^2u^{k-1}}D^{s-2+\alpha}u + \int_{\T^3}D^{s-\alpha}v\widetilde{R},
	\end{align*}
	where $\tilde{R}$ satisfies the same bound as one in \eqref{ineq: I2}. And it can be treated by exactly the same way as $I_2$.
\end{proof}

\section{Construction of the measure}
In this section, we construct a measure $\vec{\rho}_s$ which is absolutely continuous with respect to $\mu_s$ and corresponds to the formal expression:
\begin{equation}
    d\vec{\rho_{s}}=\mathcal{Z}_{s}^{-1} e^{-\mathscr{E}_{s}(u,v)-E^q(u,v)}dudv.
\end{equation}
Here $\mathscr{E}_s(u,v)$ is the renormalized energy defined in \eqref{eqn: R-energy}, $E(u,v)$ is the Hamiltonian energy \eqref{eqn: H}, and $q=q(s,k)$ is a large integer to be chosen later. 

Define the truncated measures
\begin{align}\label{eqn: truncated-measure}
    \begin{split}
        d\vec{\rho}_{s,N} &= \mathcal{Z}_{s,N,r}^{-1} e^{-\mathscr{E}_{s,N}(u,v)-E^q_N(u,v)}dudv \\
        &= \mathcal{Z}_{s,N}^{-1}e^{-R_{k,s,N}(u)-E^q_N(u,v)}d\nu_s(u,v),
    \end{split}
\end{align}
where the truncated energy $E_N(u,v)$ is defined by
\begin{equation}\label{eqn: truncated-energy}
E_N(u,v):=E(u_N,v_N)=\frac{1}{2}\int_{\mathbb{T}^3}(|\nabla \pi_N u|^2+(\pi_N v)^2)\,\mathrm{d}x+\frac{1}{k+1}\int_{\mathbb{T}^3}(\pi_N u)^{k+1}\,\mathrm{d}x.
\end{equation}

In this section, we prove Proposition \ref{lem: Lp-convergence}, which asserts that the measures $\vec{\rho}_{s,N}$ converge to a limiting measure as $N\rightarrow\infty$.

The general method to establish convergence of the measures is standard (see for example \cite[Remark 3.8]{T2}), and consists of two steps, corresponding Lemma \ref{lem: Lp-convergence} and Proposition \ref{lem: Lp-bound}, respectively.
\begin{enumerate}
\item Convergence of $R_{k,s,N}(u)$ and $E^q_N(u,v)$ in $L^p$. This is a consequence of the regularity properties of the field $\vec{u}$ on the support of $\vec{\mu}_s$, since $R_{k,s,N}(u)$ and $E^q_N(u,v)$ are continuous functions of the Fourier truncated field $\pi_N\vec{u}$.
\item Uniform integrability of $e^{-R_{k,s,N}(u)-E^q_N(u,v)}$ with respect to $\vec{\nu}_s$. This will follow from a uniform bound in $L^p$, $p>1$. It is here that we make use of the variational representation of
\begin{equation}\label{eqn: partition}
\mathcal{Z}_{s,N}:=\mathbb{E}_{\vec{\nu}_s}[e^{-R_{s,k,N}(u)-E^q_N(u,v)}].
\end{equation}
\end{enumerate}
Indeed, the uniform integrability resulting from the second point allows us to take the limit in the expectation
\[\mathbb{E}_{\vec{\nu}_s}[e^{-R_{k,s,N}(u)-E^q_N(u,v)}],\]
which is sufficient to define $\vec{\rho}_s$ as a measure.

Compared to the cubic case, $k=3$, in \cite{GOTW}, the addition of $-E^q(u,v)$ makes the construction of the measure easier as it introduces more decay. Also, as the energy is conserved we have
\[\frac{\mathrm{d}}{\mathrm{d}t}E^q_N(u_N,v_N)=0.\] 
Consequently, no extra terms appear in the energy estimate in section 3.

\begin{definition}
  For $u$ given by \eqref{eqn: useries}, we define
  \[:(D^s u_N)^2: = (D^su_N)^2-\mathbb{E}_{\vec{\nu}_s}[(D^s u_N)^2].\]
This notation is inspired by an analogy with Wick ordering in Gaussian analysis and quantum field theory (see \cite[Chapter 3]{janson}).
\end{definition}

\begin{lemma}\label{lem: Lp-convergence}
Let $s>\frac{3}{2} $. Set 
\begin{equation}\label{eqn: Fskn}
F_{s,k,N}(u,v) := -R_{k,s,N}(u)-E^q_N(u,v).
\end{equation}
Then for $p<\infty$, $F_{s,k,N}$ converges to some $F_{s,k}$ in $L^p(\nu_s)$ as $N\rightarrow \infty$. 
\end{lemma}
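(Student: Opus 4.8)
The plan is to prove $L^p$ convergence of $F_{s,k,N} = -R_{k,s,N}(u) - E^q_N(u,v)$ by establishing that each piece is Cauchy in $L^p(\vec\nu_s)$. Since $F_{s,k,N}$ is a polynomial in the Fourier coefficients of the truncated field, it lies in a finite sum of Wiener chaoses, so by hypercontractivity \eqref{eqn: wiener-chaos} it suffices to prove convergence in $L^2(\vec\nu_s)$ (the $L^p$ norms are controlled by $L^2$ norms up to a constant $(p-1)^{k/2}$ depending only on the chaos order, which is bounded by roughly $(k+1)q$). So the whole lemma reduces to $L^2$ estimates.

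First I would split $F_{s,k,N}$ according to \eqref{eqn: RskN} into the renormalized term $\frac{k}{2}\int Q_{s,N}(u_N)u_N^{k-1}$, the potential term $\frac{1}{k+1}\int u_N^{k+1}$, and the energy power $E^q_N(u,v)$. For the latter two, the point is that $u_N \to u$ and $v_N \to v$ in (for example) $C^{s - 1/2 - \varepsilon}$ and $C^{s-3/2-\varepsilon}$ almost surely and in every $L^p$, by Lemma \ref{lemma:chaos_estimate} applied to the differences $u - u_N = (\mathrm{id} - \pi_N)u$ together with the frequency-localization gain; since $\int u_N^{k+1}$ and $E^q_N$ are continuous functions of $(u_N, v_N)$ in these norms (using the algebra property \eqref{EQU: Algebra property} and Hölder), convergence follows, with uniform-in-$N$ bounds on the relevant moments coming again from Lemma \ref{lemma:chaos_estimate}. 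The only genuinely delicate term is $\int Q_{s,N}(u_N) u_N^{k-1}$, because $Q_{s,N}(u_N) = (D^s u_N)^2 - \sigma_N$ involves the Wick renormalization and $(D^s u_N)^2$ itself diverges.

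For the renormalized term I would write, for $M > N$,
\[
\int Q_{s,M}(u_M)u_M^{k-1} - \int Q_{s,N}(u_N)u_N^{k-1}
= \int \big(Q_{s,M}(u_M) - Q_{s,N}(u_N)\big)u_M^{k-1} + \int Q_{s,N}(u_N)\big(u_M^{k-1} - u_N^{k-1}\big),
\]
and estimate each in $L^2(\vec\nu_s)$. The second integral is handled by pairing $Q_{s,N}(u_N) \in C^{-1-\varepsilon}$ (with uniform-in-$N$ moment bounds, proved by the same dyadic-plus-convolution computation as in Lemma \ref{lemma:chaos_estimate}, since $Q_{s,N}(u_N)$ is a mean-zero element of the second chaos and its dyadic blocks have variance $\sum_{n \sim 2^j}\sum_{n_1}\langle n_1\rangle^{-2s}\langle n - n_1\rangle^{-2s} \cdot |n|^{?}$-type sums that converge for $s > 3/2$) against $u_M^{k-1} - u_N^{k-1} \to 0$ in $C^{1+2\varepsilon}$ via the algebra property and the convergence $u_M - u_N \to 0$; this needs $s - 1/2 - \varepsilon > 1 + 2\varepsilon$, i.e.\ $s > 3/2$, consistent with the hypothesis. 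The first integral requires showing $Q_{s,M}(u_M) - Q_{s,N}(u_N) \to 0$ in $C^{-1-\varepsilon}$: expanding $(D^s u_M)^2 - (D^s u_N)^2 = (D^s u_M - D^s u_N)(D^s u_M + D^s u_N)$ and subtracting the corresponding expectations $\sigma_M - \sigma_N$, one estimates the $L^2(\vec\nu_s)$-norm of each Littlewood–Paley block $\mathbf P_j(\cdots)$ by the second-moment computation, getting a bound $2^{j(2s - 2s)}\cdot(\text{summable tail in }N)$ — the divergent parts $\sigma_M, \sigma_N$ cancel the diagonal contributions, leaving a convergent sum precisely because $s > 3/2$.

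The main obstacle is this last point: controlling $Q_{s,M}(u_M) - Q_{s,N}(u_N)$ in a negative Hölder space uniformly, which is where the Wick subtraction is essential — one must verify that after removing $\sigma_N$ the remaining second-chaos object has dyadic variances summing to something finite, and that the difference in $N$ and $M$ produces a genuinely small (not merely bounded) quantity. This is the standard renormalization estimate behind the construction of the measure in \cite{OT, GOTW}, and the bookkeeping — separating the $n_1 = n - n_1$ near-diagonal region that $\sigma_N$ is designed to cancel from the off-diagonal region that is already summable — is the heart of the proof. Everything else is routine: hypercontractivity to pass from $L^2$ to $L^p$, the algebra and paraproduct estimates from the Basic Estimates section to handle the polynomial factors $u_N^{k-1}$ and $u_N^{k+1}$, and Hölder plus Lemma \ref{lemma:chaos_estimate} to handle $E^q_N$.
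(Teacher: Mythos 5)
Your approach matches the paper's: both reduce to uniform bounds on $:(D^s u_N)^2:$ in $L^p(\Omega;C^{-1-\varepsilon})$ (the paper simply cites \cite[Proposition 4.3]{GOTW}), pair against the polynomial factor via the multiplicative estimate \eqref{EQU: Multiplicative estimate}, control $\int u_N^{k+1}$ and $E_N^q$ through the H\"older regularity $u\in C^{s-\frac12-\varepsilon}$, $v\in C^{s-\frac32-\varepsilon}$ from Lemma~\ref{lemma:chaos_estimate}, and close in $L^p$ by hypercontractivity since everything lives in a fixed finite chaos. Your Cauchy argument for the Wick term is a more explicit version of what the paper asserts by citation, and the exponent $\langle n_1\rangle^{-2s}\langle n-n_1\rangle^{-2s}$ in your aside should read $\langle n_1\rangle^{-2}\langle n-n_1\rangle^{-2}$ because $\widehat{D^s u}(n)\sim g_n/\langle n\rangle$, but this is a slip rather than a gap.
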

\begin{proof}
  Since $s>\frac{3}{2}$ we have by \eqref{eqn: u-C-estimate}, \eqref{eqn: v-C-estimate} that $u\in L^p(\Omega,C^{1+\varepsilon}(\mathbb{T}^3))$ and $v\in L^p(\Omega,C^{\varepsilon}(\mathbb{T}^3))$ for some $\varepsilon>0$. These bounds imply that $E_N(u,v)<\infty$ and moreover is bounded in $L^p$ for any $0<p<\infty$, uniformly in $N$. The same holds for $E_N^q$.

  As in \cite[Proposition 4.3]{GOTW}, we have, for any $p\ge 2$:
  \[\|:(D^s u_N)^2:\|_{L^p(\Omega,C^{-1-\varepsilon}(\mathbb{T}^3))}\le Cp.\]
By duality in $C^\alpha$ spaces \eqref{EQU: Multiplicative estimate}, the term
\[\int Q_{s,N}(u_N) u_N^{k-1}\]
converges in $L^p$.
\end{proof}



\subsection{Variational formulation}
In this section, we apply the Barashkov-Gubinelli variational approach to obtain uniform in $N$ control over the quantity $e^{-R_{k,s,N}(u)-E^q_N(u,v)}$. This is equivalent to showing that the partition function is uniformly bounded, since higher $L^p$ norms of $e^{-R_{k,s,N}(u)-E^q_N(u,v)}$ introduce only constant factors in the representation \eqref{eqn: variational}.

This approach was first applied in \cite{GOTW}. The idea is to write the partition function as an optimization over time-dependent processes, so we begin by representing the measure $\vec{\nu}_s$ as the time 1 distribution of a pair of cylindrical processes. We refer to \cite{BG,GOTW} for more details. 

Let $\Omega=C(\mathbb{R}_+, C^{-\frac{3}{2}-\varepsilon}(\mathbb{T}^3))$. Let $B_n^1(t)$, $B_n^2(t)$, $t\ge 0$ be two sequences of independent standard Brownian motions. We define
\begin{equation}
    \vec{X}(t)=(X_1(t),X_2(t))=\left(\sum\limits_{n\in\Z^3} B^{n}_1(t)e^{i n\cdot x}, \sum\limits_{n\in\Z^3} B^{n}_2(t) e^{i n\cdot x}\right)
\end{equation}
so that $\vec{X}(t)$ is a Brownian motion on $L^2(\T^3)\times L^2(\T^3)$. Set $\vec{Y}(t)=(Y^1(t),Y^2(t))$ where
\begin{equation}
    Y_1(t) = \mathcal{J}^{-s-1}X_1(t) :=  B_1^{0}(t)+\sum\limits_{n\in\Z^3\backslash\{0\}} \frac{B^{n}_1(t)}{(|n|^2+|n|^{2s}+2)^\frac{1}{2}} e^{i n\cdot x}
\end{equation}
and
\begin{equation}
    Y_2(t) = J^{-s}X_2(t) :=  \sum\limits_{n\in\Z^3} \frac{B^{n}_2(t)}{(1+|n|^{2s})^\frac{1}{2}} e^{i n\cdot x}.
\end{equation}
Note that 
\[\textnormal{Law}(\vec{Y}(1)):=\vec{\nu}_s.\]

We let $\mathbb{H}_a$ be the set of progressively measurable processes belonging to 
\[L^2([0,1], L^2(\T^3)\times L^2(\T^3))\] 
almost surely. For $\theta \in \mathbb{H}_a$, the classical Girsanov theorem \cite[Section 5.5]{legall} describes the semimartingale decomposition of $\vec{X}(t)$ and $\vec{Y}(t)$ with respect to the measure $\mathbb{Q}^\theta$ defined by its relative density
\begin{equation}\label{eqn: com}
  \frac{\mathrm{d}\mathbb{Q}^\theta}{\mathrm{d}\mathbb{P}}=e^{\int_0^1 \langle \theta_t,\mathrm{d}X_t\rangle -\frac{1}{2}\int_0^1 \|\theta_s\|^2_{L^2_x}\,\mathrm{d}s}.
  \end{equation}
We have the decompositions
\begin{equation}
    \vec{X}(t)=\vec{X}^\theta(t)+\int_0^t\vec{\theta}(t)\mathrm{d}t
\end{equation}
and 
\begin{equation}
    \vec{Y}(t)=(Y^\theta_1(t), Y^\theta_2(t))+\int_0^t (\mathcal{J}^{-s-1}\theta_1, J^{-s}\theta_2)(t')\,\mathrm{d}t', 
\end{equation}
where $\vec{X}^\theta$ is a $\mathbb{Q}^\theta$ $L^2$-cylindrical Brownian motion and  
\[Y_1^\theta(t):= \mathcal{J}^{-s-1}X_1^\theta(t), \quad Y_2^\theta(t):=J^{-s} X_2^\theta(t).\]
For convenience, we set
\begin{equation}
    \vec{I}^\theta(t):=(I_1^\theta(t), I_2^\theta(t)) = \int_0^t (\mathcal{J}^{-s-1}\theta_1, J^{-s}\theta_2)(t')\,\mathrm{d}t'
\end{equation}
and $\vec{Y}^\theta(t):=(Y^\theta_1(t), Y^\theta_2(t))$. With this notation in place we have the following variational formula for $\mathcal{Z}_{s,N}$.

\begin{lemma}\label{lem: relative-entropy}
Let $\theta\in \mathbb{H}_a$, $N\ge 1$ and let $\mathbb{Q}^\theta$ be the measure defined by \eqref{eqn: com}. Then the \emph{relative entropy} of $\mathbb{Q}^\theta$ with respect to $\mathbb{P}$ is finite:
\[H(\mathbb{Q}^\theta\mid \mathbb{P})=\mathbb{E}\left[\frac{\mathrm{d}\mathbb{Q}^\theta}{\mathrm{d}\mathbb{P}}\log \frac{\mathrm{d}\mathbb{Q}^\theta}{\mathrm{d}\mathbb{P}}\right]<\infty.\]
In particular,
\begin{equation}\label{eqn: quad-finite}
\mathbb{E}_{\mathbb{Q}^\theta}\bigg[\int_0^t \|\theta\|_{L^2_x}\mathrm{d}s\bigg]<\infty.
\end{equation}
\begin{proof}
Once we prove the finiteness of the relative entropy, the bound \eqref{eqn: quad-finite} follows from the inequality \cite[Lemma 2.6]{follmer},
\[\mathbb{E}_{\mathbb{Q}^\theta}\bigg[\int_0^t \|\theta\|_{L^2_x}\mathrm{d}s\bigg]\le 2H(\mathbb{Q}^\theta\mid \mathbb{P}).\]

We turn to the relative entropy. In our case, it takes the following explicit form:
\[H(\mathbb{Q}^\theta\mid \mathbb{P})=\mathbb{E}\left[\frac{e^{-R_{s,k,N}-E^q_N}}{\mathcal{Z}_{s,N}}\log \frac{e^{-R_{s,k,N}-E^q_N}}{\mathcal{Z}_{s,N}}\right].\]
For the partition function $\mathcal{Z}_{s,N}$, we have by Jensen's inequality:
\begin{align}
\mathcal{Z}_{s,N}&=\mathbb{E}[e^{-R_{s,k,N}}e^{-E^q_N}] \nonumber\\
&\ge e^{-\mathbb{E}[R_{s,k,N}]-\mathbb{E}[E^q_N]} \nonumber\\
&\ge c(N). \label{eqn: Z-lower}
\end{align}
In the final step, we have used the integrability of $R_{s,k,N}$ and $E^q_N$, which follows directly from \eqref{eqn: u-C-estimate}, \eqref{eqn: v-C-estimate} since $B^\alpha_{\infty,\infty}\subset L^\infty$ when $\alpha>0$.

Using H\"older's inequality and Young's inequality, it is easy to see that for $q\ge 1$,
\begin{align*}
R_{s,k,N}(Y_1)+E_N^q(\vec{Y})&= \frac{k}{2}\int_{\mathbb{T}} ((D^s  Y_1)^2-\sigma_N^2)(\pi_N Y_1)^{k-1}\,\mathrm{d}x +E_N^q(\vec{Y})\\
&\ge -\frac{k}{2}\sigma_N^2 \int (\pi_N Y_1)^{k-1}\,\mathrm{d}x+ \frac{1}{(k+1)^q}\left(\int (\pi_N Y_1)^{k+1}\,\mathrm{d}x\right)^q\\
&\ge -\frac{k}{2}\sigma_N^2|\mathbb{T}^3|^{\frac{2}{k+1}}\left( \int (\pi_N Y_1)^{k+1}  \right)^\frac{k-1}{k+1} + \frac{1}{(k+1)^q}\left(\int (\pi_N Y_1)^{k+1}\,\mathrm{d}x\right)^q\\
&\ge -\left(\frac{k}{2\varepsilon}\right)^r \frac{ |\mathbb{T}^3|^{\frac{2r}{k+1}}}{r}  \sigma_N^{2r}+(\frac{1}{4}-\frac{\varepsilon^q}{r})\left(\int (\pi_N Y_1)^{k+1}\,\mathrm{d}x\right)^q\\
&> -\infty,
\end{align*}
where $\frac{k-1}{q(k+1)}+\frac{1}{r}=1$. Using \eqref{eqn: Z-lower} and Cauchy-Schwarz, we now have
\[\mathbb{E}\left[\frac{\mathrm{d}\mathbb{Q}^\theta}{\mathrm{d}\mathbb{P}}\log \frac{\mathrm{d}\mathbb{Q}^\theta}{\mathrm{d}\mathbb{P}}\right]\le C(N)\mathbb{E}[e^{-2R_{s,k,N}(Y_1)}+|R_{s,k,N}(Y_1)|^2+1]<\infty.\]
\end{proof}

\end{lemma}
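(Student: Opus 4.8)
The plan is to compute the relative entropy explicitly and estimate it by hand. In the present setting the Girsanov density of interest is
\[ \Phi \;:=\; \frac{\mathrm{d}\mathbb{Q}^\theta}{\mathrm{d}\mathbb{P}} \;=\; \mathcal{Z}_{s,N}^{-1}\,e^{-R_{k,s,N}-E^q_N} \]
(evaluated at $(Y_1(1),\vec{Y}(1))$, with $\theta$ the drift that produces the tilted measure $\mathcal{Z}_{s,N}^{-1}e^{-R_{k,s,N}-E^q_N}\,\mathrm{d}\mathbb{P}$), so that $\log\Phi = -R_{k,s,N}-E^q_N-\log\mathcal{Z}_{s,N}$ and
\[ H(\mathbb{Q}^\theta\mid\mathbb{P}) \;=\; \mathbb{E}[\Phi\log\Phi] \;=\; \mathcal{Z}_{s,N}^{-1}\,\mathbb{E}\!\left[e^{-R_{k,s,N}-E^q_N}\bigl(-R_{k,s,N}-E^q_N-\log\mathcal{Z}_{s,N}\bigr)\right]. \]
I would reduce the claim to two ingredients: \textbf{(a)} a strictly positive lower bound $\mathcal{Z}_{s,N}\ge c(N)>0$, so that $|\log\mathcal{Z}_{s,N}|<\infty$; and \textbf{(b)} the integrability $\mathbb{E}\!\left[e^{-R_{k,s,N}}(1+|R_{k,s,N}|)\right]<\infty$. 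Granting these, the $E^q_N$-terms are harmless: since $k+1$ is even, $\int_{\T^3}u_N^{k+1}\ge 0$, hence $E_N\ge 0$ and $E^q_N\ge 0$, so $e^{-E^q_N}\le 1$, and using $xe^{-x}\lesssim 1$ on $[0,\infty)$ one gets $\mathbb{E}[\Phi|\log\Phi|]\lesssim_N \mathbb{E}\bigl[e^{-R_{k,s,N}}(1+|R_{k,s,N}|)\bigr]$, i.e.\ exactly the quantity in (b).

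For \textbf{(a)}, Jensen's inequality gives $\mathcal{Z}_{s,N}=\mathbb{E}_{\vec{\nu}_s}[e^{-R_{k,s,N}-E^q_N}]\ge e^{-\mathbb{E}[R_{k,s,N}]-\mathbb{E}[E^q_N]}$, and the exponent is finite because, for fixed $N$, both $R_{k,s,N}$ and $E^q_N$ are polynomials in the finitely many Gaussians $\{g_n,h_n:|n|\le N\}$, hence lie in $\bigoplus_{j\le q(k+1)}\mathcal{H}_j$ and so in $L^p(\vec{\nu}_s)$ for every $p<\infty$ by hypercontractivity; this is also covered by the bounds of Lemma \ref{lemma:chaos_estimate} (which place $u,v$ in $L^p$ of a positive-regularity H\"older space, so $E_N$, $E^q_N$ are $L^p$-bounded). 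In particular they are integrable.

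For \textbf{(b)}, the key — and the one spot where the defocusing structure is used — is that $k-1$ and $k+1$ are even, so $(D^s u_N)^2u_N^{k-1}\ge 0$ pointwise and $\sigma_N\ge 0$, whence
\[ R_{k,s,N} \;=\; \tfrac{k}{2}\!\int_{\T^3}\!\bigl((D^su_N)^2-\sigma_N\bigr)u_N^{k-1} + \tfrac{1}{k+1}\!\int_{\T^3}\! u_N^{k+1} \;\ge\; -\tfrac{k\sigma_N}{2}\!\int_{\T^3}\! u_N^{k-1} + \tfrac{1}{k+1}\!\int_{\T^3}\! u_N^{k+1}. \]
Setting $A:=\int_{\T^3}u_N^{k+1}\ge 0$ and using H\"older, $\int_{\T^3}u_N^{k-1}\le|\T^3|^{2/(k+1)}A^{(k-1)/(k+1)}$, I get $-R_{k,s,N}\le \tfrac{k\sigma_N}{2}|\T^3|^{2/(k+1)}A^{(k-1)/(k+1)}-\tfrac{1}{k+1}A$; since $(k-1)/(k+1)<1$ and $\sigma_N<\infty$, the right side is bounded above, uniformly in $A\ge 0$, by a finite constant $M_N$ (equivalently, one absorbs the $\sigma_N$-term into the coercive power of $E^q_N$ via Young's inequality, obtaining $R_{k,s,N}+E^q_N\ge -M_N$). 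Thus $e^{-R_{k,s,N}}\le e^{M_N}$ deterministically, so $\mathbb{E}[e^{-2R_{k,s,N}}]<\infty$, while $\mathbb{E}[R_{k,s,N}^2]<\infty$ again by the chaos bound. Cauchy-Schwarz then gives $\mathbb{E}[e^{-R_{k,s,N}}|R_{k,s,N}|]\le \mathbb{E}[e^{-2R_{k,s,N}}]^{1/2}\mathbb{E}[R_{k,s,N}^2]^{1/2}<\infty$, which is (b). Combining (a) and (b) yields $H(\mathbb{Q}^\theta\mid\mathbb{P})<\infty$, and \eqref{eqn: quad-finite} then follows from the entropy inequality $\mathbb{E}_{\mathbb{Q}^\theta}\bigl[\int_0^t\|\theta\|_{L^2_x}\,\mathrm{d}s\bigr]\le 2H(\mathbb{Q}^\theta\mid\mathbb{P})$ of \cite[Lemma 2.6]{follmer}.

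I do not expect a serious obstacle for this fixed-$N$ statement: everything rests on the sign in (b) — oddness of $k$ (the defocusing nonlinearity) is what lets $\int_{\T^3}u_N^{k+1}$ dominate the $\sigma_N$-divergence of $Q_{s,N}(u_N)$ — together with the fact that, for fixed $N$, all functionals in play are finite-chaos and hence have all moments. The genuinely delicate point, namely making the lower bound in (b) \emph{uniform} in $N$ — which is exactly what forces the weighted measure to be modified by $E^q_N$ and the partition function $\mathcal{Z}_{s,N}$ to be analyzed via the Barashkov-Gubinelli variational representation — is the content of Proposition \ref{lem: Lp-bound}, not of this lemma.
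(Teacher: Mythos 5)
Your proof is correct and follows essentially the same route as the paper's: Jensen's inequality gives $\mathcal{Z}_{s,N}\ge c(N)>0$, H\"older and Young give a deterministic lower bound on the exponent $R_{k,s,N}+E^q_N$, Cauchy--Schwarz and finite-chaos moment bounds give finiteness of the entropy, and F\"ollmer's inequality yields \eqref{eqn: quad-finite}. The only difference is cosmetic: you extract coercivity from the term $\tfrac{1}{k+1}\int_{\T^3} u_N^{k+1}$ already present in $R_{k,s,N}$ (valid because the H\"older exponent $(k-1)/(k+1)$ is strictly less than $1$), whereas the paper's display absorbs the $\sigma_N$-contribution into $E^q_N$ via Young with conjugate exponents $q(k+1)/(k-1)$ and $r$; for fixed $N$ both yield the same deterministic lower bound.
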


\begin{proposition}
Recall the definition of the partition function $\mathcal{Z}_{s,N}$ 
For $N\in\N$ we have,
\begin{equation}\label{eqn: variational}
    -\log \mathcal{Z}_{s,N}= \inf_{\theta\in \mathbb{H}_a}\E_{\mathbb{Q}^\theta}\left[ R_{k,s,N}(Y_1^{\theta}(1)+I_1^\theta(1))+E^q_N(\vec{Y}^\theta(1)+\vec{I}^\theta(1))  +\frac{1}{2}\int_0^1\norm{\vec{\theta}(t)}_{L^2_x\times L^2_x}^2dt \right].
\end{equation}

\end{proposition}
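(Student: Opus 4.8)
The plan is to establish the variational formula \eqref{eqn: variational} as an instance of the Bou\'e--Dupuis / Barashkov--Gubinelli representation of exponential moments of Wiener functionals, exactly as in \cite{BG,GOTW}. Write $\mathcal{Z}_{s,N}=\mathbb{E}_{\mathbb{P}}[e^{-G_N(\vec{Y}(1))}]$ where $G_N(\vec{u}):=R_{k,s,N}(u)+E^q_N(\vec{u})$ is a bounded-below, $\mathbb{P}$-integrable functional of the time-$1$ value of the cylindrical process $\vec{Y}$ (this integrability was verified in the proof of Lemma \ref{lem: relative-entropy}). The Bou\'e--Dupuis formula then asserts
\[
-\log \mathbb{E}_{\mathbb{P}}[e^{-G_N(\vec{Y}(1))}]
=\inf_{\theta\in\mathbb{H}_a}\mathbb{E}_{\mathbb{P}}\Bigl[G_N\bigl(\vec{Y}(1)+\vec{I}^\theta(1)\bigr)+\tfrac12\int_0^1\|\vec\theta(t)\|_{L^2_x\times L^2_x}^2\,\mathrm{d}t\Bigr],
\]
where under $\mathbb{P}$ the process $\vec{Y}$ is Gaussian and the drift contributes the deterministic shift $\vec{I}^\theta(1)=\int_0^1(\mathcal{J}^{-s-1}\theta_1,J^{-s}\theta_2)\,\mathrm{d}t'$. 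To match the stated form, which is written as an expectation under $\mathbb{Q}^\theta$ with the undrifted process $\vec{Y}^\theta$, I would apply Girsanov: under $\mathbb{Q}^\theta$ the process $\vec{X}^\theta$ (hence $\vec{Y}^\theta$) is again an $L^2$-cylindrical Brownian motion, and $\vec{Y}(1)$ has the same law under $\mathbb{P}$ as $\vec{Y}^\theta(1)+\vec{I}^\theta(1)$ has under $\mathbb{Q}^\theta$; the entropy term $\tfrac12\int_0^1\|\vec\theta\|^2$ is the same in both formulations. The finiteness \eqref{eqn: quad-finite} from Lemma \ref{lem: relative-entropy} guarantees the drift terms are well-defined and the infimum is over a nonempty admissible class.

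Concretely I would carry this out in the following steps. First, reduce to finite-dimensional truncations: since $R_{k,s,N}$ and $E^q_N$ depend only on $\pi_N u$ and $\pi_N v$, $G_N$ factors through finitely many Fourier modes, so one may invoke the classical Bou\'e--Dupuis formula on $\mathbb{R}^d$-valued Brownian motion (see \cite{BG} for the version adapted to Gaussian measures on negative-regularity spaces, or argue directly on the finitely many relevant coordinates). Second, verify the hypotheses: $G_N$ is bounded below (the chain of inequalities in Lemma \ref{lem: relative-entropy} gives $G_N\ge -C(N)$) and $e^{-G_N}\in L^{1+\delta}(\mathbb{P})$ for some $\delta>0$ (again from the same computation, controlling $\sigma_N^{2r}$ and using positivity of the $E^q_N$ term), which is the integrability needed for the Bou\'e--Dupuis representation. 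Third, rewrite the representation in terms of the shifted/drifted processes as above, and then perform the change of measure to $\mathbb{Q}^\theta$ to obtain precisely \eqref{eqn: variational}. The upper bound direction of the identity ("$\le$") is the trivial one obtained by plugging in $\theta\equiv 0$, and the lower bound ("$\ge$") is the substantive content of Bou\'e--Dupuis.

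The main obstacle, as usual in this circle of ideas, is not any single step here but rather ensuring that the abstract Bou\'e--Dupuis formula applies in the infinite-dimensional setting with the low-regularity Wiener process $\vec{Y}$: one needs the functional $G_N$ to be measurable and integrable with respect to the relevant Gaussian measure, and one needs the admissible drifts $\mathbb{H}_a$ to be rich enough that the infimum is attained as a genuine infimum over $L^2$-drifts. Since $G_N$ here depends on only finitely many frequencies, this difficulty is essentially bypassed: the formula reduces to the finite-dimensional statement on the modes $|n|\le N$, and the remaining (infinitely many) modes are unaffected by the drift in $G_N$ and can be integrated out trivially, contributing nothing to either side. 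The only genuine care needed is bookkeeping: matching the $\mathbb{P}$-expectation of a drifted functional with the $\mathbb{Q}^\theta$-expectation of the corresponding undrifted functional via Girsanov, and checking that the quadratic cost term is invariant under this identification. Everything else is a citation to \cite{BG} and the finiteness statements already established in Lemma \ref{lem: relative-entropy}.
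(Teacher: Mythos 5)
Your plan---citing the Bou\'e--Dupuis / Barashkov--Gubinelli representation for $-\log\mathcal{Z}_{s,N}$, verifying integrability via Lemma~\ref{lem: relative-entropy}, reducing to the finitely many modes $|n|\le N$ on which $G_N$ depends, and converting to the $\mathbb{Q}^\theta$ formulation by Girsanov---is essentially the paper's own argument, which simply spells out the two inequalities (Jensen's inequality for ``$\le$'', the martingale representation theorem together with the entropy bound \eqref{eqn: quad-finite} for ``$\ge$'') instead of invoking the theorem as a black box. One small misstatement worth correcting: the ``$\le$'' direction is \emph{not} ``obtained by plugging in $\theta\equiv 0$''; it is the assertion that $-\log\mathcal{Z}_{s,N}$ is bounded above by the variational functional for \emph{every} admissible $\theta$, which follows from Jensen's inequality applied at each fixed drift (and uses the finiteness of $\mathbb{E}_{\mathbb{Q}^\theta}\int_0^1\|\theta\|^2_{L^2_x}\,dt$, or, when that fails, the separate integrability of $G_N$, to dispose of the stochastic integral term).
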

\begin{proof}
  Given $\theta \in \mathbb{H}_a$, Girsanov's theorem gives:
  \begin{align*}
    -\log \mathcal{Z}_{s,N} &= -\log \mathbb{E}[e^{-R_{k,s,N}-E^q(u_N,v_N)}]\\
    &=-\log \mathbb{E}_{\mathbb{Q}^\theta}[e^{-R_{k,s,N}(Y_1^{\theta}(1)+I_1^\theta(1))-E^q_N(\vec{Y}^\theta(1)+\vec{I}^\theta(1))}e^{-\int_0^1 \langle \theta_t,\mathrm{d}X_t\rangle +\frac{1}{2}\int_0^1 \|\theta_s\|^2_{L^2_x}\,\mathrm{d}s}].
  \end{align*}
 By Jensen's inequality, we have
 \begin{align*}
   -\log \mathcal{Z}_{s,N} \le &~\mathbb{E}_{\mathbb{Q}^\theta}[R_{k,s,N}(Y_1^{\theta}(1)+I_1^\theta(1))-E^q_N(\vec{Y}^\theta(1)+\vec{I}^\theta(1))]\\
   &+\mathbb{E}[\int_0^1 \langle \theta_t,\mathrm{d}X_t\rangle -\frac{1}{2}\int_0^1 \|\theta_s\|^2_{L^2_x}\,\mathrm{d}s]\\
                           = &~\mathbb{E}_{\mathbb{Q}^\theta}[R_{k,s,N}(Y_1^{\theta}(1)+I_1^\theta(1))-E^q_N(\vec{Y}^\theta(1)+\vec{I}^\theta(1))]\\
   &+\mathbb{E}[\int_0^1 \langle \theta_t,\mathrm{d}X_t^\theta \rangle +\frac{1}{2}\int_0^1 \|\theta_s\|^2_{L^2_x}\,\mathrm{d}s].
 \end{align*}
 
 If
 \[\mathbb{E}_{\mathbb{Q}^\theta}\bigg[\int_0^1 \|\theta_s\|^2_{L^2_x}\,\mathrm{d}s\bigg]<\infty,\]
 the stochastic integral term is a martingale, so its expectation vanishes and we find
 \begin{equation}\label{eqn: var-bound-1}
   -\log \mathcal{Z}_{s,N} \le \mathbb{E}_{\mathbb{Q}^\theta}\bigg[R_{k,s,N}(Y_1^{\theta}(1)+I_1^\theta(1))-E^q_N(\vec{Y}^\theta(1)+\vec{I}^\theta(1))+\frac{1}{2}\int_0^1 \|\theta_s\|^2_{L^2_x}\,\mathrm{d}s\bigg].
 \end{equation}
 If instead
\[\mathbb{E}_{\mathbb{Q}^\theta}\bigg[\int_0^1 \|\theta_s\|^2_{L^2_x}\,\mathrm{d}s\bigg]=\infty,\]
the inequality \eqref{eqn: var-bound-1} holds trivially provided we verify that
\[R_{k,s,N}(Y_1^{\theta}(1)+I_1^\theta(1))-E^q_N(\vec{Y}^\theta(1)+\vec{I}^\theta(1))\]
is $\mathbb{Q}^\theta$-integrable, which we do below.

 Conversely, the measure
 \[\frac{\mathrm{d}\mathbb{Q}^N}{\mathrm{d}\mathbb{P}}=\frac{e^{-R_{k,s,N}(Y_1)-E^q_N(\vec{Y}(1))}}{\mathcal{Z}_{s,N}}\]
 is absolutely continuous with respect to $\mathbb{P}$, so there is a $\tilde{\theta}\in \mathbb{H}_a$, such that
  \[\frac{\mathrm{d}\mathbb{Q}^N}{\mathrm{d}\mathbb{P}}=e^{\int_0^1 \langle \tilde{\theta}^N_t,\mathrm{d}X_t\rangle-\frac{1}{2}\int_0^1\|\tilde{\theta}^N_t\|^2_{L^2_x}\,\mathrm{d}t}.\]
  Combining the last two expressions gives
  \begin{equation}\label{eqn: take-exp}
    -\log\mathcal{Z}_{s,N}:=R_{k,s,N}(Y_1)+E^q_N(\vec{Y}(1))+\int_0^1 \langle \tilde{\theta}^N_t,\mathrm{d}X_t\rangle-\frac{1}{2}\int_0^1\|\tilde{\theta}^N_t\|^2_{L^2_x}\,\mathrm{d}t.
    \end{equation}
By Lemma \ref{lem: relative-entropy} we have
   \[\mathbb{E}_{\mathbb{Q}^{\tilde{\theta}}}\bigg[\int_0^1 \|\tilde{\theta}_s\|^2_{L^2_x}\,\mathrm{d}s\bigg]<\infty,\]
   we can take expectations in \eqref{eqn: take-exp} and the martingale term vanishes, so
   \[-\log \mathcal{Z}_{s,N}=\mathbb{E}\left[R_{k,s,N}(Y_1)+E^q_N(\vec{Y}(1))+\frac{1}{2}\int_0^1\|\tilde{\theta}^N_t\|^2_{L^2_x}\,\mathrm{d}t\right].\]
\end{proof}

\subsection{Exponential integrability}
We now prove Proposition \ref{lem: Lp-bound} by estimating the quantity on the right side of \eqref{eqn: variational}. Since the time $t=1$ is fixed, for simplicity we set 
\[\vec{Y}^\theta:=(Y_1^\theta,Y_2^\theta)=(Y_1^\theta(1), Y_2^\theta(1)).\] 
A simple application of Young's inequality gives
\begin{equation}
    \frac{1}{2}E^q_N(\vec{I}^\theta)\leq C E^q_N(\vec{Y}^\theta)+E^q_N(\vec{Y}^\theta+\vec{I}^\theta)\nonumber.
\end{equation}
for some large constant $C$. Hence it suffices to bound
\begin{equation}\label{EQU: Need to bound below 2}
        \E_{\mathbb{Q}^\theta}\left[ R_{k,s,N}(Y_1^{\theta}+I_1^\theta)-CE^q_N(\vec{Y}^\theta)+\frac{1}{2}E_N^q(\vec{I}^\theta)  +\frac{1}{2}\int_0^1\norm{\vec{\theta}(t)}_{L^2_x\times L^2_x}^2dt \right].
\end{equation}

The following lemma gives the regularity of  $D^sY_1^\theta$, $:(D^sY_1^\theta)^2:$ and $Y^\theta_1$. 
\begin{lemma}\label{Lemma: prob control on Y}
Let $2<p<\infty$. Then for $\eps>0$,
\begin{equation}
    \sup\limits_{\theta\in \mathbb{H}_a}\E_{\mathbb{Q}^\theta}\left[\norm{D^sY_1^\theta}_{C^{-\frac{1}{2}-\eps}}^p + \norm{:(D^sY_1^\theta)^2:}_{C^{-1-\eps}}^p + \norm{Y^\theta_1}_{C^{s-\frac{1}{2}-\eps}}^p\right]<\infty.\nonumber 
\end{equation}
\begin{proof}
This follows directly from Proposition 4.3 in \cite{GOTW}.
\end{proof}
\end{lemma}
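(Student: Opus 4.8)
The plan is to remove the supremum over $\theta$ entirely by a change-of-measure argument and then invoke Gaussian estimates already contained in Lemma~\ref{lemma:chaos_estimate}. By Girsanov's theorem, under $\mathbb{Q}^\theta$ the shifted process $\vec{X}^\theta$ is an $L^2$-cylindrical Brownian motion whose law coincides with that of $\vec{X}$ under $\mathbb{P}$; applying the deterministic Fourier multiplier $\mathcal{J}^{-s-1}$, the law of $Y_1^\theta(1)=\mathcal{J}^{-s-1}X_1^\theta(1)$ under $\mathbb{Q}^\theta$ therefore equals the law of $Y_1(1)=\mathcal{J}^{-s-1}X_1(1)$ under $\mathbb{P}$, i.e.\ the first marginal of $\vec{\nu}_s$. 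Since the renormalization constant $\sigma_N=\E_{\vec{\nu}_s}[(D^s\pi_N u)^2]$ defining the Wick product is deterministic, each of the three norms in the statement is a fixed measurable functional of $Y_1^\theta(1)$ alone, so $\E_{\mathbb{Q}^\theta}[\,\cdot\,]$ does not depend on $\theta$ and
\begin{align*}
&\sup_{\theta\in\mathbb{H}_a}\E_{\mathbb{Q}^\theta}\!\left[\norm{D^sY_1^\theta}_{C^{-\frac{1}{2}-\eps}}^p + \norm{:(D^sY_1^\theta)^2:}_{C^{-1-\eps}}^p + \norm{Y_1^\theta}_{C^{s-\frac{1}{2}-\eps}}^p\right]\\
&\qquad\qquad= \E_{\vec{\nu}_s}\!\left[\norm{D^sY_1}_{C^{-\frac{1}{2}-\eps}}^p + \norm{:(D^sY_1)^2:}_{C^{-1-\eps}}^p + \norm{Y_1}_{C^{s-\frac{1}{2}-\eps}}^p\right].
\end{align*}
Thus it remains only to bound this last, purely Gaussian, quantity, which is moreover $N$-independent once the Wick constant has been removed.

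The term $\norm{Y_1}_{C^{s-\frac{1}{2}-\eps}}$ is handled by \eqref{eqn: u-C-estimate}, since the Fourier weights $(|n|^2+|n|^{2s+2})^{\frac{1}{2}}$ in \eqref{eqn: useries} are comparable to $\jap{n}^{s+1}$. For the remaining two I would repeat the Littlewood--Paley/hypercontractivity scheme of \eqref{chaos estimate_bound}. The field $D^sY_1$ lies in the first Wiener chaos and, by \eqref{eqn: useries}, $\E[|\mathbf{P}_jD^sY_1(x)|^2]=\sum_{n\sim 2^j}|n|^{2s}(|n|^2+|n|^{2s+2})^{-1}\lesssim\sum_{n\sim 2^j}\jap{n}^{-2}\lesssim 2^{j}$; inserting this into Bernstein's inequality \eqref{eqn: bernstein}, Minkowski's inequality, and the hypercontractivity bound \eqref{eqn: wiener-chaos} gives, for $p>3/\eps$,
\begin{align*}
\norm{D^sY_1}_{L^p(\vec{\nu}_s)C^{-\frac{1}{2}-\eps}}\lesssim \norm{D^sY_1}_{L^p(\vec{\nu}_s)B^{-\frac{1}{2}-\eps+\frac{3}{p}}_{p,1}}\lesssim \sqrt{p}\sum_{j}2^{j(\frac{3}{p}-\eps)}\lesssim \sqrt{p},
\end{align*}
and the range $p\le 3/\eps$ follows by H\"older's inequality. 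The Wick square $:(D^sY_1)^2:$ lies in the second chaos, and, exactly as in \eqref{ineq: expectation of dyadic}, $\E[|\mathbf{P}_j:(D^sY_1)^2:(x)|^2]\lesssim \sum_{n\sim 2^j}\sum_{n_1\in\Z^3}\jap{n_1}^{-2}\jap{n-n_1}^{-2}\lesssim\sum_{n\sim 2^j}\jap{n}^{-1+\eps'}\lesssim 2^{j(2+\eps')}$, where the inner sum is controlled by the convolution estimate \eqref{ineq: convolution} with $\alpha=\beta=1$ (here $\alpha+\beta=2>\frac{3}{2}$ and $\min\{2,2,1\}=1$). Running the same scheme, now picking up a factor $p$ rather than $\sqrt{p}$ because the chaos has order $2$, gives $\norm{:(D^sY_1)^2:}_{L^p(\vec{\nu}_s)C^{-1-\eps}}\lesssim p$ for $p$ large and hence for all finite $p$. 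This is precisely the content of \cite[Proposition~4.3]{GOTW}, which could alternatively be quoted directly.

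I do not anticipate a real obstacle. The single point requiring care is that the Wick renormalization constant must be the $\vec{\nu}_s$-expectation of $(D^sY_1)^2$ (equivalently the $\mathbb{Q}^\theta$-expectation of $(D^sY_1^\theta)^2$, which agree by the law identification above), so that the change of measure genuinely makes the three quantities independent of $\theta$; granting this, the conclusion reduces to the standard dyadic-decomposition-plus-hypercontractivity estimate, with \eqref{ineq: convolution} supplying the only nonroutine summation.
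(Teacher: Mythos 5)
Your proof is correct and follows the same route as the paper: the paper's citation of \cite[Proposition~4.3]{GOTW} (and the explicit change-of-measure remark made in the neighboring lemma bounding $E_N^q(\vec{Y}^\theta)$) rests on exactly the Girsanov identification $\mathrm{Law}_{\mathbb{Q}^\theta}(Y_1^\theta(1))=\mathrm{Law}_{\mathbb{P}}(Y_1(1))$, after which the three norms are $\theta$-independent Gaussian quantities controlled by Littlewood--Paley plus hypercontractivity as you do. You have simply unpacked the cited proposition rather than quoting it; the computations (the convolution bound with $\alpha=\beta=1$, the factor $\sqrt{p}$ vs.\ $p$ according to chaos order) match the scheme of Lemma~\ref{lemma:chaos_estimate}.
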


A direct application of Cauchy-Schwarz (see \cite[Lemma 4.7]{GOTW}) gives
\begin{equation}
    \norm{I_1^\theta}_{H^{s+1}}^2\leq \int_0^1\norm{\theta_t}_{L^2}^2dt.
\end{equation}

\begin{lemma}
For $s>\frac{3}{2}$,
\begin{equation}
   \sup\limits_{\theta\in \mathbb{H}_a}\E_{\mathbb{Q}^\theta}\left[E_N^q(\vec{Y}^\theta)  \right] <\infty\nonumber
\end{equation}
independently of $N$.
\end{lemma}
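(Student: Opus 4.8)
The plan is to exploit the fact that the control drift $\theta$ has no effect on the \emph{law} of $\vec{Y}^\theta$, so that the supremum over $\theta$ is attained for free. Under $\mathbb{Q}^\theta$ the process $\vec{X}^\theta$ is an $L^2$-cylindrical Brownian motion, and $\vec{Y}^\theta(1)=(\mathcal{J}^{-s-1}X_1^\theta(1),\,J^{-s}X_2^\theta(1))$ is produced from $\vec{X}^\theta$ by exactly the same linear operations that produce $\vec{Y}(1)$ from $\vec{X}(1)$. Hence $\mathrm{Law}_{\mathbb{Q}^\theta}(\vec{Y}^\theta(1))=\mathrm{Law}_{\mathbb{P}}(\vec{Y}(1))=\vec{\nu}_s$, and therefore
\[\mathbb{E}_{\mathbb{Q}^\theta}\big[E_N^q(\vec{Y}^\theta)\big]=\mathbb{E}_{\vec{\nu}_s}\big[E_N^q(\vec{u})\big]\qquad\text{for every }\theta\in\mathbb{H}_a.\]
So the statement reduces to the uniform-in-$N$ bound $\sup_N \mathbb{E}_{\vec{\nu}_s}[E_N^q(\vec{u})]<\infty$.

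For that bound I would first record a pointwise estimate that is uniform in $N$ because $\pi_N$ is a contraction on every $H^\alpha$ and every $L^p$: recalling \eqref{eqn: truncated-energy} and using the embeddings $C^{1+\varepsilon}\hookrightarrow W^{1,\infty}\hookrightarrow H^1$ and $C^{\varepsilon}\hookrightarrow L^\infty$ on $\mathbb{T}^3$,
\[E_N(u,v)\lesssim \|u\|_{H^1}^2+\|v\|_{L^2}^2+\|u\|_{L^\infty}^{k+1}\lesssim \|u\|_{C^{1+\varepsilon}}^2+\|v\|_{C^{\varepsilon}}^2+\|u\|_{C^{\varepsilon}}^{k+1}.\]
Raising to the power $q$ and expanding, $E_N^q(u,v)$ is dominated by a finite sum of monomials in $\|u\|_{C^{1+\varepsilon}}$ and $\|v\|_{C^{\varepsilon}}$, all with exponents bounded by $q(k+1)$.

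It then remains to integrate each monomial against $\vec{\nu}_s$. For $s>\tfrac32$ one can fix $\varepsilon>0$ with $1+\varepsilon<s-\tfrac12-\varepsilon$, so that Lemma \ref{lemma:chaos_estimate}, specifically \eqref{eqn: u-C-estimate} and \eqref{eqn: v-C-estimate}, gives $\|u\|_{L^p(\vec{\nu}_s)C^{1+\varepsilon}}\lesssim\sqrt p$ and $\|v\|_{L^p(\vec{\nu}_s)C^{\varepsilon}}\lesssim\sqrt p$ for every finite $p$. Applying H\"older's inequality to each monomial and inserting these bounds yields $\mathbb{E}_{\vec{\nu}_s}[E_N^q(\vec{u})]\le C(s,k,q)<\infty$ with $C$ independent of $N$; this is precisely the uniform integrability of $E_N^q$ already invoked in the proof of Lemma \ref{lem: Lp-convergence}.

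I do not expect a genuine obstacle in this lemma. The one point deserving care is the Girsanov identity $\mathrm{Law}_{\mathbb{Q}^\theta}(\vec{Y}^\theta(1))=\vec{\nu}_s$, since everything downstream — uniformity in $\theta$ and the uniform-in-$N$ moment bound — is then either automatic or a routine consequence of the Gaussian regularity estimates of Lemma \ref{lemma:chaos_estimate}; a minor bookkeeping point is checking that a single choice of $\varepsilon$ simultaneously handles the $u$- and $v$-contributions and all monomials arising from $E_N^q$, which is immediate once $s>\tfrac32$.
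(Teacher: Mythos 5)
Your proposal is correct and follows the paper's proof exactly: both exploit the Girsanov identity that $\mathrm{Law}_{\mathbb{Q}^\theta}(\vec{Y}^\theta(1))=\vec{\nu}_s$ for every drift $\theta$, which makes the supremum over $\theta$ trivial, and then reduce to the uniform-in-$N$ moment bound via the H\"older-space estimates \eqref{eqn: u-C-estimate} and \eqref{eqn: v-C-estimate}. You have simply written out the remaining H\"older/embedding bookkeeping that the paper leaves implicit.
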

\begin{proof}
Under $\mathbb{Q}^\theta$, $\vec{Y}^\theta(1)=(Y_1^\theta(1),Y_2^\theta(1))$ has the same distribution as the pair $(u,v)$ in under $\vec{\nu}_s$. The result then follows from \eqref{eqn: u-C-estimate}, \eqref{eqn: v-C-estimate}.
\end{proof}

We introduce some abbreviated notations for the most common terms appearing in the estimates below. We set:
\begin{align*}
Y&:=Y_1^\theta,\\
\Theta&:=I_1^\theta,\\
E&:= E_N(\vec{I}^\theta).
\end{align*} 
From the definition of $R_{k,s,N}$ we have
\begin{align}
    R_{k,s,N}(Y+\Theta)=&\frac{k(k-1)}{2}\int_{\T^3}:(D^sY)^2: \sum\limits_{m=0}^{k-1}\binom{k-1}{m} Y^{k-1-m}\Theta^m\nonumber \\ 
    &+k(k-1)\int_{\T^3}D^sYD^s\Theta \sum\limits_{m=0}^{k-1}\binom{k-1}{m} Y^{k-1-m}\Theta^m\nonumber \\ 
    &+\frac{k(k-1)}{2}\int_{\T^3}(D^s\Theta)^2\sum\limits_{m=0}^{k-1}\binom{k-1}{m} Y^{k-1-m}\Theta^m\nonumber \\
    &+\frac{1}{k+1}\int_{\T^3}(Y+\Theta)^{k+1}.
\end{align}
We aim to bound \eqref{EQU: Need to bound below 2} by using Young's inequality and the positive terms
\begin{equation}
    \int_{\T^3}  (D^s\Theta)^2\Theta^{k-1},\quad \int_{\T^3}  \Theta^{k+1},\quad E^q,\quad \norm{\Theta}_{H^{s+1}}^2\nonumber
\end{equation}
in \eqref{EQU: Need to bound below 2}.

\begin{lemma}
(Terms quadratic in $D^sY$). Let $s>\frac{3}{2}$ and $0\leq m\leq k-1$. Then for sufficiently small $\delta>0$ there exists small $\eps>0$ and large $p$ and $c(\delta)$ such that
\begin{align*}
    \left|\int_{\T^3}  :(D^sY)^2: Y^{k-1-m}\Theta^m \right| \leq &~c(\delta)\left(\norm{:(D^sY)^2:}_{C^{-1-\eps}}^p+ \norm{D^sY}_{C^{-\frac{1}{2}-\eps}}^p +\norm{Y}_{C^{s-\frac{1}{2}-\eps}}^p \right)\\
    &+\delta\left(\norm{\Theta}_{H^{s+1}}^2 +E^q \right).
\end{align*}
\end{lemma}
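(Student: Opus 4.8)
The plan is to put the singular factor $:(D^sY)^2:$ into the negative Hölder space $C^{-1-\eps}$, estimate the smooth factor $Y^{k-1-m}\Theta^m$ in the dual Besov space $B^{1+\eps}_{1,1}$, and then split the resulting product of norms by Young's inequality, using $\norm{\Theta}_{H^{s+1}}^2$ and $E^q$ as the absorbing terms. First I would apply the duality pairing \eqref{EQU: Duality} (with exponents $\infty,\infty$ and $1,1$) to obtain
\[
\left|\int_{\T^3} :(D^sY)^2:\,Y^{k-1-m}\Theta^m\right| \lesssim \norm{:(D^sY)^2:}_{C^{-1-\eps}}\,\norm{Y^{k-1-m}\Theta^m}_{B^{1+\eps}_{1,1}};
\]
the same structure results if one instead uses the multiplicative estimate \eqref{EQU: Multiplicative estimate} and then extracts the zero Fourier mode. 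By Lemma \ref{Lemma: prob control on Y} the factor $\norm{:(D^sY)^2:}_{C^{-1-\eps}}$ is already one of the three $Y$-norms permitted on the right-hand side, so the whole task reduces to the product estimate.

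The crucial point is the bound for $\norm{Y^{k-1-m}\Theta^m}_{B^{1+\eps}_{1,1}}$. Since $s>\tfrac32$, Lemma \ref{Lemma: prob control on Y} gives $Y=Y_1^\theta\in C^{s-\frac12-\eps}$, and the Cauchy--Schwarz estimate $\norm{\Theta}_{H^{s+1}}^2\le\int_0^1\norm{\theta_t}_{L^2}^2\,dt$ together with Sobolev embedding gives $\Theta=I_1^\theta\in C^{s-\frac12}$; both have Hölder regularity strictly above $1+2\eps$ once $\eps$ is small. A crude bound via the algebra property \eqref{EQU: Algebra property} would read $\norm{Y}_{C^{s-\frac12-\eps}}^{k-1-m}\norm{\Theta}_{C^{1+2\eps}}^m\lesssim\norm{Y}_{C^{s-\frac12-\eps}}^{k-1-m}\norm{\Theta}_{H^{s+1}}^m$, but this is too lossy: $m$ can be as large as $k-1>2$, while the only available absorbing term of this shape is $\norm{\Theta}_{H^{s+1}}^2$, so $\norm{\Theta}_{H^{s+1}}^m$ with $m>2$ cannot be absorbed. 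Instead I would use the fractional Leibniz rules \eqref{EQU: Fractional Leibniz rule}, \eqref{EQU: Fracational Leibniz rule cor} and the Besov embeddings \eqref{EQU: Intermediate embeddings}, \eqref{EQU: Besov embedding} to place all $1+\eps$ derivatives on a \emph{single} factor, lying in $B^{1+\eps}_{p_0,1}$ with $\tfrac1{p_0}=\tfrac3{k+1}$, and park the remaining $k-2$ factors in $L^{k+1}$. Because $B^{s+1}_{2,2}\hookrightarrow B^{1+\eps}_{p_0,1}$ and $C^{s-\frac12-\eps}\hookrightarrow B^{1+\eps}_{p_0,1}$ for $s>\tfrac32$ and $\eps$ small, this yields, for $1\le m\le k-1$,
\[
\norm{Y^{k-1-m}\Theta^m}_{B^{1+\eps}_{1,1}}\lesssim \norm{Y}_{C^{s-\frac12-\eps}}^{k-1-m}\Big(\norm{\Theta}_{H^{s+1}}\,\norm{\Theta}_{L^{k+1}}^{m-1}+\norm{\Theta}_{L^{k+1}}^{m}\Big),
\]
the two contributions corresponding to whether the derivative-carrying factor is one of the $\Theta$'s or one of the $Y$'s (for $m=0$ the bound is simply $\norm{Y}_{C^{s-\frac12-\eps}}^{k-1}$, with no positive term needed).

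The last step is Young's inequality. I would combine the two displays and note that $E=E_N(\vec I^\theta)\gtrsim\norm{\Theta}_{L^{k+1}}^{k+1}$ directly from the definition of $E_N$ (all of its terms being nonnegative, $k+1$ even). The norms $\norm{:(D^sY)^2:}_{C^{-1-\eps}}$ and $\norm{Y}_{C^{s-\frac12-\eps}}$ are absorbed into $c(\delta)(\cdots)^p$ by taking $p$ large; the single power of $\norm{\Theta}_{H^{s+1}}$ is squared to produce $\delta\norm{\Theta}_{H^{s+1}}^2$; and all remaining powers of $\norm{\Theta}_{L^{k+1}}$ are dominated by $\delta\,(E^q+1)$ once the (still free) parameter $q=q(s,k)$ is taken large enough. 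Additive constants generated along the way are harmless for the later use of the estimate. The inclusion of $\norm{D^sY}_{C^{-\frac12-\eps}}^p$ on the right-hand side is not needed here (it appears in the companion estimates for the terms linear in $D^sY$); its coefficient may simply be taken to be any nonnegative number.

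The main obstacle is exactly the one isolated above: the drift $\Theta$ enters to a power $m$ that may exceed $2$ while carrying positive Hölder regularity, so the algebra property overcounts derivatives on $\Theta$. Concentrating all $1+\eps$ derivatives on a single factor and parking the other $k-2$ factors in $L^{k+1}$ — where the energy $E$ supplies control — rather than in $L^\infty$, is what keeps only one power of $\norm{\Theta}_{H^{s+1}}$ and lets the argument run all the way down to $s>\tfrac32$; a cruder split through $L^\infty$ and interpolation against $H^{s+1}$ would only reach $s\ge 2$.
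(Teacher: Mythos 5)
Your proof is correct and follows essentially the same strategy as the paper's: pair $:(D^sY)^2:$ with $C^{-1-\eps}$ against the $B^{1+\eps}_{1,\cdot}$-norm of the polynomial factor, then control the latter by a fractional Leibniz argument that concentrates the $1+\eps$ derivatives on a single factor and parks the remaining $k-2$ factors in $L^{k+1}$, where the energy $E$ supplies the absorbing norm, before closing with Young's inequality and a large $q$. The paper's version splits the product first as $\norm{Y^{k-1-m}}_{C^{1+2\eps}}\norm{\Theta^m}_{B^{1+2\eps}_{1,\infty}}$ (treating $m=0$, $0<m<k-1$, $m=k-1$ as separate cases) and then applies \eqref{EQU: Fracational Leibniz rule cor} only to $\Theta^m$, whereas you apply Leibniz to the full mixed product symmetrically, picking up a second harmless term when the Besov-heavy factor is a $Y$; this difference is cosmetic, and both routes reach the stated threshold $s>\tfrac32$ once $\eps$ and $q^{-1}$ are taken small. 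Your observation that the $\norm{D^sY}_{C^{-1/2-\eps}}^p$ term on the right is not actually needed here is also correct — it appears in the statement only to match the companion lemmas.
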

\begin{proof}
For $m=0$, using \eqref{EQU: Duality} and \eqref{EQU: Intermediate embeddings} and \eqref{EQU: Algebra property} we have
\begin{align*}
    \left|\int_{\T^3}  :(D^sY)^2: Y^{k-1}\right|&\lesssim \norm{:(D^sY)^2:}_{C^{-1-\eps}}\norm{Y^{k-1}}_{B^{1+2\eps}_{1,\infty}}\\
    &\lesssim \norm{:(D^sY)^2:}_{C^{-1-\eps}}\norm{Y}_{C^{1+2\eps}}^{k-1}\\
    &\lesssim \norm{:(D^sY)^2:}_{C^{-1-\eps}}\norm{Y}_{C^{s-\frac{1}{2}-\eps}}^{k-1}
\end{align*}
if $s>\frac{3}{2}$ and $\eps>0$ is small. The estimate then  follows from Young's inequality. If $m=k-1$, using \eqref{EQU: Duality}, \eqref{EQU: Intermediate embeddings}, \eqref{EQU: Fracational Leibniz rule cor} and \eqref{EQU: Besov embedding} we have
\begin{align*}
      \left|\int_{\T^3}  :(D^sY)^2: \Theta^{k-1}\right|&\lesssim \norm{:(D^sY)^2:}_{C^{-1-\eps}}\norm{\Theta^{k-1}}_{B^{1+2\eps}_{1,\infty}}\\
      &\lesssim \norm{:(D^sY)^2:}_{C^{-1-\eps}}\norm{\Theta^{k-2}}_{L^1}\norm{\Theta}_{B^{1+2\eps}_{\infty,\infty}}\\
      &\lesssim \norm{:(D^sY)^2:}_{C^{-1-\eps}}\norm{\Theta^{k+1}}_{L^1}^{\frac{k-2}{k+1}}\norm{\Theta}_{B^{\frac{5}{2}+3\eps}_{2,2}}\\ 
      &\lesssim \norm{:(D^sY)^2:}_{C^{-1-\eps}}E^{\frac{k-2}{k+1}}\norm{\Theta}_{H^{s+1}}
\end{align*}
if $s>\frac{3}{2}$ and $\eps>0$ is small. If we choose $q$ large enough so that
\begin{equation}
    \frac{k-2}{q(k+1)}+\frac{1}{2}<1
\end{equation}
the stated inequality then follows from Young's inequality.

If $0<m<k-1$ then similar to the above,
\begin{align*}
    \left|\int_{\T^3}  :(D^sY)^2: Y^{k-1-m}\Theta^m\right| & \lesssim \norm{:(D^sY)^2:}_{C^{-1-\eps}}\norm{Y^{k-1-m}\Theta^m}_{B_{1,\infty}^{1+2\eps}}\\
    &\lesssim  \norm{:(D^sY)^2:}_{C^{-1-\eps}}\norm{Y^{k-1-m}}_{C^{1+2\eps}}\norm{\Theta^m}_{B^{1+2\eps}_{1,\infty}}\\
    &\lesssim \norm{:(D^sY)^2:}_{C^{-1-\eps}}\norm{Y}_{C^{s-\frac{1}{2}-\eps}}^{k-1-m}E^{\frac{m-1}{k+1}}\norm{\Theta}_{H^{s+1}}.
\end{align*}
Hence if we choose $q$ large enough so that 
\begin{equation}
    \frac{m-1}{q(k+1)}+\frac{1}{2}<1\nonumber.
\end{equation}
Young's inequality the gives the desired result. 
\end{proof}

\begin{lemma}
(Terms linear in $D^sY$). Let $s>\frac{5}{2}$ and $0\leq m\leq k-1$. Then for sufficiently small $\delta>0$ there exists small $\eps>0$ and large $p$ and $c(\delta)$ such that
\begin{align*}
    \left|\int_{\T^3}  D^sYD^s\Theta Y^{k-1-m}\Theta^m \right|\leq &~c(\delta) \left( \norm{D^sY}_{C^{-\frac{1}{2}-\eps}}^p+\norm{Y}_{C^{s-\frac{1}{2}-\eps}}^p  \right)\\
    &+\delta\left(\norm{\Theta}_{H^{s+1}}^2 +E^q\right).
\end{align*}
\end{lemma}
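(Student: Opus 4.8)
We isolate the only random factor $D^sY=D^sY_1^\theta$, whose regularity is controlled uniformly in $\theta$ by Lemma~\ref{Lemma: prob control on Y}, and pair it against the deterministic factor $D^s\Theta\,Y^{k-1-m}\Theta^m$ by Besov duality \eqref{EQU: Duality}: for small $\eps>0$,
\[
\left|\int_{\T^3} D^sY\,D^s\Theta\,Y^{k-1-m}\Theta^m\right|\;\lesssim\;\norm{D^sY}_{C^{-\frac12-\eps}}\,\norm{D^s\Theta\,Y^{k-1-m}\Theta^m}_{B^{\frac12+\eps}_{1,1}} .
\]
The remaining task is to estimate the $B^{\frac12+\eps}_{1,1}$ norm of the product, which we do with the fractional Leibniz rule \eqref{EQU: Fractional Leibniz rule}, distributing the $(\tfrac12+\eps)$ derivative either onto $D^s\Theta$ or onto the polynomial coefficient $Y^{k-1-m}\Theta^m$; this produces the two pieces $\norm{D^s\Theta}_{B^{1/2+\eps}_{2,1}}\norm{Y^{k-1-m}\Theta^m}_{L^2}$ and $\norm{D^s\Theta}_{L^2}\norm{Y^{k-1-m}\Theta^m}_{B^{1/2+\eps}_{2,1}}$ (the second Besov index being adjustable by \eqref{EQU: Intermediate embeddings} at the cost of a power of $\eps$).

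The crucial point is that, although $D^s\Theta$ only lives in $H^1$ (recall $\Theta=I_1^\theta\in H^{s+1}$), the duality above asks for merely $\tfrac12+$ of its regularity, so we may spend the surplus on interpolation. Using that $E=E_N(\vec I^\theta)\gtrsim\norm{\Theta}_{H^1}^2$ and interpolating \eqref{EQU: Interpolation} between $H^{s+1}$ and $H^1$, there is $\mu=\mu(s,\eps)<1$ \emph{strictly} with
\[
\norm{D^s\Theta}_{B^{1/2+\eps}_{2,1}}+\norm{D^s\Theta}_{L^2}\;\lesssim\;\norm{\Theta}_{H^{s+1}}^{\mu}\,E^{\frac{1-\mu}{2}} .
\]
For the coefficient, since $s>\tfrac52$ we have $Y\in C^{s-\frac12-\eps}$ with $s-\tfrac12-\eps>2$, so by the algebra property \eqref{EQU: Algebra property} and the Besov embedding \eqref{EQU: Besov embedding} any factor $Y$ can be placed in $C^{s-\frac12-\eps}\hookrightarrow B^{1/2+\eps}_{2,\infty}$ or in $L^\infty$, costing a power of $\norm{Y}_{C^{s-1/2-\eps}}$; the factors of $\Theta$ in the coefficient are placed in $L^{k+1}$, controlled by $\norm{\Theta}_{L^{k+1}}^{k+1}\lesssim E$ (and, in the ranges where $2m>k+1$, one extra interpolation of $L^{2m}$ between $L^{k+1}$ and $L^\infty\hookleftarrow H^{s+1}$, which costs a further power of $\norm{\Theta}_{H^{s+1}}$ strictly less than $\tfrac{k-3}{2}$). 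Collecting everything yields
\[
\left|\int_{\T^3}D^sY\,D^s\Theta\,Y^{k-1-m}\Theta^m\right|\;\lesssim\;\norm{D^sY}_{C^{-1/2-\eps}}\,\norm{Y}_{C^{s-1/2-\eps}}^{k-1-m}\,\norm{\Theta}_{H^{s+1}}^{a}\,E^{b}
\]
with $a=a(s,k,m,\eps)<2$ and $b=b(s,k,m,\eps)<\infty$.

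It then remains to run Young's inequality. Since $a<2$, split $\norm{\Theta}_{H^{s+1}}^{a}$ against the exponent $2$ to produce $\delta\norm{\Theta}_{H^{s+1}}^2$ plus $c(\delta)$ times a finite power of $\norm{D^sY}_{C^{-1/2-\eps}}\norm{Y}_{C^{s-1/2-\eps}}^{k-1-m}E^{b}$; then choose the integer $q$ large enough that the resulting power of $E$ is $<q$, so that a second application of Young's inequality absorbs it into $\delta E^q$; finally bound all remaining random factors by $c(\delta)\bigl(\norm{D^sY}_{C^{-1/2-\eps}}^p+\norm{Y}_{C^{s-1/2-\eps}}^p\bigr)$ for $p$ large, as the statement allows. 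The delicate point — and the main obstacle — is the exponent bookkeeping: one must verify $a<2$ in every case, the worst being $k=5$, $m=k-1$ and $s$ near $\tfrac52$, where $a=\mu+1<2$ holds \emph{precisely because} $\mu<1$; this is where the hypotheses $s>\tfrac52$ and $q$ large are used. As an alternative to the fractional Leibniz route one could instead write $Y^{k-1-m}\Theta^m\,D^s\Theta=T_{Y^{k-1-m}\Theta^m}D^s\Theta+(\text{commutator and resonant terms})$ and invoke Lemma~\ref{low and resonant term lemma} and Lemma~\ref{lemma:decomposition_lemma} exactly as in the proof of Proposition~\ref{theorem:energy_derivative}, but the above argument is shorter here since no derivative has to be commuted past a paraproduct.
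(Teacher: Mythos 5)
Your proposal is correct and follows essentially the same route as the paper: Besov duality pairing $D^sY$ against the remaining product, fractional Leibniz to distribute the $\tfrac12{+}$ derivative, interpolation to make the exponent of $\norm{\Theta}_{H^{s+1}}$ strictly less than $2$ (using $s>\tfrac52$), and Young's inequality with $q$ chosen large (the paper just carries this out case by case for $m=0$, $0<m<k-1$, $m=k-1$). One small imprecision to fix: the claim $E\gtrsim\norm{\Theta}_{H^1}^2$ is not literally true, since $E_N(\vec I^\theta)$ controls $\norm{\nabla\Theta}_{L^2}^2$ quadratically but the $L^2$ part of $\Theta$ only through $\norm{\Theta}_{L^{k+1}}^{k+1}$, so $\norm{\Theta}_{L^2}\lesssim E^{1/(k+1)}$; you should either interpolate in the homogeneous scale and treat the $L^2$ remainder separately, or interpolate between $H^{s+1}$ and $L^2$ as the paper does (giving $\gamma=\tfrac{s+\frac12+2\eps}{s+1}<1$). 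This only changes the power of $E$, not the decisive $\norm{\Theta}_{H^{s+1}}$ exponent, so the Young's inequality step still closes.
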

\begin{proof}

First we estimate the term corresponding to $m=k-1$. Using \eqref{EQU: Duality}, \eqref{EQU: Intermediate embeddings} followed by \eqref{EQU: Fractional Leibniz rule},
\begin{align}
    \left|\int_{\T^3}  D^sY D^s\Theta \Theta^{k-1}\right| \lesssim \norm{D^sY}_{C^{-\frac{1}{2}-\eps}}\norm{D^s\Theta \Theta^{k-1}}_{B^{\frac{1}{2}+2\eps}_{1,\infty}}.\nonumber
\end{align}
Hence it remains to estimate $\norm{D^s\Theta \Theta^{k-1}}_{B^{\frac{1}{2}+2\eps}_{1,\infty}}$. Using \eqref{EQU: Fractional Leibniz rule}, \eqref{EQU: Intermediate embeddings}, \eqref{EQU: Besov embedding}, \eqref{EQU: Fracational Leibniz rule cor}, \eqref{EQU: Besov embedding} again, Jensen's inequality and \eqref{EQU: Interpolation}, we have
\begin{align}\label{EQU: Ds theta theta k-1}
\norm{D^s\Theta \Theta^{k-1}}_{B^{\frac{1}{2}+2\eps}_{1,\infty}} &\lesssim \norm{D^s\Theta}_{B^{\frac{1}{2}+2\eps}_{2,\infty}}\norm{\Theta^{k-1}}_{L^2}+\norm{D^s\Theta}_{L^2}\norm{\Theta^{k-1}}_{B^{\frac{1}{2}+2\eps}_{2,\infty}}\nonumber \\
&\lesssim \norm{D^s\Theta}_{B^{\frac{1}{2}+2\eps}_{2,\infty}}\norm{\Theta^{k-1}}_{B^{\frac{1}{2}+2\eps}_{2,\infty}}\nonumber \\
&\lesssim \norm{\Theta}_{H^{s+\frac{1}{2}+2\eps}}\norm{\Theta^{k-1}}_{B^{2+2\eps}_{1,\infty}}\nonumber \\
&\lesssim \norm{\Theta}_{H^{s+\frac{1}{2}+2\eps}}\norm{\Theta^{k-2}}_{L^1}\norm{\Theta}_{B^{2+2\eps}_{\infty,\infty}}\nonumber \\
&\lesssim \norm{\Theta}_{H^{s+\frac{1}{2}+2\eps}}\norm{\Theta^{k+1}}_{L^1}^\frac{k-2}{k+1}\norm{\Theta}_{B^{\frac{7}{2}+2\eps}_{2,\infty}}\nonumber \\
&\lesssim \norm{\Theta}_{H^{s+1}}^\gamma \norm{\Theta}_{L^2}^{1-\gamma}\norm{\Theta^{k+1}}_{L^1}^\frac{k-2}{k+1}\norm{\Theta}_{H^{s+1}}\nonumber \\
&\lesssim \norm{\Theta}_{H^{s+1}}^{1+\gamma} E^{1-\gamma+\frac{k-2}{k+1}}
\end{align}
for $s>\frac{5}{2}$ and $\eps>0$ small where $\gamma=\gamma(s,\eps)=\frac{s+\frac{1}{2}+2\eps}{s+1}<1$. If we choose $\eps=\eps(s,k)$ small enough and $q=q(s,k)$ large enough so that 
\begin{equation}
    \frac{1+\gamma}{2}+\frac{1}{q}\left( 1-\gamma+\frac{k-2}{k+1} \right)<1 \nonumber
\end{equation}
the desired inequality follows from Young's inequality. 

Now for the case $0<m<k-1$, using \eqref{EQU: Duality}, \eqref{EQU: Intermediate embeddings},\eqref{EQU: Fractional Leibniz rule} we have,
\begin{align*}
   \left| \int_{\T^3}  D^sY D^s\Theta Y^{k-1-m}\Theta^{m}\right| &\lesssim \norm{D^sY}_{C^{-\frac{1}{2}-\eps}}\norm{Y^{k-1-m}D^s\Theta \Theta^{m}}_{B^{\frac{1}{2}+2\eps}_{1,\infty}}\\
    &\lesssim \norm{D^sY}_{C^{-\frac{1}{2}-\eps}}\norm{Y^{k-1-m}}_{C^{\frac{1}{2}+2\eps}}\norm{D^s\Theta \Theta^{m}}_{B^{\frac{1}{2}+2\eps}_{1,\infty}}\\
    &\lesssim \norm{D^sY}_{C^{-\frac{1}{2}-\eps}}\norm{Y}_{C^{s-\frac{1}{2}-\eps}}^{k-1-m}\norm{D^s\Theta \Theta^{m}}_{B^{\frac{1}{2}+2\eps}_{1,\infty}}
\end{align*}
for $s>1$ and $\eps>0$ small.
It remains to estimate the term $\norm{D^s\Theta \Theta^{m}}_{B^{\frac{1}{2}+2\eps}_{1,\infty}}$. If $s>\frac{5}{2}$ and $\eps>0$ is small enough, this term can be estimated in a manner similar to \eqref{EQU: Ds theta theta k-1}. 

Finally we estimate the term corresponding to $m=0$. We have,
\begin{align*}
    \left|\int_{\T^3}  D^sY D^s\Theta Y^{k-1-m}\right| &\lesssim \norm{D^sY}_{C^{-\frac{1}{2}-\eps}}\norm{D^s\Theta Y^{k-1}}_{B^{\frac{1}{2}+2\eps}_{1,\infty}}\\
    &\lesssim \norm{D^sY}_{C^{-\frac{1}{2}-\eps}}\norm{Y}_{C^{\frac{1}{2}+2\eps}}^{k-1}\norm{D^s\Theta}_{B^{\frac{1}{2}+\eps}_{1,\infty}}\\
    &\lesssim \norm{D^sY}_{C^{-\frac{1}{2}-\eps}}\norm{Y}_{C^{s-\frac{1}{2}+2\eps}}^{k-1}\norm{D^s\Theta}_{H^{s+1}}
\end{align*}
for $s>1$ and $\eps>0$ small. The desired inequality then follows from Young's inequality.

\end{proof}

\begin{lemma}
(Terms quadratic in $D^s\Theta$).  Let $s>\frac{1}{2}$ and $0< m< k-1$. Then for sufficiently small $\delta>0$ there exists small $\eps>0$ and large $p$ and $c(\delta)$ such that
\begin{align}
    \left|\int_{\T^3}  (D^s\Theta)^2 Y^{k-1-m}\Theta^m\right| \leq &c(\delta) \left( \norm{D^sY}_{C^{-\frac{1}{2}-\eps}}^p +\norm{Y}_{C^{s-\frac{1}{2}+2\eps}}^p \right)\nonumber\\
    &+\delta\left(\norm{\Theta}_{H^{s+1}}^2 +\norm{D^s\Theta \Theta^{\frac{k-1}{2}}}_{L^2}^2 +E^q\right).\nonumber
\end{align}
\end{lemma}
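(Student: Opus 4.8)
The plan is to bound the integrand in absolute value and estimate $\mathcal{I}:=\int_{\T^3}(D^s\Theta)^2|\Theta|^m|Y|^{k-1-m}\,dx$, the idea being to trade the factor $(D^s\Theta)^2|\Theta|^m$ against the good quantity $\int_{\T^3}(D^s\Theta)^2\Theta^{k-1}=\norm{D^s\Theta\,\Theta^{\frac{k-1}{2}}}_{L^2}^2$, which is nonnegative because $k-1$ is even. The term $\norm{D^sY}_{C^{-\frac12-\eps}}^p$ on the right-hand side is nonnegative and need not be used; only $\norm{Y}_{C^{s-\frac12+2\eps}}$ will enter, and only through $\norm{Y}_{L^\infty}\lesssim\norm{Y}_{C^{s-\frac12+2\eps}}$, which is valid for $s>\frac12$ by \eqref{EQU: Intermediate embeddings}. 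So I would first pull out $\norm{Y}_{L^\infty}^{k-1-m}$.

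Second, I would apply H\"older's inequality to the nonnegative measure $(D^s\Theta)^2\,dx$ with conjugate exponents $\frac{k-1}{m}$ and $\frac{k-1}{k-1-m}$ (both genuinely $>1$ since $0<m<k-1$), writing $|\Theta|^m=(\Theta^{k-1})^{m/(k-1)}$, to obtain
\[
\mathcal{I}\;\lesssim\;\norm{Y}_{L^\infty}^{k-1-m}\,\norm{D^s\Theta\,\Theta^{\frac{k-1}{2}}}_{L^2}^{\frac{2m}{k-1}}\,\norm{\Theta}_{\dot{H}^s}^{\frac{2(k-1-m)}{k-1}}.
\]
Next I would interpolate $\norm{\Theta}_{\dot{H}^s}\le\norm{\Theta}_{H^s}\lesssim\norm{\Theta}_{L^2}^{1/(s+1)}\norm{\Theta}_{H^{s+1}}^{s/(s+1)}$ via \eqref{EQU: Interpolation}, and bound $\norm{\Theta}_{L^2}\lesssim\norm{\Theta}_{L^{k+1}}\lesssim E^{1/(k+1)}$, using that $k+1$ is even and $E=E_N(\vec{I}^\theta)\ge\frac{1}{k+1}\int_{\T^3}\Theta^{k+1}$. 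This recasts the bound as $A^{e_A}P^{e_P}B^{e_B}E^{e_E}$ with $A=\norm{Y}_{L^\infty}$, $P=\norm{D^s\Theta\,\Theta^{\frac{k-1}{2}}}_{L^2}^2$, $B=\norm{\Theta}_{H^{s+1}}^2$, $e_A=k-1-m>0$, $e_P=\frac{m}{k-1}$, $e_B=\frac{s(k-1-m)}{(s+1)(k-1)}$ and $e_E=\frac{2(k-1-m)}{(k+1)(s+1)(k-1)}>0$.

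The only delicate step, and the heart of the matter, is the arithmetic identity $e_P+e_B=\frac{m+s(k-1)}{(s+1)(k-1)}$, which is \emph{strictly} less than $1$ precisely because $m<k-1$ — and this holds for every $s>0$, which is why this lemma requires only $s>\frac12$ rather than $s>\frac52$. Setting $\eta:=1-e_P-e_B>0$, I would pick the integer $q=q(s,k)$ large enough that $e_E/q<\eta$, then put $p:=e_A/(\eta-e_E/q)<\infty$, so that $e_P+e_B+\frac{e_E}{q}+\frac{e_A}{p}=1$. Weighted Young's inequality with these four exponents, after the standard rescaling that produces an arbitrarily small prefactor, yields $A^{e_A}P^{e_P}B^{e_B}E^{e_E}\le\delta\big(P+B+E^q\big)+c(\delta)A^p$ for every $\delta>0$. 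Re-expanding $P$, $B$ and $A\lesssim\norm{Y}_{C^{s-\frac12+2\eps}}$, noting that all implicit constants are uniform in $N$, and adding back the free nonnegative term $\norm{D^sY}_{C^{-\frac12-\eps}}^p$, gives the claimed estimate.

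It is worth recording that the $E^q$ term is exactly what makes the last step work: without the interpolation against $E$ the exponent $e_P+e_B$ would equal $1$, leaving no room to absorb the factor $\norm{Y}_{L^\infty}$, so one would be forced to reduce an exponent and pick up an additive constant. This is precisely the reason the modified weighted measure carries the high power $-E^q$ of the Hamiltonian. Finally, the excluded cases are immediate: for $m=k-1$ the left-hand side is (a multiple of) the good term itself, and $m=0$ is handled as in the preceding lemmas by interpolating $\norm{\Theta}_{\dot H^s}$ down to $L^2$ and absorbing the surplus power into $E^q$.
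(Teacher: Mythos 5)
Your argument is correct, and it reaches the conclusion by a genuinely different initial decomposition than the paper's. The paper applies a \emph{pointwise} Young inequality to the factor $Y^{k-1-m}\Theta^m$ in the integrand, exploiting that $k-1$ is even to write
\[
\left|\int_{\T^3}(D^s\Theta)^2 Y^{k-1-m}\Theta^m\right|\;\le\; C(\delta)\int_{\T^3}(D^s\Theta)^2 Y^{k-1}\;+\;\delta\int_{\T^3}(D^s\Theta)^2\Theta^{k-1},
\]
so that the good term $\norm{D^s\Theta\,\Theta^{\frac{k-1}{2}}}_{L^2}^2$ is isolated with a coefficient $\delta$ from the outset, and only $\int(D^s\Theta)^2 Y^{k-1}$ remains to be estimated via $\norm{\Theta}_{H^s}^2\norm{Y}_{L^\infty}^{k-1}$, interpolation $H^{s+1}$--$L^2$, $\norm{\Theta}_{L^2}\lesssim E^{1/(k+1)}$, and a final three-term Young inequality. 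You instead apply H\"older in $x$ with respect to the measure $(D^s\Theta)^2\,dx$, which directly produces a fractional power of the good term together with $\norm{\Theta}_{\dot H^s}^{2(k-1-m)/(k-1)}$, and then handle all four factors $A^{e_A}P^{e_P}B^{e_B}E^{e_E}$ in a single weighted Young inequality. Your arithmetic checks out: $e_P+e_B=\frac{m+s(k-1)}{(s+1)(k-1)}<1$ exactly when $m<k-1$, which creates the slack that the $E^q$ weight fills, and you correctly observe that this holds for all $s>0$ so the constraint $s>\tfrac12$ in the lemma only comes from the embedding $C^{s-\frac12+2\eps}\hookrightarrow L^\infty$. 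Both routes rely on the same three ingredients (the $L^\infty$ bound on $Y$, the $H^{s+1}$--$L^2$ interpolation, and $\norm{\Theta}_{L^2}\lesssim E^{1/(k+1)}$) and share the key observation that the high power $E^q$ in the weight is what makes the exponent bookkeeping close; the paper's pointwise Young up front spares one factor in the final Young step and keeps the good term with coefficient $\delta$ from the start, whereas your H\"older approach is perhaps more mechanical and exposes the arithmetic inequality $e_P+e_B<1$ explicitly. Either way the lemma follows.
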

\begin{proof}
Using Young's inequality,
\begin{align*}
    \left|\int_{\T^3}  (D^s\Theta)^2 Y^{k-1-m}\Theta^m \right| \leq C(\delta)\int_{\T^3}  (D^s\Theta)^2 Y^{k-1}+ \delta\int_{\T^3}  (D^s\Theta)^2 \Theta^{k-1}.
\end{align*}
It remains to estimate the first term on the right hand side of the above inequality. Using H\"older's inequality, \eqref{EQU: Interpolation} and the fact that $s>\frac{1}{2}$ we have,
\begin{align*}
    \left|\int_{\T^3}  (D^s\Theta)^2 Y^{k-1}\right| & \lesssim \norm{\Theta}_{H^s}^2\norm{Y^{k-1}}_{L^\infty}\\
    & \lesssim \norm{\Theta}_{H^{s+1}}^{2\frac{s}{s+1}}\norm{\Theta}_{L^2}^{2\frac{1}{s+1}}\norm{Y}^{k-1}_{L^\infty}\\
    &\lesssim \norm{\Theta}_{H^{s+1}}^{\frac{2s}{s+1}}\norm{\Theta}_{L^{k+1}}^{\frac{2}{s+1}}\norm{Y}^{k-1}_{C^{s-\frac{1}{2}-\eps}}\\
    &\lesssim \norm{\Theta}_{H^{s+1}}^{\frac{2s}{s+1}}E^{\frac{2}{(s+1)(k+1)}}\norm{Y}^{k-1}_{C^{s-\frac{1}{2}-\eps}}.
\end{align*}
For $q$ large enough,
\begin{equation}
    \frac{s}{s+1}+\frac{2}{q(s+1)(k+1)}<1 \nonumber
\end{equation}
and so the desired inequality follows from Young's inequality.
\end{proof}

\begin{lemma}
(Remaining Terms) Let $s>\frac{1}{2}$. Then for sufficiently small $\delta>0$ there exists small $\eps>0$ and large $c(\delta)$ such that
\begin{align}
\int_{\mathbb{T}^3} (Y+\Theta)^{k+1}\leq C(\delta)\norm{Y}_{C^{s-\frac{1}{2}-\eps}}+\delta\int_{\T^3}  \Theta^{k+1}.\nonumber
\end{align}
\end{lemma}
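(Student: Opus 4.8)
The plan is to expand $(Y+\Theta)^{k+1}$ by the binomial theorem and estimate each term separately; since no derivative $D^s$ or paraproduct enters, this is considerably softer than the preceding lemmas. Write
\[
\int_{\T^3}(Y+\Theta)^{k+1}=\sum_{j=0}^{k+1}\binom{k+1}{j}\int_{\T^3}Y^{k+1-j}\Theta^{j}.
\]
Because $k$ is odd, $k+1$ is even, so $\Theta^{k+1}\ge 0$ pointwise and the $j=k+1$ term is exactly $\int_{\T^3}\Theta^{k+1}$, a coercive quantity which we keep on the right-hand side. For $j=0$, choosing $\eps$ small enough that $s-\tfrac12-\eps>0$, the embedding $C^{s-\frac12-\eps}(\T^3)\hookrightarrow L^\infty(\T^3)$ gives $\big|\int_{\T^3}Y^{k+1}\big|\le|\T^3|\,\norm{Y}_{L^\infty}^{k+1}\lesssim\norm{Y}_{C^{s-\frac12-\eps}}^{k+1}$.

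For the cross terms $1\le j\le k$ I would estimate $\big|\int_{\T^3}Y^{k+1-j}\Theta^{j}\big|\le\norm{Y}_{L^\infty}^{k+1-j}\int_{\T^3}|\Theta|^{j}$, then apply H\"older's inequality on the finite-measure space $\T^3$ with exponents $\tfrac{k+1}{j}$ and $\tfrac{k+1}{k+1-j}$ to obtain $\int_{\T^3}|\Theta|^{j}\le|\T^3|^{1-\frac{j}{k+1}}\big(\int_{\T^3}\Theta^{k+1}\big)^{\frac{j}{k+1}}$ (again using that $k+1$ is even). Combining with the embedding bound on $\norm{Y}_{L^\infty}$, each cross term is $\lesssim\norm{Y}_{C^{s-\frac12-\eps}}^{k+1-j}\big(\int_{\T^3}\Theta^{k+1}\big)^{\frac{j}{k+1}}$, and since $\frac{j}{k+1}<1$, Young's inequality with conjugate exponents $\tfrac{k+1}{k+1-j}$ and $\tfrac{k+1}{j}$ yields, for every $\delta>0$,
\[
\norm{Y}_{C^{s-\frac12-\eps}}^{k+1-j}\Big(\int_{\T^3}\Theta^{k+1}\Big)^{\frac{j}{k+1}}\le c(\delta)\norm{Y}_{C^{s-\frac12-\eps}}^{k+1}+\delta\int_{\T^3}\Theta^{k+1}.
\]
Summing the finitely many terms $0\le j\le k+1$ and relabeling $\delta$ gives the asserted estimate; read as a lower bound it states $\int_{\T^3}(Y+\Theta)^{k+1}\ge(1-\delta)\int_{\T^3}\Theta^{k+1}-C(\delta)\norm{Y}_{C^{s-\frac12-\eps}}^{k+1}$, which is the form actually used when bounding \eqref{EQU: Need to bound below 2} from below.

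There is no substantive obstacle here. The only points to verify are that $\eps$ may be taken small enough that $s-\tfrac12-\eps>0$ (immediate from $s>\tfrac12$) and that the Young exponents $\tfrac{k+1}{k+1-j}$, $\tfrac{k+1}{j}$ are admissible (immediate since $1\le j\le k$). The role of this lemma is simply to replace the low-order polynomial contribution $\tfrac1{k+1}\int_{\T^3}(Y+\Theta)^{k+1}$ inside $R_{k,s,N}(Y+\Theta)$ by the coercive quantity $\int_{\T^3}\Theta^{k+1}$, up to a small error absorbed by the positive terms and a quantity $\norm{Y}_{C^{s-\frac12-\eps}}^{k+1}$ that is deterministic in $\Theta=I_1^\theta$ and controlled uniformly in $N$ and $\theta$ via Lemma \ref{Lemma: prob control on Y}.
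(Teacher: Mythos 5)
Your proof is correct and follows essentially the same route as the paper: expand $(Y+\Theta)^{k+1}$ (the paper invokes "Young's inequality" directly, which amounts to the same binomial/Young argument you spell out), bound $Y$ in $L^\infty$ via $C^{s-\frac12-\eps}\hookrightarrow L^\infty$ for $s>\tfrac12$, and control the cross terms by H\"older/Young against $\int\Theta^{k+1}$. Your more detailed write-up is faithful to the paper's one-line proof and additionally catches a real imprecision in the statement: the upper bound as printed, with $\delta$ multiplying $\int\Theta^{k+1}$ and taken small, cannot hold (set $Y=0$), and the achievable coefficient is $1+\delta$; you correctly identify that the lower bound $\int(Y+\Theta)^{k+1}\ge(1-\delta)\int\Theta^{k+1}-C(\delta)\norm{Y}_{C^{s-\frac12-\eps}}^{k+1}$ is what is actually used in absorbing terms in the variational estimate, and you also implicitly fix the missing exponent $k+1$ on the $\norm{Y}$ term in the lemma statement. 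One small caveat: when you write "relabeling $\delta$ gives the asserted estimate," that step would not recover a coefficient $\delta<1$ on $\int\Theta^{k+1}$; you should simply state the $(1+\delta)$ upper bound (or equivalently the $(1-\delta)$ lower bound) and note that this is the form used downstream.
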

\begin{proof}
Using Young's inequality and \eqref{EQU: Intermediate embeddings} we have,
\begin{align}
\int_{\mathbb{T}^3} (Y+\Theta)^{k+1}&\leq C(\delta)\int_{\mathbb{T}^3} Y^{k+1}+\delta\int_{\mathbb{T}^3} \Theta^{k+1}\nonumber\\
&\leq C(\delta)\norm{Y}_{C^{s-\frac{1}{2}-\eps}}^{k+1}+\delta\int_{\mathbb{T}^3} \Theta^{k+1},
\end{align}
which completes the proof.
\end{proof}

\section{Dispersionless case}\label{sec: dispersionless}
In this section, we show that the dispersion is essential to the quasi-invariance result, by showing that it fails if the Laplacian term is absent from the system \eqref{NLW}. More precisely, we show that there exists a dense sequence of times $t$ such that the distribution of the flow of the dispersionless model \eqref{eqn: ODE} at time $t$ is not absolutely continuous with respect to the distribution of the initial data. This answers a question posed in the introductions to \cite{OT}, \cite{GOTW}. 

The proof uses an idea developed by Oh, Tzvetkov and the third author in \cite{OST} to prove the same result for a Schr\"odinger-type equation, using almost-sure properties of the series \eqref{series}. Unlike in \cite{OST} situation, no explicit solution of the ODE \eqref{eqn: ODE} is available, so we instead use the invariance of the Hamiltonian to derive a contradiction.

The dispersionless system is 
\begin{equation}\label{eqn: ODE}
\begin{cases}
\partial_t u=v,\\
\partial_t v= -u^k
\end{cases}.
\end{equation}
We take the initial data
\[(u(0,x),v(0,x)):=(u^\omega,v^\omega),\]
where $u^\omega$, $v^\omega$ are the random series already given in \eqref{series}
\begin{equation}\label{eqn: uv-rep}
\begin{split}
u^\omega (x)&=\sum_{n\in \mathbb{Z}^3}\frac{g_n}{\langle n\rangle^{s+1}}e^{in\cdot x},\\
v^\omega (x)&=\sum_{n\in \mathbb{T}^3}\frac{h_n}{\langle n\rangle^s} e^{in\cdot x}.
\end{split}
\end{equation}

Our main tool to derive Theorem \ref{thm: dispersionless}, the law of the iterated logarithm, gives a fine description of the regularity of the process at a \emph{fixed point} $x\in \mathbb{T}^d$. The key point is that the result holds almost surely, so it must also hold on the support of any measure that is mutually absolutely continuous with respect to $\mu_s$.

The analog for Gaussian fields indexed by $\mathbb{R}^3$ the following result was proved in \cite[Theorem 1.3]{BJR}. Their result is more general and covers non-stationary Gaussian fields whose covariance is defined by a pseudodifferential operator. The proof uses a wavelet decomposition of the process. Given the explicit representations \eqref{series}, they can also be derived more directly using more classical tools. This was done in the case of one dimensional Gaussian fields in \cite{OST}. We do not reproduce the details of the proof here.
\begin{proposition}
For $x\in \mathbb{T}^3$, let $v(x)$ be given by the random series defined in \eqref{eqn: uv-rep}. We have the following:
\begin{enumerate}
\item For $s\in (\frac{1}{2},\frac{3}{2})$, 
\[\limsup_{|h|\rightarrow 0} \frac{v(x+h)-v(x)}{\sqrt{c_s |h|^{2s-1}\log\log \frac{1}{|h|}}}=1\]
almost surely.
\item For $s=\frac{3}{2}$,
\[\limsup_{|h|\rightarrow 0} \frac{v(x+h)-v(x)}{\sqrt{c_s |h|^2\log \frac{1}{|h|}\log\log\log  \frac{1}{|h|}}}=1.\]
\item When $s>\frac{3}{2}$, let $r=\lfloor s-\frac{1}{2}\rfloor$, then $\partial_x^r v$ exists and is continuous, and satisfies the above LILs with $v$ replaced by $\partial_x^r v$ and $s$ replaced by $s-r$.
\end{enumerate}
\end{proposition}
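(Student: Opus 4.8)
The plan is to deduce the three statements from a law of the iterated logarithm for the stationary, mean-zero Gaussian process $v$, which by stationarity we analyze near $h=0$, and then to reduce the range $s>3/2$ to the base case by differentiating the random series term by term. The first step is the precise small-scale asymptotic of the increment variance: a direct analysis of the Fourier series --- splitting the sum at $|n|\sim|h|^{-1}$, replacing $\sin^2(n\cdot h/2)$ by $|n\cdot h|^2/4$ on the low-frequency block up to lower-order terms, bounding $\sin^2$ by $1$ on the high-frequency block, and comparing each with the corresponding integral --- gives $\sigma^2(h):=\mathbb{E}|v(x+h)-v(x)|^2=\sum_n 4\sin^2(n\cdot h/2)\langle n\rangle^{-2s}=c_s|h|^{2s-1}(1+o(1))$ for $s\in(1/2,3/2)$, while at $s=3/2$ the same computation produces the logarithmic correction $\sigma^2(h)=c_s|h|^2\log(1/|h|)(1+o(1))$. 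Together with Gaussianity this bound also furnishes, via Dudley's entropy bound (or Kolmogorov's criterion), a uniform modulus of continuity for $v$, which one needs in order to pass from all small increments to a countable net of scales and directions.

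For the upper bound $\limsup\le 1$, I would fix $\theta\in(0,1)$ and work along the geometric scales $h_k=\theta^k$. For $|h|\in[h_{k+1},h_k]$ the increment $v(x+h)-v(x)$ differs from $v(x+h_k e)-v(x)$, $e=h/|h|$, by at most the modulus of continuity of $v$ over a ball of radius $\sim h_k$, which is $o\big(\sigma(h_k)\sqrt{\log\log(1/h_k)}\big)$; covering the directions $e$ by a net of polynomial-in-$k$ cardinality and applying the Gaussian tail bound $\mathbb{P}(|Z|>\lambda)\le 2e^{-\lambda^2/2}$ with $\lambda^2=(2+\varepsilon)\log\log(1/h_k)$ makes the relevant events summable in $k$ (note $\log\log(1/h_k)\sim\log k$), so the first Borel--Cantelli lemma gives the bound.

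The hard part is the lower bound $\limsup\ge 1$. Here I would pass to the annular increments $Z_k:=v(x+h_k e)-v(x+h_{k+1}e)$ along a fixed direction, which have variance $\sigma^2\big((h_k-h_{k+1})e\big)=c_sh_k^{2s-1}(1+o(1))$ and whose pairwise correlations, estimated from the covariance kernel and the separation of scales $\sigma^2(h_j)\gg\sigma^2(h_k)$, decay geometrically in $|j-k|$. One then applies the second Borel--Cantelli lemma in its Kochen--Stone form (to absorb the mild dependence) to the one-sided events $\{Z_k>(1-\varepsilon)\sqrt{c_sh_k^{2s-1}\log\log(1/h_k)}\}$: the one-sided Gaussian lower tail $\mathbb{P}(Z>\lambda)\gtrsim\lambda^{-1}e^{-\lambda^2/2}$ with $\lambda^2$ slightly below the critical level forces $\sum_k\mathbb{P}(\cdot)=\infty$, while the geometric decay of correlations keeps the second-moment ratio bounded; since $v(x+h_{k+1}e)-v(x)$ is of lower order by the upper bound, this yields $v(x+h_k e)-v(x)\ge(1-2\varepsilon)\sqrt{c_sh_k^{2s-1}\log\log(1/h_k)}$ for infinitely many $k$. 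The scales must in fact be chosen as $h_k=\theta^{n_k}$ with the growth of $n_k$ tuned so that the probabilities still diverge while consecutive correlations become negligible, and this balancing is the one genuinely technical point; the $s=3/2$ case runs the same way with the normalization replaced by $\sqrt{c_s|h|^2\log(1/|h|)\log\log\log(1/|h|)}$.

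Finally, for $s>3/2$ I would set $r=\lfloor s-1/2\rfloor$ and differentiate: $\partial_x^rv=\sum_n(in)^rh_n\langle n\rangle^{-s}e^{in\cdot x}$ has spectral weight $\sim\langle n\rangle^{-2(s-r)}$ with $s-r\in(1/2,3/2]$, and since $s-r>1/2$ the differentiated series converges locally uniformly almost surely, which legitimizes the term-by-term differentiation and identifies $\partial_x^rv$ with the classical $r$-th derivative; applying the three preceding steps to $\partial_x^rv$ with $s$ replaced by $s-r$ completes the proof. One could instead invoke the wavelet-decomposition argument of \cite[Theorem 1.3]{BJR}, which handles the general pseudodifferential covariance directly and replaces the correlation estimates by near-independence of wavelet coefficients; the route above is the classical one used in \cite{OST}. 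I expect the main obstacle to be, as indicated, the lower bound: extracting a subsequence of increments whose correlations decay fast enough for a dependent Borel--Cantelli argument while the associated probabilities still diverge, and getting the normalizing constant exactly rather than up to a multiplicative constant.
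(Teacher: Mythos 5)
The paper itself does not supply a proof of this proposition: it cites the wavelet-decomposition argument of \cite[Theorem 1.3]{BJR} and points to the classical one-dimensional treatment in \cite{OST}, then explicitly declines to reproduce the details. Your proposal fills in exactly the classical route the paper gestures at: sharp increment-variance asymptotics, a first Borel--Cantelli upper bound along geometric scales with a chaining/modulus-of-continuity correction, a Kochen--Stone second Borel--Cantelli lower bound for annular increments along a thinned geometric subsequence to decorrelate, and reduction of $s>3/2$ to the base case by term-by-term differentiation of the series. That skeleton is the right one, and you correctly single out the lower bound (tuning the thinning so that correlations decay fast enough while the one-sided tail probabilities still diverge, to get the constant exactly $1$) as the genuinely technical step.

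One point you should resolve before writing out the details: the variance asymptotic $\sigma^2(h)=c_s|h|^{2s-1}(1+o(1))$ on the window $s\in(1/2,3/2)$ is the \emph{one-dimensional} scaling, and this is already how the proposition is phrased despite the formal ``$x\in\mathbb{T}^3$''. If you actually carry out the $\mathbb{Z}^3$ Fourier computation you describe --- split at $|n|\sim|h|^{-1}$, replace $\sin^2(n\cdot h/2)$ by $|n\cdot h|^2/4$ on the low block and bound by $1$ on the high block, compare with integrals --- the low block gives $|h|^2\sum_{|n|\lesssim|h|^{-1}}|n|^{2-2s}\sim|h|^{2s-3}$ and the tail gives $\sum_{|n|\gtrsim|h|^{-1}}\langle n\rangle^{-2s}\sim|h|^{2s-3}$, and you need $s>3/2$ merely for the series defining $v$ to converge pointwise; the natural three-dimensional window is $s\in(3/2,5/2)$ with exponent $2s-3$. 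So the statement should either be read as one-dimensional (as in \cite{OST}), or as applying to $v$ restricted to a line through $x$ with the effective regularity shifted accordingly. Your method carries over once the exponents are stated consistently, but as written the ``direct analysis of the Fourier series'' you invoke does not produce the normalization you then use, and the discrepancy is worth flagging rather than silently inheriting from the proposition statement.
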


The Hamiltonian associated to the ODE is
\begin{equation}
H(u,v)=\frac{1}{2}v^2(x)+\frac{1}{k+1}u^{k+1}(x).
\end{equation}
This quantity is conserved along the flow of the ODE \eqref{eqn: ODE}:
\begin{equation}\label{eqn: conserved}
H(u(0),v(0))=H(u(t),v(t))
\end{equation}
for each $t\ge 0$.

\begin{proof}[Proof of Theorem \ref{thm: dispersionless}]
Let $h_n$, $|h_n|\downarrow 0$ be a sequence along which the $\limsup$ in the LIL for $v(t,x)$ is attained.  Differentiating both sides of this equation $r$ times, we find:
\begin{equation}\label{eqn: equality}
v(0,x)\partial_x^rv(0,x)+u(0,x)^k\partial_x^r u(0,x)+\text{l.o.t.}=v(t,x)\partial_x^rv(t,x)+u(t,x)^k\partial_x^r u(t,x)+\text{l.o.t.},
\end{equation}
where ``l.o.t.'' denotes lower order terms which are more regular that $\partial_x^r v(0,x)$, $\partial_x^r v(t,x)$. The equality \eqref{eqn: equality} also holds with $x$ shifted by $h_n$ on both sides, so that after subtraction and division by 
\[\sqrt{c_s |h_n|^{2s-1}\log\log \frac{1}{|h_n|}}\]
in case $s-r\neq 3/2$ and
\[\sqrt{c_s |h_n|^2\log \frac{1}{|h_n|}\log\log \frac{1}{|h_n|}}\]
if $s-r=\frac{3}{2}$, we find, for each $x\in \mathbb{T}^3$ and $t\ge 0$:
\begin{equation}
v(0,x)\ge v(t,x)
\end{equation}
almost surely.
Exchanging the roles of $v(0,x)$ and $v(t,x)$, we obtain
\begin{equation}\label{eqn: static}
v^\omega(x)=v(0,x)=v(t,x)
\end{equation}
with probability 1.
Using \eqref{eqn: conserved} again, we now find
\[\frac{1}{k+1}u^{k+1}(t,x)=H(u(0),v(0))-\frac{1}{2}v(t,x)=\frac{1}{k+1}u^{k+1}(0,x),\]
so
\[u(t,x)=\pm u(0,x),\]
almost surely. Let $H_0=H(u(0),v(0))$. The equality
\[(u(t,x),v(t,x))=(\pm u(0,x),v(0,x))\]
implies that $t$ is equal to the period $T$ of the system \eqref{eqn: ODE} on the energy surface $H(u,v)=H_0$:
\begin{equation}\label{eqn: T}
T:=\frac{4\cdot (2H_0)^{\frac{1}{2}-\frac{1}{k+1}} }{(k+1)^{1/(k+1)}} \int_0^1\frac{1}{(1-y^2)^{1/(k+1)}}\mathrm{d}y,
\end{equation}
or equal to an integer multiple of $T$ plus or minus
\begin{equation}\label{eqn: T-diff}
\begin{split}
\Delta&:=\frac{2}{(k+1)^{1/(k+1)}}\int_{|v(0,x)|}^{\sqrt{2H_0}}\frac{1}{(H_0-\frac{y^2}{2})^{\frac{1}{k+1}}}\,\mathrm{d}y\\
&=\frac{2\cdot (2H_0)^{\frac{1}{2}-\frac{1}{k+1}}}{(k+1)^{1/(k+1)}}\int_{\frac{|v(0,x)|}{\sqrt{2H_0}}}^1 \frac{1}{(1-y^2)^{\frac{1}{k+1}}}\,\mathrm{d}y.
\end{split}
\end{equation}
Both quantities \eqref{eqn: T} and \eqref{eqn: T-diff} have continuous distributions if $k\neq 1$, so 
\[\mathbb{P}(t= n\cdot T + m\cdot \Delta \text{ for some } n\in \mathbb{Z}_+, m\in \{-1,0,1\})=0,\]
so the assumption of absolute continuity between the distributions is untenable.
\end{proof}

\textbf{Acknowledgements} The authors would like to thank Tadahiro Oh for facilitating our collaboration, and explaining the argument in Appendix \ref{sec: critical}. P.S. is supported by NSF Grant DMS-1811093. W. J. T. was supported by The Maxwell Institute Graduate School in Analysis and its Applications, a Centre for Doctoral Training funded by the UK Engineering and Physical Sciences Research Council (grant EP/L016508/01), the Scottish Funding Council, Heriot-Watt University and the University of Edinburgh.

\appendix
\section{Bound for the $H^{\sigma}$ in the critical case}\label{sec: critical}
In this section, we derive the growth bound \eqref{eqn: bound-2} in the case $d=3$, $k=5$. This is the energy-critical nonlinearity. In this case, it is known solutions exist globally (see \cite[Chapter 5]{tao}, \cite[Chapter V]{sogge}) and scatter. \marginpar{references}

We begin by establishing a bound for the solutions of the equation on $\mathbb{R}^3$ instead of $\mathbb{T}^3$. We have the global space time bound
\begin{equation}\label{eqn: L5}
\|u\|_{L^5_t L^{10}_x}\le C(H_0),
\end{equation}
where $H_0$ is the initial energy. See \cite[Chapter V, Proposition 3.1]{sogge} for a global bound in $L^4_tL^{12}_x$. See also \cite[Theorem 1.1]{tao-st} for a quantitive estimate in terms of $H_0$. The estimate \eqref{eqn: L5} follows from this by the standard Strichartz estimates for the wave equation (\cite[Chapter IV, Corollary 1.2]{sogge} with $q=5$, $r=2$, $\tilde{q}'=1$, $\tilde{r}'=2$).

Let $\eta>0$, and divide $\mathbb{R}_+$ into a finite number of interval $I_j$, $j=1,\ldots, J$ such that
\[\|u\|_{L^5_t(I_j;L^{10}_x)}\le \eta.\]
Denote by $t_j^-$ the left endpoint of the interval $I_j$, so that
\begin{align*}
  I_j&=[t_j^-,t_j^+),\\
  t_{j+1}^-&=t_j^+.
\end{align*}
We now recall the Strichartz estimate for the linear wave equation. A pair of exponents $(q,r)$, $q\ge 2$, $2\le r<\infty$, is called $s$-admissible (in dimension 3) if
\begin{align*}
\frac{1}{q}+\frac{1}{r}&\le \frac{1}{2},\\  
\frac{1}{q}+\frac{3}{r}&=\frac{3}{2}-s.
\end{align*}
Let $s>0$. If $(q,r)$ and $(\tilde{q},\tilde{r})$ are $s$--admissible (respectively, $1-s$--admissible pairs, then if $(\tilde{u},\tilde{v})$ solves the linear wave equation with right hand side given by $F$, we have \cite[Theorem 2.6]{tao}
\[\|(\tilde{u},\tilde{v})\|_{L_t^\infty(I;\vec{H}^s(\mathbb{R}^3))}+\|\tilde{u}\|_{L_t^q(I;L^r_x)}\lesssim\|(\tilde{u}_0,\tilde{v}_0)\|_{\vec{H}^s(\mathbb{R}^3)}+\|F\|_{L_t^{\tilde{q}'}(I;L^{\tilde{r}'}_x)}.\]
We choose $p=5$, $r=10$, $\tilde{q}'=1$, $\tilde{r}'=2$. We let choose $\tilde{u}=\nabla^{\sigma-1} u$, we obtain
\[\|(u,v)\|_{C_t(I;\mathcal{H}^{\sigma})}+\|u\|_{L^5_t(I; W^{\sigma-1,10}_x)}\le C\|(u_0,v_0)\|_{\mathcal{H}^{\sigma}}+C\|\nabla^{\sigma-1}(|u|^4u)\|_{L^1_t(I;L^2_x)}.\]
Applying the Leibniz rule \eqref{EQU: Fracational Leibniz rule cor} with $s=\sigma-1$, $p_0=10$, $p_1=5/4$ and $p=q=2$ to the final term, we obtain, on each interval $I_j$, the estimate
\begin{align*}
\|u\|_{C_t(I_j;H^{\sigma})}+\|u\|_{L^5_t(I_j; W^{\sigma-1,10}_x)} &\le C \big(\| (u, v)(t_j^-)\|_{\mathcal{H}^\sigma} + \| u \|_{L^5_t(I_j; W^{\sigma-1,
10}_x)} \| u \|_{L^5_t(I_j;L^{10}_x)}^4\big)\\
&\le C \big(\| (u, v)(t_j^-)\|_{\mathcal{H}^\sigma} + \eta^4 \| u \|_{L^5_t(I_j; W^{\sigma-1,
10}_x)} \big).
\end{align*}
For $\eta$ small enough, we obtain
\[\|(u,v)\|_{C_t(I_j;\mathcal{H}^{\sigma})}+\|u\|_{L^5(I_j;W^{\sigma-1,10}_x)}\le C'\|(u,v)(t_j^-)\|_{\mathcal{H}^{\vec{\sigma}}(\mathbb{R}^3)}.\]
Repeating this over each of the $J$ intervals, we obtain the bound
\[\|(u,v)\|_{C_t(\mathbb{R}_+;\mathcal{H}^{\sigma})}+\|u\|_{L^5(\mathbb{R}_+;W^{\sigma-1,10}_x(\mathbb{R}^3))}\le (C')^J.\]
The same argument applied to the negative time direction finally gives
\begin{equation}\label{eqn: L5}
\|(u,v)\|_{C_t(\mathbb{R};\mathcal{H}^{\sigma})}+\|u\|_{L^5(\mathbb{R};W^{\sigma-1,10}_x(\mathbb{R}^3))}\le (C')^J.
\end{equation}

To transfer the estimate \eqref{eqn: L5} to $\mathbb{T}^3\approx [-\frac{1}{2},\frac{1}{2})^3$, we consider initial data $(u_0,v_0)\in \mathcal{H}^{\vec{\sigma}}(\mathbb{T}^3)$. We extend $(u_0,v_0)$ to a periodic function $(\bar{u}_0,\bar{v}_0)$ on $\mathbb{R}^3$. Let $\eta\in C_c^\infty(\mathbb{R}^3)$ such that $\eta=1$ on $[-1,1]^3$, and define
\[\eta_T(x)=\eta(x/\langle T\rangle).\]
Here $\langle \cdot \rangle = (1+|\cdot|^2)^{1/2}$.

Consider the initial data problem on $\mathbb{R}^3$:
\begin{equation}
  \begin{cases}
\partial_t w-\Delta w+|w|^4w=0\\
(w,\partial_t w)|_{t=0}:=(\eta_T \bar{u}_0,\eta_T \bar{v}_0).
\end{cases}
\end{equation}
By \eqref{eqn: L5}, we have the estimate
\[\|(w,\partial_t w)\|_{C_t([0,T],\mathcal{H}^\sigma)}\le C(\|(w,\partial_t w)|_{t=0}\|_{\mathcal{H}^{\sigma}(\mathbb{R})})\]
Then, by \cite[Eqn. (3.7)]{OP}, we have, for $f\in H^s(\mathbb{T}^3)$
\begin{equation}
(1/C)\langle T\rangle^{3/2}\|f\|_{H^s(\mathbb{T}^3)}\le \|\eta_Tf\|_{H^s(\mathbb{R}^3)}\le  C\langle T\rangle^{3/2}\|f\|_{H^s(\mathbb{T}^3)}.
\end{equation}
It follows that
\[\|(w,\partial_t w)\|_{C_t([0,T],\mathcal{H}^\sigma)}\lesssim T^{3/2}.\]
By finite speed of propagation, for $|t|\le T$, the restriction of $(w(t),\partial_t w(t))$, to $[-\frac{1}{2},\frac{1}{2})^{3}\approx \mathbb{T}^3$ coincides with the solution $(u(t),v(t))$ of the quintic nonlinear wave equation on $\mathbb{T}^3$ with initial data $(u_0,v_0)$.
\newpage

\end{document}